\newcommand{\mb}{\mathbf}
\newcommand{\mc}{\mathcal}
\renewcommand{\Re}{\mathrm{Re}\,}
\newcommand{\rg}{\mathrm{rg}\,}
\newcommand{\N}{\mathbb{N}}
\newcommand{\R}{\mathbb{R}}
\newcommand{\C}{\mathbb{C}}
\newcommand{\B}{\mathbb{B}}
\DeclareMathOperator{\range}{rg}
\DeclareMathOperator{\rank}{rank}
\DeclareMathOperator{\tr}{tr}
\newcommand{\la}{\lambda}
\renewcommand{\r}{\rho}
\renewcommand{\d}{\delta}
\renewcommand{\t}{\tau}
\newtheorem{lemma}{Lemma}[section]
\newtheorem{theorem}[lemma]{Theorem}
\newtheorem{corollary}[lemma]{Corollary}
\newtheorem{proposition}[lemma]{Proposition}
\theoremstyle{remark}
\newtheorem{remark}[lemma]{Remark}
\theoremstyle{definition}
\newtheorem{definition}[lemma]{Definition}
\numberwithin{equation}{section}
\title[]{Singularity formation for the  higher dimensional Skyrme model in the strong field limit}
\author{Po-Ning Chen}
\address{University of California, Riverside, 900 University Ave, Riverside, CA 92521}
\email{poningc@ucr.edu}
\author{Michael McNulty}
\address{Michigan State University, 619 Red Cedar Road, East Lansing, MI 48824}
\email{mcnult50@msu.edu}
\author{Birgit Sch\"orkhuber}
\address{Universit\"at Innsbruck, Institut f\"ur Mathematik, Technikerstrasse 13, 6020 Innsbruck, Austria}
\email{Birgit.Schoerkhuber@uibk.ac.at}
\thanks{P.C. is supported by the Simons Foundation collaboration award \#584785.}
\begin{document}
\begin{abstract}
This paper concerns the formation of singularities in the classical $(5+1)$-dimensional, co-rotational Skyrme model. While it is well established that blowup is excluded in $(3+1)$-dimensions, nothing appears to be known in the higher dimensional case. 
We prove that the model, in the so-called strong field limit, admits an explicit self-similar solution which is asymptotically stable within backwards light cones. From a technical point of view, the main obstacle to this result is the presence of derivative nonlinearities in the corresponding evolution equation. These introduce first order terms in the linearized flow which render standard techniques useless. We demonstrate how this problem can be bypassed by using structural properties of the Skyrme model. 
\end{abstract} 

\maketitle

\section{Introduction}\label{Introduction}

In the early 1960s, physicist Tony Skyrme established his namesake model in nuclear physics \cite{S61a,S61b,S62} by introducing a higher-order correction term to the previously well-established nonlinear sigma-model for pions \cite{GL60}.  A natural extension of Skyrme's model for spatial dimensions $d \geq 3$, and maps $\Psi$ from Minkowski space $\mathbb R^{1+d}$ into the $d$-sphere $\mathbb S^d$, is described by the action functional \footnote{The Einstein summation convention of implicitly summing over repeated lower and upper indices is in effect.}
\begin{equation}
			\mc S_{Sky}[\Psi]=\alpha \mc S_{WM}[\Psi]+\frac{\beta}{4}\int_{\R^{1+d}}\Big(\big(\eta^{\mu\nu}(\Psi^*h)_{\mu\nu}\big)^2-(\Psi^*h)_{\mu\nu}(\Psi^*h)^{\mu\nu}\Big)d\eta \label{skyrme action}
		\end{equation}
where $\alpha, \beta \geq 0$, $\eta=\text{diag}(-1,1,\dots,1)$ denotes the Minkowski metric, $h$ is the standard round metric on $\mathbb S^d$, $(\Psi^*h)_{\mu\nu}=h_{ab}(\Psi)\partial_\mu \Psi^a\partial_\nu \Psi^b$ for $\mu,\nu=0,\dots,d$ and $a,b=1,\dots,d$, and 
\begin{equation}\label{wavemaps action}
 \mc S_{WM}[\Psi]=\frac{1}{2}\int_{\R^{1+d}}\eta^{\mu\nu}(\Psi^*h)_{\mu\nu}d\eta
\end{equation}
is the classical wave maps action which describes the nonlinear sigma-model. From a mathematical point of view,  geometric nonlinear field theories, such as those described by \eqref{skyrme action}, provide a rich source of challenging problems as the corresponding Euler-Lagrange equations entail highly non-trivial dynamical behavior.

We restrict our attention to so-called \textit{co-rotational} maps. These are maps $\Psi$ which, when expressed in spherical coordinates on its domain and co-domain, take the form
	\[
			\Psi(t,r,\omega)=\big(\psi(t,r),\omega\big).
	\]  
for some function $\psi:\R \times[0,\infty)\to \R$ and $\omega \in \mathbb S^{d-1}$. For such maps, the Euler-Lagrange equations for \eqref{skyrme action} yield a single radial \textit{quasilinear} wave equation
		\begin{align}
		\begin{split}
			\Big(\alpha&+\frac{\beta(d-1)\sin^2(\psi)}{r^2}\Big)\big(\partial_t^2\psi-\partial_r^2\psi\big)-\frac{d-1}{r}\Big(\alpha+\frac{\beta(d-3)\sin^2(\psi)}{r^2}\Big)\partial_r\psi
			\\
			&+\frac{(d-1)\sin(2\psi)}{2r^2}\bigg(\alpha+\beta\Big(\big(\partial_t\psi\big)^2-\big(\partial_r\psi\big)^2+\frac{(d-2)\sin^2(\psi)}{r^2}\Big)\bigg)=0. \label{skyrme eom}
		\end{split}
		\end{align}
We refer the reader to Appendix \ref{Derivation of the Equation} for the details of its derivation. 

By now, much is known for Equation \eqref{skyrme eom} in the case $d=3$, where the  model is famously known for admitting a soliton solution - the \textit{Skyrmion} - the existence of which has been proved in \cite{KL83,MT91}.  Its linear stability within the co-rotational class was established in \cite{CDSS16}, however, its full nonlinear asymptotic stability remains an open problem. Beyond that, there are several results addressing the Cauchy problem for Equation \eqref{skyrme eom}. In particular, global regularity for large data was established in \cite{GG18} and \cite{L21}. Global existence and scattering for small data in critical Sobolev-Besov spaces was established in \cite{GNR11}. For a comprehensive overview, we refer the reader to the monograph \cite{GG16}. To the best of the authors' knowledge, however, the case $d\geq 4$ appears entirely unexplored.
 
\subsection{The Skyrme model in the strong field limit} 
It is well-known that in the limiting case of Equation \eqref{skyrme eom} with $\beta = 0$, singularities can form in finite time in any dimension $d\geq 2$. More precisely, setting $\beta=0$ reduces Equation \eqref{skyrme eom} to the well-known wave maps equation
	\begin{equation}
		\partial_t^2\psi - \partial_r ^2\psi - \frac{d-1}{r} \partial_r \psi + \frac{(d-1)\sin(2\psi)}{2r^2} = 0\label{wave maps eom}
	\end{equation}
which has the explicit solution
\begin{equation}  \psi^{T}_{WM}(t,r) = 2\arctan\Big(\frac{r}{\sqrt{d-2}(T-t)}\Big), \label{wave maps soln}\end{equation}
for $d \geq 3$ (in the two dimensional case, blowup is more difficult to detect), see also Section \ref{Sec:related_results}. For $d=3$, adding the second term in \eqref{skyrme action} to the wave maps action prevents finite time blowup and allows for the existence of a nontrivial static solution. It appears unclear, however, whether or not Skyrme's `fix' to the wave maps action actually continues to prevent singularities from forming in higher space dimensions. 

Notice that the wave maps part of the action functional \eqref{skyrme action} is quadratic in the derivatives of $\Psi$ whereas the terms attached to $\beta$, which will be referred to as the \textit{strong field} part, are quartic. In particular, one might expect that for solutions with large gradients, the wave maps part becomes less relevant and dynamics are eventually governed by the Euler-Lagrange equation corresponding to $\alpha = 0$ which reads
\begin{align}
		\begin{split}
			\frac{\sin^2(\psi)}{r^2}\Big(\partial_t^2\psi & -\partial_r^2\psi-\frac{d-3}{r}\partial_r\psi\Big)
			\\
			&+\frac{\sin(2\psi)}{2 r^2}\bigg(\big(\partial_t\psi\big)^2-\big(\partial_r\psi\big)^2+\frac{(d-2)\sin^2(\psi)}{r^2}\bigg)=0. \label{sf skyrme eom}
\end{split}
\end{align}
We call Equation \eqref{sf skyrme eom} the equation of motion of the \textit{co-rotational, strong field Skyrme model}.

A few observations are in order. First, a direct calculation shows that solutions of Equation \eqref{sf skyrme eom} formally conserve the energy-type quantity
\[
			E_{SF}[\psi](t):=\frac{1}{2}\int_0^\infty\frac{\sin^2(\psi(t,r))}{r^2}\bigg((\partial_t\psi(t,r))^2+(\partial_r\psi(t,r))^2+\frac{d-2}{2}\frac{\sin^2(\psi(t,r))}{r^2}\bigg)r^{d-1}dr. \label{sf skyrme energy}
\]
	Furthermore,  in contrast to the full Skyrme model, Equation \eqref{sf skyrme eom} is scale invariant in the sense that given a solution $\psi$ and $\la>0$, one can obtain another solution $\psi_\la$ by setting 
		\begin{equation}
			\psi_\la(t,r) =\psi(t/\la,r/\la). \label{rescaling}
		\end{equation}
	The energy of a rescaled solution relates to that of the original solution according to
		$$
			E_{SF}[\psi_\la](t)=\la^{d-4}E_{SF}[\psi](t/\lambda). 
		$$
		The standard heuristic suggests that for $d \geq 5$, finite-time blowup via shrinking of solutions is energetically favorable.
In fact, for $d=5$ the second author \cite{M20} established the existence of a self-similar solution which is smooth in a backward light cone by using variational arguments al\'a Shatah \cite{S88}. Remarkably, we find that an \textit{explicit} self-similar solution exists in any dimension $d \geq 5$ which is given by 
\begin{align}
			\psi_{SF}^T(t,r)=U\Big(\frac{r}{T-t}\Big),\quad  T>0 \label{sf skyrme soln}
\end{align}
with the profile 
\begin{equation}\label{Profile_U_d}
 U(\rho)=\arccos\left (\frac{a- b \rho^2}{a+\rho^2} \right )
\end{equation}
where $a:=\frac{1}{3} \left(2 (d-4)+\sqrt{3(d-4) (d-2)}\right)$
and
$b :=2 \sqrt{\frac{d-4}{3(d-2)}}+1$. Observe that $U$ is smooth for $\rho \in [0, \rho^*]$, where $\rho^* = \sqrt{\frac{2a}{b-1}} > 1$. Moreover, $U(\rho^*) = \pi$. Hence,  $\psi_{SF}^T$ is a classical solution of Equation \eqref{sf skyrme eom} for $t\in(0,T)$ and $0\leq r \leq \rho^* (T-t)$. Moreover, while $ \psi^T_{SF}$ is perfectly smooth inside the backward light cone
\[ \mc C_T:=\{(t,r):0 \leq t<T,0\leq r<T-t\}, \]
it suffers a gradient blowup at the origin as $t \to T^{-}$ since 
\[  |\partial_r\psi_{SF}^T(t,0)|=\frac{c_d}{T-t}\]
for some $c_d > 0$. 

\subsection{The main result}\label{The Main Result}
We restrict ourselves to the lowest energy supercritical dimension $d=5$ and prove the stability of the self-similar blowup solution \eqref{sf skyrme soln} under small co-rotational perturbations, localized to a backward light cone, under the flow of Eq.~\eqref{sf skyrme eom}. For $d=5$, we have $a= b= \frac{5}{3}$ and the expression for the blowup profile \eqref{Profile_U_d} can be simplified to 
\begin{align}\label{Profile_U}
			U(\rho)=2\arctan\Big(\frac{2\rho}{\sqrt{5-\r^2}}\Big).
\end{align}

To state the main result, we slightly reformulate the problem.

\subsubsection{Refomulation as a nonlinear wave equation on $\R^{1+7}$}\label{Sec:Reformulation}

First, we observe that the self-similar solution satisfies $0\leq\psi_{SF}^T(t,r)<\pi$ for all $(t,r)\in\mc C_T$ with $\psi_{SF}^T(t,r)=0$ if and only if $r=0$. Assuming that $\psi$ is a smooth solution of Equation \eqref{sf skyrme eom} satisfying this same property, then Equation \eqref{sf skyrme eom} reduces to the following semilinear wave equation
		
			$$
				\Big(\partial_t^2\psi-\partial_r^2\psi-\frac{2}{r}\partial_r\psi\Big)+\cot(\psi)\Big(\big(\partial_t\psi\big)^2-\big(\partial_r\psi\big)^2\Big)+\frac{3}{2}\frac{\sin(2\psi)}{r^2}=0.
			$$
Due to the singularity at $r=0$ in the last term, we impose the condition $\psi(t,0)=0$ for all $t$. A direct calculation shows that the self-similar solution indeed satisfies this condition. Thus, it is natural to switch to the new independent variable 
			\begin{equation}
				u(t,r):=r^{-1}\psi(t,r). \label{rescaled soln}
			\end{equation}
		Doing so yields the equation
			\begin{equation}
				\Big(\partial_t^2u-\partial_r^2u-\frac{6}{r}\partial_ru\Big)-F\big(r u,r\partial_r u,r\partial_t u,r\big)=0 \label{rescaled semilinear sf skyrme eqn}
			\end{equation}
		where
			\begin{align}\label{Nonlin_Transformed_SF}
			\begin{split}
				F\big(r u,r\partial_r u,r\partial_t u,r\big)=&-\frac{1}{r}\cot(r u)\big((r\partial_t u)^2-(r\partial_r u)^2\big)-\frac{2}{r^2}\big(1-r u\cot(r u)\big)r\partial_r u
				\\
				&-\frac{\frac{3}{2}\sin(2r u)-2r u-(r u)^2\cot(r u)}{r^3}.
				\end{split}
			\end{align}
The solution \eqref{sf skyrme soln} transforms accordingly into
			$$
				u^T(t,r):=r^{-1}\psi^T_{SF}(t,r) = \frac{1}{T-t} \tilde U\left (\frac{r}{T-t} \right ),
			$$
for $\tilde U(\rho) := \rho^{-1} U(\rho)$. This variable transformation transforms the original equation into a semilinear radial wave equation in seven space dimensions (this approach has been used
frequently in the wave maps context). Furthermore, as long as $0\leq r u(t,r)<\pi$ for all $(t,r)$, with $r u(t,r)=0$ if and only if $r=0$, the nonlinearity is smooth. Throughout our analysis, we will show that, for sufficiently small perturbations of the blowup initial data, this property  is propagated throughout the flow. In the following, we denote the backward light cone in $(1+7)$-dimensions by 
\[ \mathfrak C_T := \{(t,x) \in [0,T) \times \R^7: 	 |x| \leq T-t \}. \]

The following result proves the nonlinear asymptotic stability of $u^T$ locally in a backward light cone modulo a small shift of the blowup time. In the statement of the theorem, we slightly abuse notation and identify radial functions with their radial representative.

\begin{theorem}\label{Th:Reform}
There are constants  $0 < \delta <1$, $c >1$ and $\omega >0$ such that the following holds. Let $(f,g) \in C^\infty_\text{rad}(\overline{\mathbb B^7_{2}})\times C^\infty_\text{rad}(\overline{\mathbb B^7_{2}})$ be real valued functions which satisfy 
\[
 \|(f,g)\|_{H^6(\mathbb B_{2}^7)\times H^5(\mathbb B_{2}^7)}\leq\frac{\delta}{c}.
\]
Then there exists a unique blowup time $T\in[1-\delta,1+\delta]$ depending Lipschitz continuously on $(f,g)$ and a unique solution  $u \in C^\infty_\text{rad}(\mathfrak C_T)$ of
Equation \eqref{rescaled semilinear sf skyrme eqn} satisfying  on $\overline {\mathbb B^7_T}$,
$$u(0,\cdot) = u^1(0,\cdot)+f,$$ $$\partial_t u(0,\cdot) = \partial_tu^1(0,\cdot)+g.$$  
Moreover, the solution has the decomposition
$$
u(t,r)  = \frac{1}{T-t} \left [\tilde U \bigg (\frac{r}{T-t} \bigg ) + \varphi \bigg (t,\frac{r}{T-t}  \bigg) \right ]
$$
with 
\begin{equation} \|(\varphi(t,\cdot), \partial_t \varphi(t,\cdot))  \|_{ H^5(\B^7) \times H^4(\B^7)} \lesssim  (T-t)^{\omega},\label{convergence}\end{equation}
for all $t \in [0,T)$. 
\end{theorem}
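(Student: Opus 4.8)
\textbf{Removing the derivative nonlinearity (the structural step).} The obstruction emphasised in the abstract is that linearising Equation~\eqref{rescaled semilinear sf skyrme eqn} in the variable $u$ produces first order differential terms in the linearised flow, which are not relatively compact with respect to the free wave evolution and which destroy the dissipativity and decay estimates underlying the standard self-similar stability machinery. This is bypassed by a geometric change of the dependent variable. In the semilinear reduction of Equation~\eqref{sf skyrme eom}, valid on $0<\psi<\pi$, namely $\partial_t^2\psi-\partial_r^2\psi-\tfrac{2}{r}\partial_r\psi+\cot(\psi)\big((\partial_t\psi)^2-(\partial_r\psi)^2\big)+\tfrac{3}{2}\tfrac{\sin(2\psi)}{r^2}=0$, the choice $v:=\cos\psi$ is precisely what makes the second order terms produced by the chain rule cancel the $\cot(\psi)$-nonlinearity; a direct computation gives
\[ \partial_t^2 v-\partial_r^2 v-\frac{2}{r}\partial_r v=\frac{3}{r^2}\,v\,(1-v^2), \]
and absorbing the weight $r^{-2}$ by passing to $V:=r^{-2}(1-\cos\psi)=r^{-2}(1-v)$ yields a \emph{genuinely semilinear} radial wave equation in seven space dimensions,
\[ \partial_t^2 V-\partial_r^2 V-\frac{6}{r}\partial_r V=9V^2-3r^2V^3 , \]
with polynomial nonlinearity and no first order or singular terms. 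Under this substitution the blowup solution \eqref{sf skyrme soln} becomes $V^T(t,r)=(T-t)^{-2}P\big(\tfrac{r}{T-t}\big)$ with the explicit rational profile $P(\xi)=\tfrac{8}{5+3\xi^2}$, and $u\leftrightarrow V$ is an invertible change of unknowns with smooth inverse near the blowup solution; in particular the condition $0\le ru<\pi$ guaranteeing smoothness of the nonlinearity in Equation~\eqref{rescaled semilinear sf skyrme eqn} translates into an open condition on $V$ that is manifestly stable under small perturbations, so that the assertion of the theorem is equivalent to the analogous statement for $V$. We work with $V$ henceforth and transfer back at the very end.

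\textbf{Similarity variables, linearisation, and semigroup.} Pass to self-similar coordinates $\tau=\log\tfrac{T}{T-t}$, $\xi=\tfrac{r}{T-t}$, which map $\mathfrak C_T$ onto $[0,\infty)\times\overline{\B^7}$, and rescale $V(t,r)=(T-t)^{-2}\Phi(\tau,\xi)$; the blowup solution now corresponds to the static profile $\Phi\equiv P$. Writing $\Phi=P+\varphi$ and passing to a first order formulation, one obtains on the Hilbert space $\mc H:=H^5(\B^7)\times H^4(\B^7)$ an abstract evolution equation
\[ \partial_\tau\vec\varphi(\tau)=(\mb L_0+\mb L')\,\vec\varphi(\tau)+\mb N\big(\vec\varphi(\tau)\big), \]
where $\mb L_0$ is the free wave part in similarity coordinates, $\mb L'$ is a \emph{bounded} — in fact compact — operator given by multiplication by the potential obtained from linearising $9V^2-3r^2V^3$ about the profile (crucially, no first order differential operator occurs), and $\mb N$ collects the quadratic and higher remainder, which is smooth and locally Lipschitz on $\mc H$ because the regularity index is chosen so that $H^4(\B^7)$ is a Banach algebra. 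By the Lumer--Phillips theorem together with the standard dissipativity estimates for the wave operator in similarity coordinates — here the index $5$ is large enough relative to $d=7$ for the free growth bound to be negative — $\mb L_0$ generates a strongly continuous semigroup on $\mc H$; since $\mb L'$ is compact, $\mb L:=\mb L_0+\mb L'$ generates a semigroup $S(\tau)$ whose spectrum in any half-plane $\{\Re\lambda\ge-\omega_0\}$ consists of finitely many eigenvalues of finite multiplicity.

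\textbf{Mode stability and the decay estimate.} The time-translation invariance $T\mapsto T+s$ of the family $V^T$ produces the eigenvalue $\lambda=1$ of $\mb L$, with the explicit eigenfunction whose first component is proportional to $2P(\xi)+\xi P'(\xi)=\tfrac{80}{(5+3\xi^2)^2}$. The main remaining technical step — and the analytic heart of the proof within this framework — is to show that $\lambda=1$ is \emph{simple and the only eigenvalue of $\mb L$ with $\Re\lambda\ge0$}. Reducing $\mb L\vec\varphi=\lambda\vec\varphi$ to a second order linear ODE for the first component on $(0,1)$, which by rationality of $P$ has only regular singular points, one excludes nontrivial solutions that are simultaneously analytic at the center $\xi=0$ and at the light cone boundary $\xi=1$ whenever $\lambda\ne1$, $\Re\lambda\ge0$; the explicit eigenfunction at $\lambda=1$ permits a reduction of order that makes this analysis tractable, and purely imaginary $\lambda$ are treated separately. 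Let $\mb P$ denote the rank one Riesz projection onto the $\lambda=1$ eigenspace. A Gearhart--Pr\"uss type argument, using the compactness of $\mb L'$ to obtain uniform resolvent bounds on $\{\Re\lambda\ge-\omega\}\setminus\{1\}$, then yields a constant $\omega>0$ with
\[ \big\|S(\tau)(1-\mb P)\big\|_{\mc H\to\mc H}\lesssim e^{-\omega\tau},\qquad\tau\ge0 . \]

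\textbf{Nonlinear stability, transfer, and smoothness.} With the Duhamel representation for $\vec\varphi$ and initial data $\vec\varphi(0)=\vec\varphi(0)(f,g,T)$, set up the Lyapunov--Perron fixed point equation
\[ \vec\varphi(\tau)=S(\tau)\big(\vec\varphi(0)-\mb C(\vec\varphi,T)\big)+\int_0^\tau S(\tau-\sigma)\,\mb N\big(\vec\varphi(\sigma)\big)\,d\sigma,\qquad \mb C(\vec\varphi,T):=\mb P\vec\varphi(0)+\int_0^\infty e^{-\sigma}\mb P\,\mb N\big(\vec\varphi(\sigma)\big)\,d\sigma, \]
and solve it by contraction in $\{\vec\varphi\in C([0,\infty),\mc H):\sup_{\tau\ge0}e^{\omega\tau}\|\vec\varphi(\tau)\|_{\mc H}\le\delta\}$ for all sufficiently small data, using the decay estimate above and the Lipschitz bounds on $\mb N$; this produces, for every blowup time $T$ near $1$, a unique small solution of the modified equation with $\|\vec\varphi(\tau)\|_{\mc H}\lesssim e^{-\omega\tau}$. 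Since at $(f,g)=0$, $T=1$ the data vanish and $\partial_T\big(\mb P\vec\varphi(0)\big)$ is, up to a nonzero factor, exactly $\mb P$ applied to the unstable eigenfunction, the map $T\mapsto\mb C(\vec\varphi,T)$ is a local homeomorphism of the one-dimensional unstable subspace, so the implicit function theorem provides a unique $T=T(f,g)\in[1-\delta,1+\delta]$, Lipschitz continuous in $(f,g)$, for which $\mb C(\vec\varphi,T)=0$; for this choice $\vec\varphi$ solves the genuine evolution equation. Undoing the rescaling and the substitution $\psi\leftrightarrow V$ gives the claimed solution $u$ of Equation~\eqref{rescaled semilinear sf skyrme eqn} on $\mathfrak C_T$ with the decay \eqref{convergence} in $H^5(\B^7)\times H^4(\B^7)$. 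Finally, propagation of higher Sobolev regularity for the semilinear wave equation — the extra derivative assumed on $(f,g)$, together with finite speed of propagation and the smoothness of the polynomial nonlinearity — upgrades $u$ to $C^\infty_{\mathrm{rad}}(\mathfrak C_T)$ and shows that the open condition ensuring smoothness of the transformed nonlinearity is propagated by the flow.
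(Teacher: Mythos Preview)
Your approach is correct and takes a genuinely different route from the paper. The paper works throughout with $u=r^{-1}\psi$, whose linearisation around the blowup profile contains first-order derivative terms; it removes these \emph{at the linear level} by constructing a bounded invertible conjugation $\mathbf\Gamma$ (built from $\sqrt{w(U)}\sim\sin(U)$) such that $\mathbf\Gamma(\mathbf L_0+\mathbf L')\mathbf\Gamma^{-1}=\mathbf L_0+\mathbf V$ with $\mathbf V$ compact. You instead remove the derivative nonlinearity \emph{at the nonlinear PDE level} via the substitution $v=\cos\psi$, $V=r^{-2}(1-v)$, obtaining the polynomial semilinear equation $\Box_7 V=9V^2-3r^2V^3$. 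Both routes are manifestations of the same underlying structure (Section~\ref{Lin_Skyrme}): your substitution is the nonlinear integral of the paper's linear conjugation, and indeed the resulting spectral ODE is \emph{identical} to the paper's Equation~\eqref{spectral equation} (your potential $18P-9\rho^2P^2$ differs from the paper's $\tilde V$ by exactly the constant $2$, which is compensated by the shift in scaling weight from $1$ to $2$).

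What each approach buys: your route is cleaner for this specific problem --- the nonlinearity becomes a cubic polynomial, so compactness of $\mathbf L'$ and the Lipschitz bounds are immediate, and no operator conjugation machinery is needed. The paper's route is more robust: the $\mathbf\Gamma$ conjugation works for the \emph{full} Skyrme model ($\alpha\neq0$), whereas $v=\cos\psi$ eliminates the $\cot(\psi)((\partial_t\psi)^2-(\partial_r\psi)^2)$ term only in the strong-field limit (for $\alpha\neq0$ the wave-maps part reintroduces uncancelled quadratic gradient terms). The paper also works directly in the variable $u$ in which Theorem~\ref{Th:Reform} is phrased, so no transfer step is required. Two points in your sketch deserve more care: the mode-stability step (``one excludes nontrivial solutions\dots'') is the analytic heart of the argument and in the paper requires the quasi-solution/recurrence method of Costin--Donninger--Glogi\'c rather than reduction of order; and the transfer $V\leftrightarrow u$, while smooth since $V=u^2\cdot\frac{1-\cos(ru)}{(ru)^2}$ with a nonvanishing Jacobian at the profile, should be checked to preserve the $H^5\times H^4$ smallness and decay quantitatively.
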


Some comments on the result are in order.

\begin{itemize}
\item Undoing the transformation \eqref{rescaled soln} yields a smooth solution  $\psi : \mc C_T \to \R$ of the original equation \eqref{sf skyrme eom} of the form 
\[ \psi(t,r) = \psi^T_{SF}(t,r) + \phi\left (t, \tfrac{r}{T-t}\right) \]
for every initial data sufficiently close to $\psi^1_{SF}$. Moreover, the perturbation decays to zero according to
\[ \||\cdot|^{-1} (\phi(t,\cdot), \partial_t \phi(t,\cdot))  \|_{ H^5(\B^7) \times H^4(\B^7)} \lesssim  (T-t)^{\omega}.\]
\item The regularity assumptions in Theorem \ref{The Main Result} ensure $L^{\infty}$-bounds for the perturbation and its time derivative, which allows us to define and control the nonlinearity.  In addition,  we assume smallness of the initial data in an even stronger topology, which we use to obtain Lipschitz-dependence on the blowup time via a fixed point argument, see Section \ref{Sec:Outline} for a more detailed explanation.\\
\item We have chosen to state the main result in the lowest possible dimension. In higher dimensions, the analogue of Theorem \ref{The Main Result} can be formulated provided that the spectral problem can be solved. However, we assume that the the generalization of the techniques implemented in this paper,  which are based on \cite{CDG17}, is straightforward for any given $d > 5$.   \\
\item We strongly conjecture that it is possible to prove blowup for the co-rotational Skyrme model in $d=5$ by using the profile  \eqref{Profile_U} together with the scaling properties of the full equation, see e.g. \cite{DW20}. We will motivate this conjecture in Remark \ref{conjecture}. This will be investigated in a forthcoming project.  \\
\end{itemize}

Before we proceed, we comment on a structural property of the full Skyrme model which we crucially exploit in the proof of 
Theorem \ref{The Main Result}.

\subsubsection{On the structure of the linearized Skyrme equation}\label{Lin_Skyrme}

The proof of Theorem \ref{Th:Reform} is based on the formulation of the evolution equation for small perturbations around the blowup initial data as an abstract Cauchy problem. The linearized problem is studied via semigroup methods including a detailed spectral analysis of the generator of the linearized flow in a highly non-self-adjoint setup. In order to translate spectral results into growth estimates for the corresponding evolution, we crucially exploit the following structural property of the Skyrme model, see also Section \ref{Sec:Outline}. By setting 
\[
				w(\psi)(t,r) :=\alpha r^{d-1}+\beta (d-1)r^{d-3}\sin^2(\psi(t,r)),
\]
		Equation \eqref{skyrme eom} can be written as
			\begin{align}
			\begin{split}
				\partial_t&(w(\psi) \partial_t\psi)-\partial_r ( w(\psi)\partial_r\psi)
				\\
				&+\frac{(d-1)r^{d-3}\sin(2\psi)}{2}\bigg(\alpha +\beta \Big(\frac{(d-2)\sin^2(\psi)}{r^2}+(\partial_r\psi)^2-(\partial_t\psi)^2\Big)\bigg)=0.
			\end{split} \label{Rewrite:Skyrme}
			\end{align}
Let $\Psi=\Psi(t,r)$ denote any sufficiently smooth solution of \eqref{Rewrite:Skyrme}. Linearizing around $\Psi$ yields a linear wave equation for the perturbation $\phi$ of the form
			\begin{equation}
				w(\Psi)\big(\partial_t^2\phi-\partial_r^2\phi\big)+\partial_tw(\Psi)\partial_t\phi-\partial_rw(\Psi)\partial_r\phi+V_{\Psi}(t,r)\phi=0 \label{linearized skyrme}
			\end{equation}
where $V_{\Psi}$ is some smooth potential. Upon setting
			$$
				\varphi:=\sqrt{w(\Psi)}\phi,
			$$
Equation \eqref{linearized skyrme} becomes
\begin{equation}\label{Lin:Standard_Form}
\sqrt{w(\Psi)}\big(\partial_t^2\varphi-\partial_r^2\varphi\big)+\tilde V_{\Psi}(t,r)\varphi=0
\end{equation}
for some smooth potential $\tilde V_{\Psi}$. In particular, \eqref{Lin:Standard_Form} does not contain first-order derivative terms. This property is remarkable since a \textit{single} change of variables cancels \textit{two} coefficients. That such a cancellation is possible is ensured by the form of the nonlinearity in Equation \eqref{Rewrite:Skyrme}. The original variable  $\phi$ can be recovered from the auxiliary variable $\varphi$ in any spacetime domain not containing zeros of $w(\Psi)$. When $\alpha\neq0$, a zero can only occur at $r=0$. If $\alpha=0$, like it is for the strong field Skyrme model, the invertibility of this transformation depends crucially on the background solution. In our case, we linearize around $\psi^T_{SF}$ which is strictly positive away from the origin and bounded away from $\pi$ within the backward light cone as long as $t < T$.

We note that this transformation is also used in the proof of the linear stability of the Skyrmion due to Creek, Donninger, Schlag, and Snelson \cite{CDSS16}. However, the Skyrmion is a static solution of Equation \eqref{Rewrite:Skyrme}. Thus, linearizing around the Skyrmion does not produce a $\partial_t\phi$-term in the analogue of Equation \eqref{linearized skyrme}. In this setting, removal of the $\partial_r\phi$-term mainly relies on properties of the background solution.  
For a time-dependent solution, like the self-similar solution $\psi^T_{SF}$, also the precise form of the full nonlinearity is essential to the successful removal of the corresponding first-order terms. 

\subsubsection{Discussion and related results}\label{Sec:related_results}

Closely related to Equations \eqref{skyrme eom} and \eqref{sf skyrme eom} are the wave maps equation \eqref{wave maps eom}, and the co-rotational, hyperbolic Yang-Mills equation given by
		\begin{equation}
			\partial_t^2\psi - \partial_r ^2\psi - \frac{d-3}{r} \partial_r \psi + \frac{(d-2)\psi(\psi+1)(\psi+2)}{r^2} = 0, \label{yang mills}
		\end{equation}
for $d \geq 3$. Both possess explicit self-similar solutions whose stability has been extensively studied over the past several years. In order to develop context around the present problem, we briefly summarize some of the results surrounding the stability of self-similar blowup. For a more general exposition on wave maps and the Yang-Mills equation, we refer the reader e.g. to Section 1.3 of \cite{G22} and Section 1.2 of \cite{G22a}, respectively.

The existence of self-similar solutions for Equation \eqref{wave maps eom} for $d=3$ was first proven by Shatah in \cite{S88} via variational techniques. Shortly thereafter, Turok and Spergel \cite{TS90} found what is believed to be an explicit form of Shatah's solution. More recently, the solution, as it is stated in Equation \eqref{wave maps soln}, was found by Biernat and Bizo\'n \cite{BB15}. The first nonlinear stability result within backward light cones for \eqref{wave maps soln} with $d=3$ is due to Donninger \cite{D11} based on the linearized results obtained by Aichelburg, Donninger, and the third author \cite{ADS12}. However, these results were conditional to a spectral assumption. The problem of spectral stability was then resolved by Costin, Donninger, and Xia \cite{CDX16} and Costin, Donninger, and Glogi\'c \cite{CDG17}. The extension of the stability result to all odd space dimensions $d\geq3$ is due to Chatzikaleas, Donninger, and Glogi\'c \cite{aCDG17}. Also, recently, for $d=4$, Donninger and Wallauch \cite{DW22} proved a nonlinear stability result at optimal regularity. Stable blowup for wave maps outside backward light cones has been established by Biernat, Donninger, and the third author \cite{BDS19} and Glogi\'c \cite{G22}. 

For the Yang-Mills equation, in dimensions $d=5,7,9$, the first construction of self-similar solutions for Equation \eqref{yang mills} is due to Cazenave, Shatah, and Tahvildar-Zadeh \cite{CST98}. Later, Bizo\'n \cite{B02} found this solution in closed form. The first rigorous proof of stable self-similar blowup within backward light cones is due to Donninger \cite{D14} and Costin, Donninger, Glogi\'c, and Huang \cite{CDGH16} for $d=5$; see also Biernat and Bizo\'n \cite{BB15} and Glogi\'c \cite{G22a} for the generalization to higher space dimensions. Stability outside the light cone has been analyzed by  Donninger and Ostermann \cite{DO21} as well as by Glogi\'c \cite{G22}.
	
Note that neither Equation \eqref{wave maps eom} nor \eqref{yang mills} possess quadratic or higher-order terms in the derivatives of the unknown. This is in stark contrast with Equations \eqref{skyrme eom} and \eqref{sf skyrme eom}. The present work therefor demonstrates how to deal with the additional difficulties that arise due to the presence of derivative nonlinearities in the Skyrme model. 

\subsection{Outline of the proof}\label{Sec:Outline}

We sketch the main steps in the proof of Theorem \ref{Th:Reform}.

\subsubsection*{Operator formulation in similarity coordinates}
Following the standard approach, we write the problem as a first-order system using similarity coordinates
\[ \t = - \log(T-t) + \log T, \quad  \xi = \frac{x}{T-t}  \]
for $(t,x)\in\mathfrak C_T$. This has the effect of transforming the stability of the self-similar solution $u^T$ into a more familiar nonlinear asymptotic stability problem for a static solution of a related evolution equation. The restriction of the independent variables to the backward light cone translates into $\tau \in [0, \infty)$, $\xi \in \overline{\mathbb B^7}$. The evolution of perturbations of $u^T$ is then governed by an operator equation of the form
\[
\partial_\t \Phi(\tau)= (\mathbf L_0+\mathbf L')\Phi(\tau) +\mathbf N(\Phi(\tau)) \label{first order eqn}
\]
for $\Phi(\tau) = (u_1,u_2)$ where $u_1$ and $u_2$ are suitable rescalings of $u$ and $\partial_tu$ in similarity coordinates. Here, $\mathbf L_0 = \mb L_W + \mb L_D$, where $\mb L_W$ represents the free wave part in similarity coordinates and, for 
$\mb u = (u_1,u_2)$, $\mb L_D \mb u = (0, -2 u_2)^{T}$ translates into a scale invariant damping term in physical coordinates. The operator $\mb N$ is the remaining nonlinearity.

\subsubsection*{The linearized flow} By exploiting the scaling properties of the problem, we prove exponential decay of the flow $(\mathbf{S}_0(\tau))_{\tau \geq 0}$ generated by $\mathbf L_0$ defined on a suitable domain $\mc D(\mb L_0) \subset \mc H : =H^5_\text{rad}(\mathbb B^7)\times H^4_\text{rad}(\mathbb B^7)$, see Proposition \ref{free evolution}. More precisely, we show that
\begin{align}
\label{Bound_S0_intro}
						\|\mathbf{S}_0(\tau)\mathbf u\|_\mathcal{H}\lesssim e^{-\frac{1}{2}\tau}\|\mathbf u\|_\mathcal H.
\end{align}
For this, we use a modified inner product analogous to \cite{GS21} which we generalize in order to control the flow in arbitrarily higher Sobolev norms (this is used to prove smoothness later on).  The existence of a semigroup  $(\mb S(\tau))_{\tau \geq 0 }$ generated by the linearized operator $\mb L = \mb L_0 + \mb L'$ follows from the boundedness of $\mb L'$. 

Explicitly, we have
\begin{align*}
				\mathbf{L}'\mathbf{u}(\xi):=
					\begin{pmatrix}
					0
					\\
					V_1(|\xi|)u_1(\xi)+ V_2(|\xi|)\big(|\xi|^2u_2(\xi)-\xi^j\partial_j u_1(\xi)\big)
				\end{pmatrix}
\end{align*}
for $\mb u  \in \mc H$ and smooth functions $V_1$ and $V_2$ to be specified later. The fact that $\mb L'$ contains a derivative prevents the operator from being compact (in fact, one can show that it is relatively compact with respect to $\mb L$). This is fundamentally different from previous problems to which the semigroup method has been applied. The structure of the perturbation causes major problems concerning the translation of spectral information into growth bounds for the corresponding semigroup. In fact, none of the soft arguments that have been used in previous works can be applied here. Of course, in view of the Gearhart-Pr\"uss-Greiner Theorem (see pg. 322, Theorem 1.11 of \cite{EN00}), constructing the resolvent of $\mb L$ and proving suitable uniform bounds for large imaginary parts would resolve the problem. However, this is a challenging and extremely technical endeavor. We avoid this by exploiting the structural property of the linearized Skyrme model, see Section \ref{Lin_Skyrme}, which we translate to our specific problem (such that its origin is not entirely obvious). In fact, we prove the existence of a bounded invertible operator $\mathbf\Gamma$ on $\mathcal H$ and a bounded operator $\mathbf V$ such that 
			$$
			\mathbf\Gamma \mathbf L \mathbf\Gamma^{-1} = 	\mathbf\Gamma(\mathbf L_0+\mathbf L')\mathbf\Gamma^{-1}=\mathbf L_0+\mathbf V =: \mb L_{\mb V} 
			$$
with
	$$
		\mathbf\Gamma\mathbf u(\xi):=
							\frac{\sqrt{w(U(|\xi|))}}{4|\xi|^2}\begin{pmatrix}
								1&0
								\\
								-\frac{1}{2}|\xi|^2V_2(|\xi|)&1
							\end{pmatrix}
							\begin{pmatrix}
								u_1(\xi)
								\\
								u_2(\xi)
							\end{pmatrix}
	$$
and $w$ as in Section \ref{Lin_Skyrme} with $\alpha=0$, $\beta=1$. Despite the apparent singularity at $\xi=0$, $\mathbf \Gamma$ is indeed invertible in $\mc H$ as will be shown in Section \ref{Semigroup theory}. The new operator $\mathbf V$ does not contain derivatives and turns out to be a \textit{compact} operator on $\mc H$. Thus, by extracting spectral information on $\mb L_{\mb V}$, see below, we can use merely the structure of $\mb L_{\mb V}$ together with the Biermann-Schwinger principle to get resolvent bounds for $\mb L_{\mb V}$, see Proposition \ref{Le:Spectum_Struct}, and thus bounds for the semigroup $(\mb S_{\mb V}(\tau))_{\tau \geq 0}$. The fact that $\mb S_{\mb V}(\tau) = \mb \Gamma \mb S(\tau) \mb \Gamma^{-1}$ for all $\tau \geq 0$  finally implies bounds on the linearized evolution.

\subsubsection*{Spectral analysis and growth bounds}
The spectral problem underlying the stability of self-similar solutions of nonlinear wave equations is notably difficult, since the highly non-self-adjoint nature largely prevents the application of standard methods. 

First note that $\sigma(\mb L) = \sigma(\mb L_{\mb V})$ by definition. It is easy to see that the time translation symmetry of the problem introduces the unstable eigenvalue $1 \in \sigma_p(\mb L)$. Also, the growth bound \eqref{Bound_S0_intro} in combination with the (relative) compactness of the potential immediately imply that 
\[ \sigma(\mb L) \cap \{ \lambda \in \C : \mathrm{Re} \lambda > -\tfrac{1}{2} \} \subset \sigma_p( \mb L ). \]
In the radial case, the eigenvalue equation $(\la-\mb L)\mb u=\mb 0$ can be reduced to a single second order ODE with singular coefficients for the first component of $\mb u$, see Lemma \ref{point spectrum and mode solutions}. A Frobenius analysis reveals that eigenfunctions have to be smooth inside the backward light cone including, in particular, the boundary. Following the by now standard approach developed in \cite{CDGH15,G18}, we prove that no smooth solutions exist for $ \lambda \in \C$ with $\mathrm{Re} \lambda \geq 0$ and $\lambda \neq 1$. In fact, $\lambda =1$ is an eigenvalue that is introduced by time-translation symmetry. We note that although the methods of \cite{CDGH15,G18} are systematic, they rely on the details of the underlying potential and their success is not guaranteed a priori. However, in our case, we are able to prove the existence of an $\omega_0 > 0$ such that 
\[
	\sigma(\mb L) = \sigma(\mb L_{\mb V}) \subseteq\{\lambda\in\mathbb{C}:\Re\lambda \leq -\omega_0 \}\cup\{1\}.
\]
Using the reasoning explained above, this translates into growth bounds for the linearized evolution. More precisely, we prove the existence of a  spectral projection $\mb P$ onto the eigenspace corresponding to $\lambda = 1$ such that  
\[ \|\mathbf S(\t)(1-\mathbf P)\mathbf u\|_\mathcal H\lesssim e^{-\omega\t}\|(1-\mathbf P)\mathbf u\|_\mathcal H \]
for some $\omega > 0$.

\subsubsection*{The nonlinear problem}

The nonlinear problem is treated via fixed point arguments relying on the integral formulation
\[ \Phi(\tau)=\mathbf S(\tau)\mb u+\int_0^\tau\mathbf S(\tau-s)\mathbf N\big(\Phi(s)\big)ds. \]
In order to ensure that the nonlinearity is defined and smooth, we have to guarantee that perturbations are pointwise small, which is granted by Sobolev embedding. Furthermore, the regularity imposed by $\mc H$ is sufficient to obtain local Lipschitz bounds for the nonlinearity by exploiting the algebra property of $H^k(\mathbb B^7)$ for $k \geq 4$. The rest of the proof follows standard arguments.

\begin{remark}[\textit{On the Blowup Conjecture for the $(5+1)$-dimensional Skyrme Model}] \label{conjecture}
	In similarity coordinates, it is possible to view the wave maps terms in Equation \eqref{skyrme eom} as lower-order compared to the strong field Skyrme terms nearby $\psi^T_{SF}$. Switching to the variable $u(t,r)=r^{-1}\psi(t,r)$, seeking a solution of the form $u(t,r)=u^T(t,r)+v(t,r)$ of the transformed equation and converting to similarity coordinates as described in Section \ref{Sec:Outline} yields an operator equation of the form
$$
\partial_\t \Phi(\tau)= (\mathbf L_0+\mathbf L')\Phi(\tau) +\mathbf N(\Phi(\tau))+T^2e^{-2\tau}\mathbf G_T(\Phi(\tau),\tau)
$$
where $\mathbf G_T$ contains the wave maps terms expanded around $\psi^T_{SF}$. By proving sufficient bounds on this term, it appears plausible that taking $T$ sufficiently small will yield solutions of Equation \eqref{skyrme eom} which remain close to $\psi^T_{SF}$ within $\mathfrak C_T$.
\end{remark}

\subsection{Notation and conventions}
Given $R>0$ and $n\in\mathbb N$, we denote by $\mathbb B_R^n:=\{x\in\mathbb R^n:|x|<R\}$ the open ball in $\mathbb R^n$ of radius $R$ centered at the origin. When $R=1$, we drop the subscript and simply write $\mathbb B^n$. By $\mathbb H$ we denote the open right-half plane in $\mathbb C$, i.e., $\mathbb H:=\{z\in\mathbb C:\Re z>0\}$. On a Hilbert space $\mathcal H$, we denote by $\mathcal B(\mathcal H)$ the space of bounded linear operators. For a closed operator $L$ on the Hilbert space $\mathcal H$ with domain $\mathcal D(L)$, we denote its resolvent set by $\rho(L)$ and by $R_L(\lambda):=(\lambda I- L)^{-1}$ the resolvent operator for $\lambda\in\rho( L)$. Furthermore, we denote by $\sigma( L):=\mathbb C\setminus\rho( L)$ the spectrum of $L$ and by $\sigma_p( L)$ its point spectrum. As we will only work with strongly continuous semigroups $\big(S(s)\big)_{s\geq0}$ of bounded operators on $\mathcal H$, we will instead refer to these more simply as semigroups on $\mathcal H$ whenever necessary. Given $x,y\geq0$, we say $x\lesssim y$ if there exists a constant $C>0$ such that $x\leq Cy$. Furthermore, we say that $x\simeq y$ if $x\lesssim y$ and $y\lesssim x$. If the constant $C$ depends on a parameter, say $k$, we will write $x\lesssim_ky$ when it is important to note the dependence on this parameter. 

\subsubsection{Function spaces}\label{Function Spaces}
For $R>0$, let
\[ C^\infty_\text{rad}(\overline{\mathbb B_R^7})=\{u\in C^\infty(\overline{\mathbb B_R^7}):u\text{ is radial}\}. \]
For $k\in\N$, we define the radial Sobolev space $H_\text{rad}^k(\mathbb B_R^7)$ as the completion of $C^\infty_\text{rad}(\overline{\mathbb B_R^7})$ under the standard Sobolev norm
\[ \|u\|_{H^k(\mathbb B_R^7)}^2:=\sum_{|\alpha|\leq k}\|\partial^\alpha u\|_{L^2(\mathbb B_R^7)}^2 \]
with $\alpha\in\mathbb N_0^7$ denoting a multi-index with $\partial^\alpha u=\partial_1^{\alpha_1}\dots\partial_d^{\alpha_d}u$ and $\partial_iu(x)=\partial_{x^i}u(x)$. In many places it will be convenient to work with radial representatives of functions in $C^\infty_\text{rad}(\overline{\mathbb B_R^7})$. That is, for any function $u\in C^\infty_\text{rad}(\overline{\mathbb B_R^7})$, there is a function $\hat u:[0,R]\to\mathbb C$ such that $u(x)=\hat u(|x|)$ for all $x\in\overline{\mathbb B^7}$. In fact, by Lemma 2.1 of \cite{G22a}, we have $\hat u\in C_e^\infty[0,R]$ where $C_e^\infty[0,R]$ denotes the space of `even' functions
\[
				C_e^\infty[0,R]:=\{u\in C^\infty[0,R]:u^{(2k+1)}(0)=0,\;k\in\N_0\}. \]
It will be convenient to also consider the space of `odd' functions, i.e., 
\[
				C_o^\infty[0,R]:=\{u\in C^\infty[0,R]:u^{(2k)}(0)=0,\;k\in\N_0\}. \]

\section{First-order formulation}\label{First-order formulation}
In this section, we perform some preliminary transformations, introduce similarity coordinates, and convert Equation \eqref{rescaled semilinear sf skyrme eqn} with initial data
	$$
		u(0,r)=u^1(0,r)+f(r),\quad\partial_tu(0,r)=\partial_tu^1(0,r)+g(r)
	$$ 
into a suitable abstract initial value problem for a first-order system.

For $T>0$, we define \textit{similarity coordinates} $(\t,\r)$ via the equation
			 $$
				(\t,\r):=\bigg(\log\Big(\tfrac{T}{T-t}\Big),\frac{r}{T-t}\bigg).
			$$
Restricting ourselves to the backward light cone $\mc C_T$ implies that $\rho \in [0,1]$ and $\tau \in [0,\infty)$.
By introducing rescaled dependent variables $\psi_1$ and $\psi_2$,
			$$
				\psi_1(\t,\r) =(T-t) u(t,r)|_{(t,r)=\big(t(\t,\r),r(\t,\r)\big)},\;\psi_2(\t,\r)=(T-t)^2\partial_t u(t,r)|_{(t,r)=\big(t(\t,\r),r(\t,\r)\big)},
			$$
Equation \eqref{rescaled semilinear sf skyrme eqn} becomes
			\begin{align*}
				\begin{pmatrix}
					\partial_\tau\psi_1
					\\
					\partial_\tau\psi_2
				\end{pmatrix}
				=&
				\begin{pmatrix}
					-\r\partial_\r-1&1
					\\
					\Delta_{\text{rad}}&-\r\partial_\r-2
				\end{pmatrix}
				\begin{pmatrix}
					\psi_1
					\\
					\psi_2
				\end{pmatrix}
				+
				\begin{pmatrix}
					0
					\\
					F(\r\psi_1,\r\partial_\r\psi_1,\r\psi_2,\r)
				\end{pmatrix}
			\end{align*}
		where $\Delta_{\text{rad}}=\partial_\r^2+\frac{6}{\r}\partial_\r$ denotes the seven-dimensional, radial Laplacian and $F$ is given by \eqref{Nonlin_Transformed_SF}. The linear portion of this equation is the seven-dimensional linear wave equation in our rescaled variables. The blowup solution transforms according to the equation
\[
				\begin{pmatrix}
					(T-t)u^T(t,r)
					\\
					(T-t)^2 \partial_t u^T(t,r)
				\end{pmatrix}\Bigg|_{(t=t(\tau,\rho),r=r(\tau,\rho))}
				=
				\begin{pmatrix}
					\tilde U(\r)
					\\
					U'(\r)
				\end{pmatrix}=:\begin{pmatrix} U_1(\rho) \\ U_2(\rho) \end{pmatrix}. \]
In particular, observe that the blowup solution corresponding to blowup time $T$ is static in these coordinates.  
Inserting the ansatz 
\[ 		
	\begin{pmatrix}
	\psi_1(\tau,\rho)
	\\
	\psi_2(\tau,\rho)
	\end{pmatrix} =	
		\begin{pmatrix}
					U_1(\rho)
					\\
					U_2(\rho)
				\end{pmatrix} 	+ \begin{pmatrix}
					\varphi_1(\tau,\rho)
					\\
					\varphi_2(\tau,\rho)
				\end{pmatrix} \] 
yields 
			\begin{align}
			\begin{split}
				\begin{pmatrix}
					\partial_\tau\varphi_1
					\\
					\partial_\tau\varphi_2
				\end{pmatrix}
				=&
									\begin{pmatrix}
					-\r\partial_\r-1&1
					\\
					\Delta_{\text{rad}}&-\r\partial_\r-2
				\end{pmatrix}
				\begin{pmatrix}
					\varphi_1
					\\
					\varphi_2
				\end{pmatrix}
				+
				\begin{pmatrix}
					0
					\\
					V_1(\r)\varphi_1+\mathring V_1(\r)\partial_\r\varphi_1+\mathring V_2(\r)\varphi_2
				\end{pmatrix}
				\\
				&+
				\begin{pmatrix}
					0
					\\
					N(\r\varphi_1,\r\partial_\r\varphi_1,\r\varphi_2,\r)
				\end{pmatrix} \label{expanded first order eqn}
			\end{split}
			\end{align}
		where $V_1, \mathring V_1, \mathring V_2\in C_e^\infty[0,1]$ are given explicitly by
\begin{align}\label{Def_V1}
V_1(\r) & =\r\partial_2 F(\r,\r U_1,\r \partial_\rho U_1 ,\r U_2)=-\frac{5\big(21\rho^6-375\rho^4+1455\rho^2-2125\big)}{\big(5+3\rho^2\big)^2\big(5-\rho^2\big)^2},
\end{align}
$$
\mathring V_1(\r)  =\r\partial_3F(\r,\r U_1,\r \partial_\rho U_1 ,\r U_2)=\frac{2\r\big(3\rho^2-35\big)}{\big(5+3\rho^2\big)\big(5-\rho^2\big)},
$$
and 
$$
\mathring V_2(\r)  =\r\partial_4 F(\r,\r U_1,\r \partial_\rho U_1 ,\r U_2)=-\frac{50\big(1-\r^2\big)}{\big(5+3\rho^2\big)\big(5-\rho^2\big)},
$$
	and $N$, the nonlinear remainder, is given by  
			\begin{align}
			\begin{split}
				N(\r\varphi_1,\r\partial_\r\varphi_1,\r\varphi_2,\r) & = 
F(\r,\r U_1+\r\varphi_1,\r \partial_\rho U_1  +\r\partial_\r\varphi_1,\r U_2+\r\varphi_2)\\
& -F(\r,\r U_1,\r \partial_\rho U_1 ,\r U_2) -V(\rho) \varphi_1-\mathring V_1(\r) \partial_\r\varphi_1 -\mathring V_2(\r)\varphi_2. \label{scalar nonlinear remainder}
			\end{split}
			\end{align}
In order to treat the second term in Equation \eqref{expanded first order eqn} perturbatively, we use the identity
\[ \mathring V_2(\r)= 2 -\r \mathring V_1(\r) \]
to rewrite Equation \eqref{expanded first order eqn} as
			\begin{align}
			\begin{split}
				\begin{pmatrix}
					\partial_\tau\varphi_1
					\\
					\partial_\tau\varphi_2
				\end{pmatrix}
				=&
													\begin{pmatrix}
					-\r\partial_\r-1&1
					\\
					\Delta_{\text{rad}}&-\r\partial_\r-4
				\end{pmatrix}
				\begin{pmatrix}
					\varphi_1
					\\
					\varphi_2
				\end{pmatrix}
				+
				\begin{pmatrix}
					0
					\\
					V_1(\r)\varphi_1+ V_2(\rho)\rho\big(\rho \varphi_2-\partial_\r\varphi_1\big)
				\end{pmatrix}
				\\
				&+
				\begin{pmatrix}
					0
					\\
					N(\r\varphi_1,\r\partial_\r\varphi_1,\r\varphi_2,\r)
				\end{pmatrix} \label{modified expanded first order eqn}
			\end{split}
			\end{align}
		where $V_2(\rho) := - \rho^{-1} \mathring V_1(\rho)$ is given explicitly by
\begin{align}\label{V2_Def}
 V_2(\rho)=-\frac{2\big(3\rho^2-35\big)}{\big(5+3\rho^2\big)\big(5-\rho^2\big)}. 
\end{align}
Furthermore, by a direct calculation, one sees that the initial data becomes
\begin{align}\label{Initia_Data_Trans}
		\begin{pmatrix}
	    \varphi_1(0,\cdot)
					\\
	   \varphi_2(0,\cdot)
				\end{pmatrix} = \begin{pmatrix}
	    T U_1(T \cdot) - U_1(\cdot)  + T(\cdot)^{-1}f(T \cdot) 
					\\
	   T^2 U_2(T \cdot) - U_2(\cdot) + T^2(\cdot)^{-1}g(T \cdot) 
				\end{pmatrix} 
\end{align}

In the following, we treat  \eqref{modified expanded first order eqn} and \eqref{Initia_Data_Trans} as an abstract initial value problem on a Sobolev space of radial functions. More precisely, we define
\[ \mc H^k := 	H_{\text{rad}}^k(\mathbb B^7)	 \times H_{\text{rad}}^{k-1}(\mathbb B^7) \]
which comes equipped with the norm 
\[ \|\mathbf u\|_{\mathcal H^k}^2:=\|u_1\|_{H^k(\mathbb B^7)}^2+\|u_2\|_{H^{{k-1}}(\mathbb B^7)}^2 \]
for $\mathbf u=(u_1,u_2)$ and the dense subset
$C^\infty_\text{rad}(\overline{\mathbb B^7}) \times C^\infty_\text{rad}(\overline{\mathbb B^7})$. Central to our analysis is the space $\mc H^5$ which we will more simply denote as $\mc H$.
		
\section{The linear time evolution} \label{The linear time evolution}

For $\xi\in\mathbb B^7$ and $\mathbf u=(u_1,u_2)\in C^\infty_\text{rad}(\overline{\mathbb B^7}) \times C^\infty_\text{rad}(\overline{\mathbb B^7})$, we define
\[
				\tilde{\mathbf{L}}_0\mathbf u(\xi):=
					\begin{pmatrix}
					-\xi^j \partial_j-1&1
					\\
					\Delta &- \xi^j \partial_j-4
				\end{pmatrix}
				\begin{pmatrix}
					u_1(\xi)
					\\
					u_2(\xi)
				\end{pmatrix}				
\]
where $\partial_j=\partial_{\xi^j}$. Equipped with the domain $\mathcal D(\tilde{\mathbf L}_0):=C^\infty_\text{rad}(\overline{\mathbb B^7})\times C^\infty_\text{rad}(\overline{\mathbb B^7})$, the unbounded operator $\big(\tilde{\mathbf L}_0,\mathcal D(\tilde{\mathbf L}_0)\big)$ is densely-defined on $\mathcal H$. Writing $\tilde{\mathbf{L}}_0\mathbf u$ in terms of radial representatives gives exactly the first term on the right-hand side of Equation \eqref{modified expanded first order eqn}.  We note that $\tilde{\mb L}_0$ does not describe the free wave evolution, but corresponds to a damped wave equation with a scale invariant damping term in physical coordinates.
		
		Furthermore, on $C^\infty_\text{rad}(\overline{\mathbb B^7}) \times C^\infty_\text{rad}(\overline{\mathbb B^7})$ we define
$$
				\mathbf{L}'\mathbf{u}(\xi):=
					\begin{pmatrix}
					0
					\\
					V_1(|\xi|)u_1(\xi)+ V_2(|\xi|)\big(|\xi|^2u_2(\xi)-\xi^j\partial_j u_1(\xi)\big)
				\end{pmatrix},
$$
with $V_1, V_2 \in C_e^\infty[0,1]$ defined in \eqref{Def_V1} and \eqref{V2_Def} respectively. Note that  $\mathbf{L}'$  extends to a bounded operator on  $\mathcal H$ which we again denote by $\mathbf L'$.

\subsection{Semigroup theory} \label{Semigroup theory}

			\begin{proposition}\label{free evolution}
				The operator $\big(\tilde{\mathbf{L}}_0,\mathcal D(\tilde{\mathbf{L}}_0)\big)$ is closable in $\mathcal H$ and its closure, denoted by $\big(\mathbf{L}_0,\mathcal D(\mathbf{L}_0)\big)$, is the generator of a semigroup on $\mathcal H$, $(\mathbf{S}_0(\tau))_{\tau\geq0}$, satisfying the estimate
					$$
						\|\mathbf{S}_0(\tau)\mathbf u\|_\mathcal{H}\leq Me^{-\frac{1}{2}\tau}\|\mathbf u\|_\mathcal H
					$$
				for all $\tau\geq0$, $\mathbf u \in\mathcal H$, and for some constant $M \geq 1$.
			\end{proposition}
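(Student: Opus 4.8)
The plan is to establish Proposition~\ref{free evolution} via the Lumer--Phillips theorem applied to a suitable rescaling of $\tilde{\mathbf L}_0$, using a carefully chosen inner product on $\mathcal H = \mathcal H^5$ that is equivalent to the standard one. First I would reduce the exponential decay rate $e^{-\tau/2}$ to a dissipativity statement: it suffices to show that $\mathbf L_0 + \tfrac12$ (more precisely, some $\mathbf L_0 + \nu$ with $\nu \geq \tfrac12$, after possibly enlarging $M$ by a bounded perturbation argument at lower orders) generates a contraction semigroup in an equivalent norm. The natural candidate inner product is built hierarchically: on the top-order part one uses the $\dot H^5 \times \dot H^4$ seminorm, adapted to the radial Laplacian, and at each lower order one adds correction terms with small coupling constants, exactly in the spirit of the modified inner product of Glogi\'c--Sch\"orkhuber \cite{GS21}. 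The key computation is then to show $\mathrm{Re}\,(\tilde{\mathbf L}_0 \mathbf u \mid \mathbf u)_{\mathcal H} \leq -\tfrac12 \|\mathbf u\|_{\mathcal H}^2$ (in the equivalent norm) for all $\mathbf u$ in the dense domain. The structure of $\tilde{\mathbf L}_0$ makes this tractable: the off-diagonal $(1\leftrightarrow 2)$ coupling cancels against itself after integration by parts, the scaling generator $-\xi^j\partial_j$ contributes a definite-sign boundary term on $\partial\mathbb B^7$ plus a bulk term proportional to the dimension (here $7$), and the diagonal zeroth-order shifts $-1$ and $-4$ provide the needed negativity; the price is that the $\xi^j\partial_j$ operator on high derivatives produces commutator terms of the same top order, which is precisely why one must track the hierarchy of seminorms and choose the coupling constants small.

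Concretely, the steps I would carry out are: (i) define the sesquilinear form $(\cdot\mid\cdot)_{\mathcal H}$ as a sum $\sum_{k} a_k (\cdot\mid\cdot)_k$ of order-$k$ pieces with weights $a_k>0$ to be fixed, and verify it induces a norm equivalent to $\|\cdot\|_{\mathcal H^5}$; (ii) integrate by parts in the identity for $\mathrm{Re}(\tilde{\mathbf L}_0\mathbf u\mid\mathbf u)_{\mathcal H}$, being careful with the boundary terms at $|\xi|=1$ arising from the dilation generator and from the Laplacian --- these are the places where the choice $\mathbb B^7$ (bounded domain) really matters, and one checks they have the right sign or can be absorbed; (iii) collect the bulk terms: the dilation part gives $(\text{const})\|\partial^\alpha u_i\|_{L^2}^2$ with a constant depending on $7$ and on the order $k$ of differentiation, the zeroth-order shifts give $-\|u_1\|^2$ and $-4\|u_2\|^2$ type contributions, and the cross terms between consecutive orders (from commuting $\xi^j\partial_j$ past $\partial^\alpha$) are controlled by Cauchy--Schwarz at the cost of the small weights $a_k$; (iv) choose the weights so that the quadratic form in the seminorms is $\leq -\tfrac12$ times the equivalent norm; (v) conclude dissipativity, note that $\mathrm{rg}(\lambda - \tilde{\mathbf L}_0)$ is dense (or solve $(\lambda-\tilde{\mathbf L}_0)\mathbf u = \mathbf f$ explicitly for one $\lambda$ via an ODE computation, which is straightforward since the radial problem is a system of ODEs with a regular-singular point at $0$), and invoke Lumer--Phillips to get the closure $\mathbf L_0$ and the contraction semigroup, hence \eqref{Bound_S0_intro} with some $M\geq 1$ after translating back.

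The main obstacle I expect is step (iii)--(iv): organizing the cancellations and the hierarchy of seminorms so that all top-order commutator contributions are genuinely controlled. The subtlety is that $-\xi^j\partial_j$ applied to $\partial^\alpha u$ with $|\alpha|=k$ yields $-\partial^\alpha(\xi^j\partial_j u) + k\,\partial^\alpha u$ plus lower-order commutator terms, so at each level $k$ there is both a "good" diagonal gain of size $k$ (which grows with $k$, so is helpful) and "bad" cross-level terms linking level $k$ with level $k-1$; balancing these requires the weights $a_k$ to be chosen in a definite order (say from the top down), and one must check that the boundary terms generated on $\partial\mathbb B^7$ by each integration by parts do not spoil the sign. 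A secondary, more routine but still delicate point is the behavior at $\xi=0$: one must ensure the integration by parts is legitimate for radial functions, which follows from working with the smooth dense class $C^\infty_{\text{rad}}(\overline{\mathbb B^7})$ and the evenness structure recorded in Section~\ref{Function Spaces}. Once the form estimate is in place, closability, generation, and the decay estimate follow from standard semigroup theory (Lumer--Phillips plus the bounded-perturbation/rescaling argument), so the entire weight of the proof sits in the coercivity computation for the modified inner product.
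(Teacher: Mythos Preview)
Your proposal is correct and follows essentially the same route as the paper: Lumer--Phillips on a dense domain, dissipativity in an equivalent inner product built along the lines of \cite{GS21}, and the range condition via an explicit ODE computation. Two minor differences in execution are worth flagging: first, the paper's equivalent norm is not a weighted hierarchy $\sum_k a_k(\cdot\mid\cdot)_k$ with small $a_k$, but rather the specific \cite{GS21} construction in which the low-order pieces include \emph{boundary} integrals over $\mathbb S^6$ (this is precisely how the boundary terms you worry about are absorbed, rather than by a sign check); second, the paper splits $\tilde{\mathbf L}_0 = \tilde{\mathbf L}_W + \tilde{\mathbf L}_D$ into the free-wave part plus the damping $\mathbf u\mapsto(0,-2u_2)$ and then quotes the dissipativity of $\tilde{\mathbf L}_W$ from \cite{CGS21}, so the only new observation is $\mathrm{Re}(\tilde{\mathbf L}_D\mathbf u\mid\mathbf u)\le 0$.
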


The proof of the growth bound necessitates the use of an equivalent inner product on $\mc H$ along the lines of \cite{GS21}. We defer the proof to Appendix \ref{The Free Wave Evolution}.

In view of Proposition \ref{free evolution} and the boundedness of $\mathbf L'$, we infer closedness of the operator 
\[ \mathbf L := \mathbf L_0+\mathbf L' \]
with domain $\mathcal D(\mathbf L):=\mathcal D(\mathbf L_0) \subset \mc H$. The following statement is a simple consequence of the Bounded Perturbation Theorem (see \cite{EN00}, p. 158, Theorem 1.3).
			
\begin{proposition} \label{perturbed evolution}
				The operator $(\mathbf L, \mathcal D(\mathbf L))$  is the generator of a semigroup on $\mathcal H$, $(\mathbf{S}(\tau))_{\tau\geq0}$, satisfying the estimate
					\begin{equation}
						\|\mathbf{S}(\tau)\mathbf u\|_\mathcal{H}\leq Me^{(-\frac{1}{2}+M\|\mathbf L'\|)\tau}\|\mathbf u \|_\mathcal H\label{bpt bound}
					\end{equation}
				for all $\tau\geq0$, $\mathbf  u \in\mathcal H$, and $M \geq 1$ as in Proposition \ref{free evolution}.
			\end{proposition}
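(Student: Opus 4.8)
The statement to prove is Proposition~\ref{perturbed evolution}, which asserts that $\mathbf L = \mathbf L_0 + \mathbf L'$ generates a semigroup with the stated exponential bound. The plan is to invoke the Bounded Perturbation Theorem directly, as the paper itself signals.

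\medskip

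\textbf{Setup and verification of hypotheses.} First I would recall the precise hypotheses of the Bounded Perturbation Theorem (Engel--Nagel, \cite{EN00}, p.~158, Theorem 1.3): if $(\mathbf L_0, \mathcal D(\mathbf L_0))$ generates a strongly continuous semigroup $(\mathbf S_0(\tau))_{\tau \geq 0}$ on a Banach space $\mathcal H$ satisfying $\|\mathbf S_0(\tau)\| \leq M e^{\omega_0 \tau}$, and $\mathbf L' \in \mathcal B(\mathcal H)$, then $\mathbf L_0 + \mathbf L'$ with domain $\mathcal D(\mathbf L_0)$ generates a strongly continuous semigroup $(\mathbf S(\tau))_{\tau \geq 0}$ satisfying $\|\mathbf S(\tau)\| \leq M e^{(\omega_0 + M\|\mathbf L'\|)\tau}$. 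Here the two ingredients are supplied directly by the excerpt: Proposition~\ref{free evolution} gives that $\mathbf L_0$ generates $(\mathbf S_0(\tau))_{\tau \geq 0}$ on $\mathcal H$ with $\|\mathbf S_0(\tau)\| \leq M e^{-\frac12 \tau}$, so $\omega_0 = -\tfrac12$ and $M \geq 1$ is the constant from that proposition; and the discussion following the definition of $\mathbf L'$ establishes that $\mathbf L'$ extends to a bounded operator on $\mathcal H$. Thus both hypotheses hold verbatim.

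\medskip

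\textbf{Application and identification of the domain.} Applying the theorem with $\omega_0 = -\tfrac12$ yields that $(\mathbf L_0 + \mathbf L', \mathcal D(\mathbf L_0))$ generates a strongly continuous semigroup $(\mathbf S(\tau))_{\tau \geq 0}$ on $\mathcal H$ with
\[
\|\mathbf S(\tau)\mathbf u\|_{\mathcal H} \leq M e^{(-\frac12 + M\|\mathbf L'\|)\tau} \|\mathbf u\|_{\mathcal H}
\]
for all $\tau \geq 0$ and $\mathbf u \in \mathcal H$, which is exactly \eqref{bpt bound}. It remains only to observe that the domain $\mathcal D(\mathbf L_0)$ of the generator coincides with $\mathcal D(\mathbf L)$ as defined in the excerpt just above the proposition, namely $\mathcal D(\mathbf L) := \mathcal D(\mathbf L_0)$, so there is nothing further to check; and closedness of $(\mathbf L, \mathcal D(\mathbf L))$ was already noted (it also follows since a semigroup generator is automatically closed).

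\medskip

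\textbf{Main obstacle.} In truth there is no substantive obstacle: the entire content has been offloaded into Proposition~\ref{free evolution} (whose proof, deferred to Appendix~\ref{The Free Wave Evolution}, is where the real work lies — constructing the equivalent inner product on $\mathcal H$ à la \cite{GS21} and extracting dissipativity in higher Sobolev norms) and into the boundedness of $\mathbf L'$ on $\mathcal H$. The only point requiring a moment's care is that $\mathbf L'$, although it contains a first-order derivative $\xi^j \partial_j u_1$, is nevertheless bounded on $\mathcal H = H^5_{\mathrm{rad}}(\mathbb B^7) \times H^4_{\mathrm{rad}}(\mathbb B^7)$: the derivative falls on the $u_1$-component, which lives in $H^5$, and the result is measured in the $H^4$-topology of the second slot, with the smooth coefficients $V_1, V_2 \in C^\infty_e[0,1]$ acting as multipliers on a compact set; this is precisely the remark already made in the excerpt, so it may be cited rather than reproved. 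Hence the proof is a one-line invocation of the Bounded Perturbation Theorem.
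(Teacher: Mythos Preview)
Your proposal is correct and follows exactly the paper's approach: the paper simply states that Proposition~\ref{perturbed evolution} is a direct consequence of the Bounded Perturbation Theorem (\cite{EN00}, p.~158, Theorem~1.3), with Proposition~\ref{free evolution} supplying the unperturbed semigroup bound and the prior discussion supplying boundedness of $\mathbf L'$. There is nothing to add.
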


The bound \eqref{bpt bound} is too weak to control the linear evolution since the norm of $\mathbf L'$ and $M$ are large. In fact, decay is not true in general due to the presence of an eigenvalue of $\mathbf L$ with positive real part. Thus, our aim for the rest of this section is to show that, despite this anticipated instability, decay of the semigroup can be obtained on a suitable subspace. For this, we require information on the spectrum of $\mb L$ together with a spectral mapping property. As described in Section \ref{Sec:Outline}, the latter is difficult to obtain in general for perturbations containing derivatives. However, we exploit the following special structural property of the linearized equation to reduce matters to a compactly perturbed problem.

			\begin{proposition}\label{compactification}
				There exists an invertible operator $\mathbf\Gamma\in\mathcal B(\mathcal H)$ and a compact operator $\mathbf V$ on $\mathcal H$ such that
					\begin{equation}
						\mathbf\Gamma(\mathbf L_0+\mathbf L')\mathbf\Gamma^{-1}\mathbf u=(\mathbf L_0+\mathbf V)\mathbf u \label{compactification of L}
					\end{equation}
				for all $\mathbf u\in\mathcal D(\mathbf L_0)$.	
			\end{proposition}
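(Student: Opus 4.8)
The plan is to verify the claimed conjugation identity \eqref{compactification of L} by a direct computation, and then separately argue boundedness/invertibility of $\mathbf\Gamma$ and compactness of $\mathbf V$. Since $\mathbf\Gamma$ acts diagonally in $\xi$ via multiplication by a $2\times 2$ matrix-valued function
\[
\mathbf\Gamma(\xi) = \frac{\sqrt{w(U(|\xi|))}}{4|\xi|^2}
\begin{pmatrix} 1 & 0 \\ -\tfrac12 |\xi|^2 V_2(|\xi|) & 1 \end{pmatrix},
\]
I would first record that its inverse is again such a multiplication operator, with
\[
\mathbf\Gamma(\xi)^{-1} = \frac{4|\xi|^2}{\sqrt{w(U(|\xi|))}}
\begin{pmatrix} 1 & 0 \\ \tfrac12 |\xi|^2 V_2(|\xi|) & 1 \end{pmatrix}.
\]
The first key point is that although the scalar factor $\tfrac{\sqrt{w(U(|\xi|))}}{4|\xi|^2}$ and its reciprocal each appear singular at $\xi = 0$ (recall $w(U(\rho)) = 6\rho^{4}\sin^2(U(\rho))/\rho^{2} = 6\rho^{2}\sin^2 U(\rho)$ with $\alpha = 0,\beta=1,d=5$, so $\sqrt{w(U(\rho))}/\rho^{2}$ behaves like a nonvanishing even smooth function near $0$ because $\sin U(\rho)\sim c\rho$), both $\mathbf\Gamma(\xi)$ and $\mathbf\Gamma(\xi)^{-1}$ are in fact smooth, bounded, and bounded away from zero on $\overline{\mathbb B^7}$; combined with the fact that multiplication by a $C^\infty$ matrix field preserves each $H^k_{\text{rad}}(\mathbb B^7)$, this gives $\mathbf\Gamma,\mathbf\Gamma^{-1}\in\mathcal B(\mathcal H)$. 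Here one uses that $U$ is smooth on $[0,\rho^*]$ with $\rho^* > 1$ and that $\sin U$ has a simple zero at $0$ and no zero on $(0,1]$, so no cancellation issue arises inside the closed light cone.

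Next I would carry out the algebraic heart of the proof: compute $\mathbf\Gamma\mathbf L_0\mathbf\Gamma^{-1}$ and $\mathbf\Gamma\mathbf L'\mathbf\Gamma^{-1}$ on the core $\mathcal D(\mathbf L_0)$ (it suffices to check on smooth functions and then pass to the closure, since $\mathbf\Gamma$ is bounded invertible so it maps $\mathcal D(\mathbf L_0)$ onto itself and conjugation of a closed operator by a bounded invertible operator is closed). Because $\mathbf L_0 = \tilde{\mathbf L}_0$'s closure and $\tilde{\mathbf L}_0$ contains the second-order term $\Delta$ and the first-order Euler term $-\xi^j\partial_j$, conjugating by a multiplication operator will generate new first-order and zeroth-order terms from the product rule. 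The claim \eqref{compactification of L} asserts that all the newly generated first-order (derivative) terms, together with the first-order term $-V_2(|\xi|)\xi^j\partial_j u_1$ coming from $\mathbf L'$, cancel exactly, leaving only a multiplication (zeroth-order) perturbation $\mathbf V$ added to $\mathbf L_0$. This is the precise incarnation, for our specific background $U$ and potentials $V_1,V_2$, of the structural cancellation described in Section \ref{Lin_Skyrme}: the single change of variables $\varphi = \sqrt{w(\Psi)}\phi$ kills both the $\partial_t$- and $\partial_r$-coefficients. Concretely, the off-diagonal lower-triangular entry $-\tfrac12|\xi|^2 V_2(|\xi|)$ in $\mathbf\Gamma$ is chosen precisely to absorb the $V_2(|\xi|)\xi^j\partial_j u_1$ term, while the scalar factor $\sqrt{w(U)}/|\xi|^2$ is chosen so that its logarithmic derivative matches the coefficient dictated by the conservative form of the Skyrme equation. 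I would present this as: define $\mathbf V := \mathbf\Gamma(\mathbf L_0+\mathbf L')\mathbf\Gamma^{-1} - \mathbf L_0$ a priori as an operator with domain $\mathcal D(\mathbf L_0)$, compute it explicitly, and observe that all derivative terms drop out so that $\mathbf V$ is multiplication by a matrix-valued function whose entries are built from $V_1$, $V_2$, $w(U)$, and their first two derivatives — in particular $\mathbf V$ has the form $\mathbf V\mathbf u = (0, \tilde V_1(|\xi|)u_1 + \tilde V_2(|\xi|)u_2)$ for some $\tilde V_1,\tilde V_2\in C^\infty[0,1]$ (one should double-check there is no surviving $u_2$-into-first-component term; the upper row of $\mathbf L_0$ has no $\Delta$ so conjugation there produces only the expected transport terms which cancel against the scalar factor's derivative, leaving the $(1,2)$-entry equal to $1$ as in $\mathbf L_0$).

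Finally, to conclude compactness of $\mathbf V$ on $\mathcal H = H^5_{\text{rad}}(\mathbb B^7)\times H^4_{\text{rad}}(\mathbb B^7)$: since $\mathbf V$ is a bounded multiplication operator by a smooth matrix field and its range lands in the second component, it suffices to note that multiplication by a fixed $C^\infty[0,1]$ function followed by the inclusion $H^5_{\text{rad}}(\mathbb B^7)\hookrightarrow H^4_{\text{rad}}(\mathbb B^7)$ (respectively $H^4\hookrightarrow H^4$ composed with... ) — more carefully, $\mathbf V$ maps $\mathcal H$ into $H^5_{\text{rad}}(\mathbb B^7)\oplus H^5_{\text{rad}}(\mathbb B^7)$-type regularity in the sense that each component of $\mathbf V\mathbf u$ gains one derivative relative to the target space (the first component of $\mathbf V\mathbf u$ is $0$, and the second component is a smooth multiple of $u_1\in H^5$, hence lies in $H^5\hookrightarrow\hookrightarrow H^4$, or of $u_2\in H^4$... ). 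The genuinely compact mechanism is: $\mathbf V\mathbf u = (0,\tilde V_1 u_1 + \tilde V_2 u_2)$ with $\tilde V_1 u_1 \in H^5(\mathbb B^7)$ compactly embedded in $H^4(\mathbb B^7)$ by Rellich–Kondrachov, so the map $\mathbf u\mapsto(0,\tilde V_1 u_1)$ is compact; for the $\tilde V_2 u_2$ piece one must check it also gains regularity or vanishes — I expect the structural cancellation to force $\tilde V_2 \equiv 0$ (indeed, $\mathring V_2 = 2 - \rho\mathring V_1$ and the damping adjustment from $-2$ to $-4$ in $\mathbf L_0$ were arranged precisely so that no surviving $u_2$ multiplication term remains), leaving $\mathbf V\mathbf u = (0,\tilde V_1 u_1)$ which is manifestly compact. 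The main obstacle is precisely this last bookkeeping: carrying the conjugation computation through cleanly enough to see that (i) every derivative term cancels and (ii) the surviving zeroth-order operator only involves $u_1$, not $u_2$ — the cancellation in (i) is guaranteed in principle by the structural property of Section \ref{Lin_Skyrme}, but verifying it requires correctly matching the logarithmic derivative of $\sqrt{w(U(\rho))}/\rho^2$ against the coefficients $\mathring V_1,\mathring V_2$, and this is where an arithmetic slip would be fatal; the smoothness/invertibility of $\mathbf\Gamma$ at the origin is a secondary but real point that hinges on the simple zero of $\sin U$ at $\rho = 0$.
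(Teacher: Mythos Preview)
Your proposal is correct and follows the same approach as the paper: explicit matrix-multiplication conjugation, direct computation on smooth functions yielding $\mathbf V\mathbf u=(0,\tilde V(|\xi|) u_1)$ (indeed $\tilde V_2\equiv 0$ as you anticipate, with $\tilde V(\rho)=-\tfrac{2(9\rho^4+102\rho^2-335)}{(5+3\rho^2)^2}$), and compactness via the Rellich embedding $H^5(\mathbb B^7)\hookrightarrow H^4(\mathbb B^7)$. The paper writes the scalar factor explicitly as $\tfrac{\sqrt{5-|\xi|^2}}{5+3|\xi|^2}$ (your $\sqrt{w(U)}/(4|\xi|^2)$ up to a harmless constant; note $d-1=4$, not $6$) and spells out the passage from $\mathcal D(\tilde{\mathbf L}_0)$ to $\mathcal D(\mathbf L_0)$ via approximating sequences, which is equivalent to your observation that closures commute with bounded invertible conjugation.
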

			\begin{proof}
				For $\mathbf u=(u_1,u_2)\in\mathcal H$ and $\xi\in\overline{\mathbb B^7}$, consider the expression
					$$
						\mathbf\Gamma\mathbf u(\xi):=
							\frac{\sqrt{5-|\xi|^2}}{5+3|\xi|^2}\begin{pmatrix}
								1&0
								\\
								-\frac{1}{2}|\xi|^2V_2(|\xi|)&1
							\end{pmatrix}
							\begin{pmatrix}
								u_1(\xi)
								\\
								u_2(\xi)
							\end{pmatrix}.
					$$
				Observe that this pointwise definition makes sense via the embedding $\mc H\hookrightarrow C^1(\overline{\mathbb B^7})\times C(\overline{\mathbb B^7})$. A direct calculation shows that $\mathbf\Gamma\in\mathcal B(\mathcal H)$ and that it is invertible with inverse given by
					$$
						\mathbf\Gamma^{-1}\mathbf u(\xi)=
							\frac{5+3|\xi|^2}{\sqrt{5-|\xi|^2}}\begin{pmatrix}
								1&0
								\\
								\frac{1}{2}|\xi|^2V_2(|\xi|)&1
							\end{pmatrix}
							\begin{pmatrix}
								u_1(\xi)
								\\
								u_2(\xi)
							\end{pmatrix}.
					$$	
				For $\mathbf u\in C^\infty_\text{rad}(\overline{\mathbb B^7}) \times C^\infty_\text{rad}(\overline{\mathbb B^7})$, a direct calculation verifies that the equation
					\begin{equation}
						\mathbf\Gamma(\tilde{\mathbf L}_0+\mathbf L')\mathbf\Gamma^{-1}\mathbf u=(\tilde{\mathbf L}_0+\mathbf V)\mathbf u \label{open compactification of L}
					\end{equation}
				holds with 
					$$
						\mathbf V\mathbf u(\xi):=
							\begin{pmatrix}
								0
								\\
								\tilde{V}(|\xi|)u_1(\xi)
							\end{pmatrix},\;\;\;
						\tilde  V(\r)=-\frac{2(9\rho^4+102\rho^2-335)}{(5+3\rho^2)^2}.
					$$
				Note that $\tilde  V\in C_e^\infty[0,1]$ and that this implies $\mathbf V\in\mathcal B(\mathcal H)$. Compactness of $\mathbf V$ then follows from compactness of the embedding $H^5(\mathbb B^7)\hookrightarrow H^4(\mathbb B^7)$. By Proposition \ref{free evolution} and boundedness of $\mathbf V$, we infer closedness of the operator $\mathbf L_\mathbf V:=\mathbf L_0+\mathbf V$ with domain $\mathcal D(\mathbf L_\mathbf V):=\mathcal D(\mathbf L_0)\subset\mathcal H$.
					
				We now show that $\mathbf u\in\mathcal D(\mathbf L_0)$ if and only if $\mathbf\Gamma^{-1}\mathbf u\in\mathcal D(\mathbf L_0)$. As a consequence, Equation \eqref{compactification of L} follows. To that end, suppose $\mathbf u\in\mathcal D(\mathbf L_0)$, i.e., there is a sequence $\{\mathbf u_n\}_{n\in\mathbb N}\subset\mathcal D(\tilde{\mathbf L}_0)$ for which $\mathbf u_n\to \mathbf u$ and $\tilde{\mathbf L}_0\mathbf u_n \to  \mathbf L_0\mathbf u$ in $\mc H$. In particular, $\{\tilde{\mathbf L}_0\mathbf u_n\}$ is Cauchy in $\mathcal H$ and, since $\mathbf V\in\mathcal B(\mathcal H)$, we have also that $\{(\tilde{\mathbf L}_ 0+\mathbf V) \mb u_n\}_{n\in\mathbb N}$ is Cauchy in $\mathcal H$ with its limit defining the expression $\mathbf L_\mathbf V\mathbf u$. By a direct calculation, we see that $\{\mathbf\Gamma^{-1}\mathbf u_n\}_{n\in\mathbb N}\subset C^\infty_ \text{rad}(\overline{\mathbb B^7})\times C^\infty_\text{rad}(\overline{\mathbb B^7})$ and, by continuity, $\mathbf\Gamma^{-1}\mathbf u_n\to\mathbf\Gamma^{-1}\mathbf u$ in $\mathcal H$. Rearranging Equation \eqref{open compactification of L} yields
					\[
						\tilde{\mathbf L}\mathbf\Gamma^{-1}\mathbf u_n=\mathbf\Gamma^{-1}\tilde{\mathbf L}_\mathbf V\mathbf u_n\to\mathbf\Gamma^{-1}\mathbf L_\mathbf V\mathbf u
					\]
				with the limit being taken in $\mathcal H$. Thus, the sequence $\{\tilde{\mathbf L}\mathbf\Gamma^{-1}\mathbf u_n\}_{n\in\mathbb N}$ converges in $\mathcal H$ and since $\mb L$ is closed, we infer that $\mathbf\Gamma^{-1}\mathbf u\in\mathcal D(\mathbf L) = \mathcal D(\mathbf L_0)$. The converse is established analogously.
			\end{proof}
			
By Proposition \ref{free evolution} and another application of the Bounded Perturbation Theorem, we infer  that $\mathbf L_\mathbf V$ generates a semigroup on $\mc H$, which we denote by $\big(\mathbf S_\mathbf V(\t)\big)_{\t\geq0}$, and satisfies
			$$
				\|\mathbf{S}_\mathbf V(\tau)\mathbf u \|_\mathcal{H}\leq M'e^{(-\frac{1}{2}+M'\|\mathbf V\|)\tau}\|\mathbf u\|_\mathcal H\label{Vbpt bound}
			$$
		for all $\tau\geq0$, $\mathbf u \in\mathcal H$, and some $M' \geq 1$. As immediate corollaries of Proposition \ref{compactification}, we obtain the following two crucial results.
		
			\begin{corollary} \label{Similar Semigroup}
We have
\begin{equation}
		\mathbf\Gamma\mathbf S(\t)\mathbf\Gamma^{-1} = \mathbf S_\mathbf V(\t) \label{similar semigroup}
					\end{equation}
for all $ \tau \geq 0$.
			\end{corollary}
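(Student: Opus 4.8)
The plan is to deduce the intertwining identity \eqref{similar semigroup} from the operator identity \eqref{compactification of L} by a standard semigroup-uniqueness argument. Since $\mathbf\Gamma\in\mathcal B(\mathcal H)$ is invertible, the family $\bigl(\mathbf\Gamma\mathbf S(\tau)\mathbf\Gamma^{-1}\bigr)_{\tau\geq 0}$ is automatically a strongly continuous one-parameter semigroup of bounded operators on $\mathcal H$: the algebraic semigroup law $\mathbf\Gamma\mathbf S(\tau+\sigma)\mathbf\Gamma^{-1}=\mathbf\Gamma\mathbf S(\tau)\mathbf\Gamma^{-1}\mathbf\Gamma\mathbf S(\sigma)\mathbf\Gamma^{-1}$ is immediate, $\mathbf\Gamma\mathbf S(0)\mathbf\Gamma^{-1}=I$, and strong continuity at $\tau=0$ follows from strong continuity of $(\mathbf S(\tau))_{\tau\geq 0}$ together with boundedness of $\mathbf\Gamma$ and $\mathbf\Gamma^{-1}$. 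By the uniqueness of the generator, it therefore suffices to show that its generator coincides with $\mathbf L_{\mathbf V}=\mathbf L_0+\mathbf V$, since $\mathbf L_{\mathbf V}$ generates $(\mathbf S_{\mathbf V}(\tau))_{\tau\geq 0}$ by the remark following Proposition \ref{compactification}.

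The key step is thus to identify the generator of $\bigl(\mathbf\Gamma\mathbf S(\tau)\mathbf\Gamma^{-1}\bigr)_{\tau\geq 0}$. A direct computation shows that for a conjugation by a bounded invertible operator, the generator is the similarity transform of the original generator: its domain is $\mathbf\Gamma\,\mathcal D(\mathbf L)$ and on that domain it acts as $\mathbf\Gamma\mathbf L\mathbf\Gamma^{-1}$. Concretely, for $\mathbf u\in\mathbf\Gamma\,\mathcal D(\mathbf L)$ we have $\mathbf\Gamma^{-1}\mathbf u\in\mathcal D(\mathbf L)$, so $\tau^{-1}\bigl(\mathbf S(\tau)\mathbf\Gamma^{-1}\mathbf u-\mathbf\Gamma^{-1}\mathbf u\bigr)\to\mathbf L\mathbf\Gamma^{-1}\mathbf u$ in $\mathcal H$ as $\tau\to 0^+$, and applying the bounded operator $\mathbf\Gamma$ gives $\tau^{-1}\bigl(\mathbf\Gamma\mathbf S(\tau)\mathbf\Gamma^{-1}\mathbf u-\mathbf u\bigr)\to\mathbf\Gamma\mathbf L\mathbf\Gamma^{-1}\mathbf u$; conversely, if the difference quotient converges then applying $\mathbf\Gamma^{-1}$ shows $\mathbf\Gamma^{-1}\mathbf u\in\mathcal D(\mathbf L)$, so the domain is exactly $\mathbf\Gamma\,\mathcal D(\mathbf L)$. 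Now Proposition \ref{compactification} — specifically the established fact that $\mathbf u\in\mathcal D(\mathbf L_0)$ if and only if $\mathbf\Gamma^{-1}\mathbf u\in\mathcal D(\mathbf L_0)$, together with $\mathcal D(\mathbf L)=\mathcal D(\mathbf L_0)=\mathcal D(\mathbf L_{\mathbf V})$ — yields $\mathbf\Gamma\,\mathcal D(\mathbf L)=\mathcal D(\mathbf L_0)=\mathcal D(\mathbf L_{\mathbf V})$, and \eqref{compactification of L} gives $\mathbf\Gamma\mathbf L\mathbf\Gamma^{-1}\mathbf u=\mathbf L_{\mathbf V}\mathbf u$ for all $\mathbf u$ in this common domain. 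Hence the generator of $\bigl(\mathbf\Gamma\mathbf S(\tau)\mathbf\Gamma^{-1}\bigr)_{\tau\geq 0}$ equals $\mathbf L_{\mathbf V}$, and by uniqueness of the semigroup generated by a given operator, $\mathbf\Gamma\mathbf S(\tau)\mathbf\Gamma^{-1}=\mathbf S_{\mathbf V}(\tau)$ for all $\tau\geq 0$.

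There is no serious obstacle here; this is essentially the well-known fact that similar generators generate similar semigroups, and the only thing one must be slightly careful about is the bookkeeping of domains, which has already been done in the proof of Proposition \ref{compactification}. If one prefers to avoid invoking the generator-uniqueness theorem, an equivalent route is to fix $\mathbf u\in\mathcal D(\mathbf L_{\mathbf V})$ and differentiate: both $\tau\mapsto\mathbf\Gamma\mathbf S(\tau)\mathbf\Gamma^{-1}\mathbf u$ and $\tau\mapsto\mathbf S_{\mathbf V}(\tau)\mathbf u$ are $C^1$ solutions of the abstract Cauchy problem $\partial_\tau\mathbf v(\tau)=\mathbf L_{\mathbf V}\mathbf v(\tau)$, $\mathbf v(0)=\mathbf u$, so they agree by uniqueness of mild/classical solutions; since $\mathcal D(\mathbf L_{\mathbf V})$ is dense in $\mathcal H$ and both families are uniformly bounded on compact $\tau$-intervals, the identity extends to all of $\mathcal H$. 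Either way the corollary follows immediately.
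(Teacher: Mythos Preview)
Your proof is correct and follows essentially the same approach as the paper: show that $\bigl(\mathbf\Gamma\mathbf S(\tau)\mathbf\Gamma^{-1}\bigr)_{\tau\geq 0}$ is a $C_0$-semigroup whose generator is $\mathbf\Gamma\mathbf L\mathbf\Gamma^{-1}=\mathbf L_{\mathbf V}$, and then invoke uniqueness of the semigroup determined by its generator. You are somewhat more explicit about the domain bookkeeping than the paper, but the argument is the same.
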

			\begin{proof}
				Observe that for $\mathbf u\in\mathcal D(\mathbf L)$ and $\tau > 0$,
					\begin{align*}
						\frac{\mathbf\Gamma\mathbf S(\t)\mathbf\Gamma^{-1}\mathbf u-\mathbf u}{\t}=\frac{\mathbf\Gamma(\mathbf S(\t)\mathbf-1)\mathbf\Gamma^{-1}\mathbf u}{\t}\to_{\t\to0^+}\mathbf\Gamma\mathbf L\mathbf\Gamma^{-1}\mathbf u=\mathbf L_\mathbf V\mathbf u.
					\end{align*}
				This shows that $\mathbf L_\mathbf V$ generates the semigroup $\big(\mathbf\Gamma\mathbf S(\t)\mathbf\Gamma^{-1}\big)_{\t\geq0}$. However, $\mathbf L_\mathbf V$ generates the semigroup $\big(\mathbf S_\mathbf V(\t)\big)_{\t\geq0}$. By Theorem 1.4, p. 51 of \cite{EN00}, semigroups are uniquely determined by their generator. Thus, Equation \eqref{similar semigroup} holds.
			\end{proof}
			
			\begin{corollary} \label{isospectral}
				We have  $\sigma(\mathbf L)=\sigma(\mathbf L_\mathbf V)$. In particular, if $\la\in\sigma_p(\mathbf L)$ with eigenfunction $\mathbf f$, then $\la\in\sigma_p(\mathbf L_\mathbf V)$ with eigenfunction $\mathbf\Gamma\mathbf f$. Conversely, if $\la\in\sigma_p(\mathbf L_\mathbf V)$ with eigenfunction $\mathbf f$, then $\la\in\sigma_p(\mathbf L)$ with eigenfunction $\mathbf\Gamma^{-1}\mathbf f$. 
			\end{corollary}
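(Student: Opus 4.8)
The plan is to treat Corollary \ref{isospectral} as a routine consequence of the similarity relation furnished by Proposition \ref{compactification}, the key extra input being the domain identity $\mathbf\Gamma\,\mathcal D(\mathbf L_0)=\mathcal D(\mathbf L_0)$ which is established (as the statement ``$\mathbf u\in\mathcal D(\mathbf L_0)$ if and only if $\mathbf\Gamma^{-1}\mathbf u\in\mathcal D(\mathbf L_0)$'') in the course of proving that proposition. First I would record the operator identity
\[
\lambda-\mathbf L_\mathbf V=\mathbf\Gamma(\lambda-\mathbf L)\mathbf\Gamma^{-1}\qquad\text{on }\mathcal D(\mathbf L_0),
\]
valid for every $\lambda\in\mathbb C$: this is immediate from Equation \eqref{compactification of L} upon subtracting $\mathbf\Gamma\mathbf L\mathbf\Gamma^{-1}\mathbf u=\mathbf L_\mathbf V\mathbf u$ from $\lambda\mathbf u=\mathbf\Gamma(\lambda\mathbf\Gamma^{-1}\mathbf u)$, using $\mathbf\Gamma,\mathbf\Gamma^{-1}\in\mathcal B(\mathcal H)$ and $\mathcal D(\mathbf L)=\mathcal D(\mathbf L_\mathbf V)=\mathcal D(\mathbf L_0)$.

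Next I would prove $\rho(\mathbf L)\subseteq\rho(\mathbf L_\mathbf V)$ together with the resolvent formula. Fix $\lambda\in\rho(\mathbf L)$ and set $B:=\mathbf\Gamma\,R_\mathbf L(\lambda)\,\mathbf\Gamma^{-1}\in\mathcal B(\mathcal H)$. For $\mathbf u\in\mathcal D(\mathbf L_0)$ one has $\mathbf\Gamma^{-1}\mathbf u\in\mathcal D(\mathbf L)$ by the domain identity, so
\[
B(\lambda-\mathbf L_\mathbf V)\mathbf u=\mathbf\Gamma\,R_\mathbf L(\lambda)\,(\lambda-\mathbf L)\,\mathbf\Gamma^{-1}\mathbf u=\mathbf\Gamma\mathbf\Gamma^{-1}\mathbf u=\mathbf u,
\]
and for $\mathbf v\in\mathcal H$ we have $R_\mathbf L(\lambda)\mathbf\Gamma^{-1}\mathbf v\in\mathcal D(\mathbf L)$, hence $B\mathbf v\in\mathbf\Gamma\,\mathcal D(\mathbf L)=\mathcal D(\mathbf L_0)=\mathcal D(\mathbf L_\mathbf V)$ and
\[
(\lambda-\mathbf L_\mathbf V)B\mathbf v=\mathbf\Gamma(\lambda-\mathbf L)\mathbf\Gamma^{-1}\mathbf\Gamma\,R_\mathbf L(\lambda)\mathbf\Gamma^{-1}\mathbf v=\mathbf\Gamma(\lambda-\mathbf L)R_\mathbf L(\lambda)\mathbf\Gamma^{-1}\mathbf v=\mathbf v.
\]
Thus $\lambda\in\rho(\mathbf L_\mathbf V)$ with $R_{\mathbf L_\mathbf V}(\lambda)=\mathbf\Gamma R_\mathbf L(\lambda)\mathbf\Gamma^{-1}$. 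Since, by Proposition \ref{compactification} and the same domain identity, one also has $\lambda-\mathbf L=\mathbf\Gamma^{-1}(\lambda-\mathbf L_\mathbf V)\mathbf\Gamma$ on $\mathcal D(\mathbf L_0)$, the reverse inclusion $\rho(\mathbf L_\mathbf V)\subseteq\rho(\mathbf L)$ follows by the identical argument with the roles of $\mathbf\Gamma$ and $\mathbf\Gamma^{-1}$ interchanged. Therefore $\rho(\mathbf L)=\rho(\mathbf L_\mathbf V)$, i.e. $\sigma(\mathbf L)=\sigma(\mathbf L_\mathbf V)$.

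Finally, for the eigenfunction correspondence, suppose $\lambda\in\sigma_p(\mathbf L)$ with eigenfunction $\mathbf f\in\mathcal D(\mathbf L)\setminus\{\mathbf 0\}$. Then $\mathbf\Gamma\mathbf f\in\mathbf\Gamma\,\mathcal D(\mathbf L)=\mathcal D(\mathbf L_\mathbf V)$, it is nonzero by injectivity of $\mathbf\Gamma$, and
\[
(\lambda-\mathbf L_\mathbf V)\mathbf\Gamma\mathbf f=\mathbf\Gamma(\lambda-\mathbf L)\mathbf\Gamma^{-1}\mathbf\Gamma\mathbf f=\mathbf\Gamma(\lambda-\mathbf L)\mathbf f=\mathbf 0,
\]
so $\lambda\in\sigma_p(\mathbf L_\mathbf V)$ with eigenfunction $\mathbf\Gamma\mathbf f$; the converse is the same computation applied to $\mathbf\Gamma^{-1}$. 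I do not anticipate a genuine obstacle here: the entire argument is bookkeeping around the bounded similarity $\mathbf\Gamma$, and the only point demanding any care is keeping track of domains — specifically invoking $\mathbf\Gamma\,\mathcal D(\mathbf L)=\mathcal D(\mathbf L)$ from the proof of Proposition \ref{compactification} so that the candidate resolvent $\mathbf\Gamma R_\mathbf L(\lambda)\mathbf\Gamma^{-1}$ actually maps into $\mathcal D(\mathbf L_\mathbf V)$ and the operator identity above is read on the correct domain.
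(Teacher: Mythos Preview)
Your proof is correct and follows the same route as the paper: both derive the claim directly from the similarity relation \eqref{compactification of L} together with the domain identity $\mathbf\Gamma\,\mathcal D(\mathbf L_0)=\mathcal D(\mathbf L_0)$ established in the proof of Proposition \ref{compactification}. The paper is simply terser, asserting that \eqref{compactification of L} ``implies the first claim'' without spelling out the resolvent construction you carry out in detail.
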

			\begin{proof}
				Equation \eqref{compactification of L} implies the first claim. Now suppose that $\la\in\sigma_p(\mathbf L)$ and that $\mathbf f\in\mathcal D(\mathbf L)\setminus\{\mathbf 0\}$ is any associated eigenfunction. Then, again by Equation \eqref{compactification of L}
					$$
						(\la-\mathbf L_\mathbf V)\mathbf\Gamma\mathbf f=\mathbf\Gamma(\la-\mathbf L)\mathbf f=\mathbf 0.
					$$ 
				Since $\mathbf\Gamma\mathbf f\in\mathcal D(\mathbf L_\mathbf V)\setminus\{\mathbf 0\}$, it follows that $\la\in\sigma_p(\mathbf L_\mathbf V)$ and $\mathbf\Gamma\mathbf f$ is an eigenfunction. The converse follows mutatis mutandis.
			\end{proof}

\subsection{Estimates for the time evolution described by $\mb L_{\mb V}$}\label{Estimates for the time evolution described by L_V}

\subsubsection{Spectral analysis}\label{Spectral analysis}

For $\omega  \in \R$, we define
\[ \overline{\mathbb H}_{\omega} := \{ \lambda \in \C: \mathrm{Re} \lambda \geq \omega\}\]
and write $ \overline{\mathbb H}:= \overline{\mathbb H}_0$. We have the following characterization.

\begin{proposition}\label{Le:Spectum_Struct}
Let $\lambda \in \sigma(\mathbf L_\mathbf V)\cap\overline{\mathbb H}_{-\frac14}$. Then $\lambda$ is an isolated eigenvalue of finite algebraic multiplicity. Moreover, there exist $C, K > 0$ such that 
\[ \| \mb R_{\mb L_{\mb V}}(\lambda) \mb u \|_{\mc H} \leq  C\| \mb u \|_{\mc H} \]
for all $\lambda \in \overline{\mathbb H}_{-\frac14}$ with $|\lambda| \geq K$ and all $\mb u \in \mc H$. In particular, the 
set $\sigma_p (\mathbf L_\mathbf V)\cap \overline{\mathbb H}_{-\frac14}$ is finite. 
\end{proposition}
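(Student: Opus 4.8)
The plan is to exploit Proposition~\ref{compactification}, which writes $\mathbf L_{\mathbf V} = \mathbf L_0 + \mathbf V$ with $\mathbf V$ a \emph{compact} operator on $\mathcal H$, together with the exponential decay bound \eqref{Bound_S0_intro} for the semigroup $(\mathbf S_0(\tau))_{\tau \geq 0}$ generated by $\mathbf L_0$. The first step is the standard spectral perturbation argument: since $\mathbf V$ is compact, it is relatively compact with respect to $\mathbf L_0$, and hence $\sigma(\mathbf L_0 + \mathbf V) \setminus \sigma(\mathbf L_0)$ consists of isolated eigenvalues of finite algebraic multiplicity (this is the analytic Fredholm alternative for $\lambda \mapsto \mathbf I - \mathbf V \mathbf R_{\mathbf L_0}(\lambda)$ on any connected component of $\rho(\mathbf L_0)$ where it is not identically zero). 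Because the growth bound \eqref{Bound_S0_intro} gives $\|\mathbf S_0(\tau)\| \lesssim e^{-\tau/2}$, the Hille--Yosida bound yields $\overline{\mathbb H}_{-\frac14} \subset \rho(\mathbf L_0)$ with a uniform resolvent bound $\|\mathbf R_{\mathbf L_0}(\lambda)\|_{\mathcal B(\mathcal H)} \leq M/(\mathrm{Re}\,\lambda + \tfrac12) \leq 4M$ there. This establishes the ``isolated eigenvalue of finite algebraic multiplicity'' claim; it remains only to rule out the degenerate case where $\mathbf I - \mathbf V \mathbf R_{\mathbf L_0}(\lambda)$ is non-invertible on \emph{all} of $\overline{\mathbb H}_{-\frac14}$, which is exactly what the uniform resolvent bound for large $|\lambda|$ will do.

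The heart of the proposition is the uniform resolvent bound for $|\lambda| \geq K$. The key identity is the second resolvent (Birman--Schwinger) formula: for $\lambda \in \rho(\mathbf L_0)$, $\lambda \in \rho(\mathbf L_{\mathbf V})$ if and only if $\mathbf I - \mathbf V \mathbf R_{\mathbf L_0}(\lambda)$ is invertible on $\mathcal H$, in which case
\[
\mathbf R_{\mathbf L_{\mathbf V}}(\lambda) = \mathbf R_{\mathbf L_0}(\lambda)\big(\mathbf I - \mathbf V \mathbf R_{\mathbf L_0}(\lambda)\big)^{-1}.
\]
So I would prove that $\|\mathbf V \mathbf R_{\mathbf L_0}(\lambda)\|_{\mathcal B(\mathcal H)} \to 0$ as $|\lambda| \to \infty$ with $\mathrm{Re}\,\lambda \geq -\tfrac14$; then for $|\lambda| \geq K$ a Neumann series gives $\|(\mathbf I - \mathbf V \mathbf R_{\mathbf L_0}(\lambda))^{-1}\| \leq 2$, and combined with $\|\mathbf R_{\mathbf L_0}(\lambda)\| \leq 4M$ one gets $\|\mathbf R_{\mathbf L_{\mathbf V}}(\lambda) \mathbf u\|_{\mathcal H} \leq 8M \|\mathbf u\|_{\mathcal H} =: C\|\mathbf u\|_{\mathcal H}$. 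To see that $\|\mathbf V \mathbf R_{\mathbf L_0}(\lambda)\| \to 0$, recall that $\mathbf V$ acts as multiplication by the bounded function $\tilde V$ into the second slot, so $\mathbf V$ factors through the compact embedding $H^5(\mathbb B^7) \hookrightarrow H^4(\mathbb B^7)$: it suffices to show that $\mathbf R_{\mathbf L_0}(\lambda)$, viewed as a map into $H^5_{\mathrm{rad}} \times H^4_{\mathrm{rad}}$, tends to zero in the relevant operator topology as $|\lambda| \to \infty$. The clean way to get this is: $\mathbf R_{\mathbf L_0}(\lambda) = \int_0^\infty e^{-\lambda \tau} \mathbf S_0(\tau)\, d\tau$ shows $\mathbf R_{\mathbf L_0}(\lambda) \to 0$ strongly as $\mathrm{Re}\,\lambda \to \infty$; combined with the uniform bound $\|\mathbf R_{\mathbf L_0}(\lambda)\| \leq 4M$ on $\overline{\mathbb H}_{-\frac14}$ and compactness of $\mathbf V$ (compact operators turn strong convergence on bounded sets into norm convergence after composition), one upgrades to $\|\mathbf V \mathbf R_{\mathbf L_0}(\lambda)\| \to 0$ as $\mathrm{Re}\,\lambda \to +\infty$. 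For the region with $\mathrm{Re}\,\lambda$ bounded but $|\mathrm{Im}\,\lambda|$ large, one instead uses the explicit structure of $\mathbf L_0$ as a (damped, rescaled) wave operator: the resolvent equation $(\lambda - \mathbf L_0)\mathbf u = \mathbf f$ reduces, in radial coordinates, to a second-order ODE in $\rho \in [0,1]$ with a large parameter $\lambda$, and standard ODE asymptotics / energy estimates give that the $H^5 \times H^4 \to H^5 \times H^4$ norm of $\mathbf R_{\mathbf L_0}(\lambda)$ composed with the smoothing embedding decays like $|\lambda|^{-1}$; hence again $\|\mathbf V \mathbf R_{\mathbf L_0}(\lambda)\| \lesssim |\lambda|^{-1} \to 0$.

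Once the uniform bound on $\{|\lambda| \geq K\} \cap \overline{\mathbb H}_{-\frac14}$ is in hand, the final claim that $\sigma_p(\mathbf L_{\mathbf V}) \cap \overline{\mathbb H}_{-\frac14}$ is finite follows immediately: that region is closed and the part with $|\lambda| \geq K$ lies in $\rho(\mathbf L_{\mathbf V})$, so the spectrum there is contained in the \emph{compact} set $\{|\lambda| \leq K\} \cap \overline{\mathbb H}_{-\frac14}$; being a set of isolated points (from the first step) inside a compact set, it is finite. The main obstacle I anticipate is the large-$|\mathrm{Im}\,\lambda|$ resolvent estimate for $\mathbf L_0$: the semigroup integral representation handles $\mathrm{Re}\,\lambda \to +\infty$ cheaply, but controlling the resolvent uniformly for $\mathrm{Re}\,\lambda \in [-\tfrac14, A]$ and $|\mathrm{Im}\,\lambda| \to \infty$ requires genuinely analyzing the rescaled wave equation — this is precisely the place where one needs the damped/self-similar structure of $\mathbf L_0$ and careful ODE bounds in the higher Sobolev norm $\mathcal H = \mathcal H^5$, and it is the reason the compactification of Proposition~\ref{compactification} (removing the derivative term from $\mathbf L'$) was essential: without it, $\mathbf V$ would not be compact and the Birman--Schwinger reduction would fail.
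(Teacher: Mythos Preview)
Your overall architecture---analytic Fredholm for the first claim, Birman--Schwinger plus a Neumann series for the resolvent bound---is exactly the paper's. But two points deserve attention.

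First, the sentence ``compact operators turn strong convergence on bounded sets into norm convergence after composition'' is false in the order you need it. If $T_n\to 0$ strongly with $\sup_n\|T_n\|<\infty$ and $K$ is compact, one does \emph{not} in general get $\|K T_n\|\to 0$ (left shifts on $\ell^2$ with $K$ a rank-one projection give a counterexample); only $\|T_n K\|\to 0$ is automatic. Fortunately this lemma is unnecessary: the Hille--Yosida bound already gives $\|\mathbf R_{\mathbf L_0}(\lambda)\|\le M/(\mathrm{Re}\,\lambda+\tfrac12)\to 0$ in \emph{norm} as $\mathrm{Re}\,\lambda\to+\infty$, so that regime is trivial.

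Second, and more substantively, the case $\mathrm{Re}\,\lambda\in[-\tfrac14,A]$, $|\mathrm{Im}\,\lambda|\to\infty$ is where the work lies, and you have only asserted that ``standard ODE asymptotics / energy estimates'' yield $|\lambda|^{-1}$ decay after smoothing. The paper's argument here is short and worth knowing: since $\mathbf V\mathbf u=(0,\tilde V u_1)$ depends only on $u_1$ measured in $H^4$, one has
\[
\|\mathbf V\mathbf R_{\mathbf L_0}(\lambda)\mathbf f\|_{\mathcal H}\lesssim \big\|[\mathbf R_{\mathbf L_0}(\lambda)\mathbf f]_1\big\|_{H^4(\mathbb B^7)}.
\]
Now read off the \emph{first row} of $(\lambda-\mathbf L_0)\mathbf u=\mathbf f$, namely $(\lambda+1)u_1+\xi^j\partial_j u_1-u_2=f_1$, solve for $u_1$, and take the $H^4$ norm:
\[
\|u_1\|_{H^4}\le \frac{1}{|\lambda+1|}\big(\|\xi^j\partial_j u_1\|_{H^4}+\|u_2\|_{H^4}+\|f_1\|_{H^4}\big)\lesssim \frac{1}{|\lambda+1|}\|\mathbf f\|_{\mathcal H},
\]
using the uniform bound $\|\mathbf R_{\mathbf L_0}(\lambda)\|_{\mathcal H\to\mathcal H}\le 4M$ on $\overline{\mathbb H}_{-1/4}$. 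This gives $\|\mathbf V\mathbf R_{\mathbf L_0}(\lambda)\|\lesssim|\lambda+1|^{-1}$ directly, with no ODE asymptotics needed. The point is that the one-derivative loss in the first row is absorbed by the one-derivative gain in passing from $H^5$ to $H^4$, which is precisely what the structure of $\mathbf V$ affords.
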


\begin{proof}
The proof is based on standard arguments using the compactness of $\mb V$ along with the identity
\begin{align}\label{Decomp}
 \lambda - \mb L_{\mb V} = (1 - \mb V \mb R_{\mb L_0}(\lambda))(\lambda - \mb L_0)
\end{align}
and the properties of $\mb L_0$. The first part of the statement is an immediate consequence of the analytic Fredholm Theorem. The resolvent estimates, again based on \eqref{Decomp}, are proved by a Neumann series argument using the fact that 
\begin{equation} \|\mb V \mb R_{\mb L_0}(\lambda) \mb f  \|_{\mc H} \lesssim \| [\mb R_{\mb L_0}(\lambda) \mb f]_{1} \|_{H^{4}(\B^7)}\label{useful inequality} \end{equation}
for all $\mb f  \in \mc H$. More precisely, the identity $(\lambda - \mb L_0)\mb R_{\mb L_0}(\lambda) \mb f = \mb f$ implies
\begin{equation} \xi^{i} \partial_i [\mb R_{\mb L_0}(\lambda) \mb f]_{1}(\xi) + (\lambda+1) [\mb R_{\mb L_0}(\lambda) \mb f]_{1}(\xi) - [\mb R_{\mb L_0}(\lambda) \mb f]_{2}(\xi) = f_1(\xi).\label{comp12 relation} \end{equation}
Using the uniform boundedness of $\mb R_{\mb L_0}(\lambda)$, which is a consequence of Proposition \ref{free evolution} under the above assumptions on $\lambda$, we infer that 
\begin{align*}
\| [\mb R_{\mb L_0}(\lambda) \mb f]_{1} \|_{H^{4}(\B^7)} &  \lesssim \frac{1}{|\lambda+1|} \left (\| [\mb R_{\mb L_0}(\lambda) \mb f]_{1} \|_{H^{5}(\B^7)}  + \| [\mb R_{\mb L_0}(\lambda) \mb f]_{2 } \|_{H^{4}(\B^7)} + \|f_1 \|_{H^{4}(\B^7)}  \right )  \\
& \lesssim \frac{1}{|\lambda+1|} \| \mb f\|_{\mc H}.
\end{align*}
Hence,  $\|\mb V \mb R_{\mb L_0}(\lambda) \mb f \|_{\mc H} \leq \frac{1}{2} \|\mb f \|_{\mc H}$
 for all $\lambda \in \overline{\mathbb H}_{-\frac14}$ with $|\lambda| > K$ and $K > 0$ sufficiently large. Now, 
 $\mb R_{\mb L_{\mb V}}(\lambda) =  \mb R_{\mb L_0}(\lambda) \sum_{k=0}^{\infty} [\mb V \mb R_{\mb L_0}(\lambda)]^k$, which implies  the claimed estimate.
\end{proof}

\begin{remark}
In view of Proposition \ref{compactification} we immediately obtain uniform bounds for the resolvent $\mb R_{\mb L}(\lambda)$ of the original operator $\mb L = \mb L_0 + \mb L'$, which is crucial in proving bounds on the linear time evolution using spectral properties of its generator. We emphasize that it is not obvious how to obtain such bounds without exploiting the reduction provided by Proposition \ref{compactification}. Crucial to the previous argument are \eqref{useful inequality} and \eqref{comp12 relation}. Since the first component of the resolvent is measured in $H^4(\mathbb B^7)$ in \eqref{useful inequality} and this is the level of regularity for which the second component of the resolvent is controlled, we can use Equation \eqref{comp12 relation} to gain the desired decay in $\la$. In contrast, observe that
	$$
		\|\mb L' \mb R_{\mb L_0}(\lambda) \mb f  \|_{\mc H} \lesssim \| [\mb R_{\mb L_0}(\lambda) \mb f]_{1} \|_{H^{5}(\B^7)}+ \| [\mb R_{\mb L_0}(\lambda) \mb f]_{2} \|_{H^{4}(\B^7)}.
	$$
Clearly, the first component of the resolvent is measured in $H^{5}(\B^7)$ here and, as a consequence, Equation \eqref{comp12 relation} is of no use.
\end{remark}

We proceed by analyzing the spectrum of $\mb L_{\mb V}$ (equivalently of $\mb L$). The following lemma shows that the question of spectral stability can be reduced to an ODE problem. 
		
		\begin{lemma} \label{point spectrum and mode solutions}
			Let $\la\in\sigma(\mathbf L_\mathbf V)\cap\overline{\mathbb H}$. Then there exists a nonzero $f\in C^\infty[0,1]$ such that
				\begin{align}
				\begin{split}
					-(1-\rho^2)f''(\rho)-\Big(\tfrac{6}{\rho}-2(\lambda+3)\rho\Big)f'(\rho)+\Big((\lambda+1)(\lambda+4)-\tilde V(\rho)\Big)f(\rho)=0. \label{spectral equation}
				\end{split}
				\end{align}
		\end{lemma}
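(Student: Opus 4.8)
The plan is to convert the statement ``$\lambda \in \sigma(\mathbf L_\mathbf V)$'' into the existence of a nontrivial eigenfunction, and then to reduce the eigenvalue equation, written in terms of radial representatives, to the scalar second-order ODE \eqref{spectral equation}. The first point is supplied by what precedes: by Proposition \ref{Le:Spectum_Struct}, every $\lambda \in \sigma(\mathbf L_\mathbf V) \cap \overline{\mathbb H}_{-1/4}$, in particular every $\lambda \in \sigma(\mathbf L_\mathbf V)\cap\overline{\mathbb H}$, is an isolated eigenvalue of finite algebraic multiplicity, so there is $\mathbf u = (u_1,u_2) \in \mathcal D(\mathbf L_0) \setminus \{\mathbf 0\}$ with $(\lambda - \mathbf L_\mathbf V)\mathbf u = \mathbf 0$. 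Hence I may work with $(\lambda - \mathbf L_0 - \mathbf V)\mathbf u = \mathbf 0$ directly.

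Next I would write this system out componentwise using radial representatives. Abbreviating $D := \xi^j\partial_j$, which on a radial function becomes $\rho\,\partial_\rho$, the two equations are
\[
(\lambda + 1 + D)u_1 - u_2 = 0, \qquad -\Delta u_1 + (\lambda + 4 + D)u_2 - \tilde V u_1 = 0 .
\]
From the first equation, $u_2 = (\lambda + 1 + \rho\partial_\rho)u_1$; substituting into the second gives a single closed ODE for $f := u_1$. One then needs the elementary identities $\Delta_{\mathrm{rad}} = \partial_\rho^2 + \tfrac{6}{\rho}\partial_\rho$ and the commutator relation $\rho\partial_\rho(\rho\partial_\rho f) = \rho\partial_\rho f + \rho^2 f''$ to collect terms. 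The $\partial_\rho^2$-coefficient comes out to be $-(1 - \rho^2)$: the $-\partial_\rho^2$ from $-\Delta$, plus the $+\rho^2\partial_\rho^2$ produced by iterating $D$ on $u_2$. Carrying the bookkeeping through for the first-order and zeroth-order terms yields precisely
\[
-(1-\rho^2)f'' - \Big(\tfrac{6}{\rho} - 2(\lambda+3)\rho\Big)f' + \Big((\lambda+1)(\lambda+4) - \tilde V(\rho)\Big)f = 0,
\]
which is \eqref{spectral equation}. Smoothness of $f$ on $[0,1]$ follows from the fact that $\mathbf u \in \mathcal D(\mathbf L_0) = \mathcal D(\mathbf L)$ is a closure of smooth elements and that, near $\rho = 1$, the ODE is regular (the singular points $\rho = 0$ and $\rho = 1$ are where elliptic regularity plus the even structure of $\mathcal H$ forces $C^\infty$ behavior); alternatively one invokes the Frobenius/regular-singular-point analysis already referenced in the outline, together with the membership $\mathbf u \in \mathcal H = H^5 \times H^4$ to exclude the singular solution branch. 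One must also check $f \not\equiv 0$: if $f = u_1 \equiv 0$ then $u_2 = (\lambda+1+\rho\partial_\rho)0 = 0$, contradicting $\mathbf u \neq \mathbf 0$.

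The main obstacle is not the algebra — which is a mechanical if slightly lengthy substitution — but the regularity claim $f \in C^\infty[0,1]$. The domain $\mathcal D(\mathbf L_0)$ is defined only as an abstract closure, so one has to argue that a distributional $H^5$ solution of \eqref{spectral equation} is in fact smooth up to and including the endpoints $\rho = 0$ and $\rho = 1$, where the coefficients degenerate. At $\rho = 0$ this is handled by the radial structure (an $H^5_{\mathrm{rad}}(\mathbb B^7)$ function has an even $C^\infty$ representative near $0$ by Lemma 2.1 of \cite{G22a}-type results, combined with elliptic regularity away from the degeneracy); at $\rho = 1$ one uses that $\rho=1$ is a regular singular point of the ODE with indicial roots that, given the $H^5$ membership, select the analytic solution. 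I would state this as a consequence of the standard Frobenius analysis and defer the detailed indicial-root computation to where the spectral analysis is carried out in full.
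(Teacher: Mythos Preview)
Your approach is essentially the paper's: invoke Proposition~\ref{Le:Spectum_Struct} to obtain an eigenfunction, reduce the first-order system to the scalar ODE~\eqref{spectral equation} by eliminating $u_2$, and then upgrade the $H^5$ solution to $C^\infty[0,1]$ via a Frobenius analysis at the two regular singular points. The algebra and the nontriviality argument are fine.

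The one point to tighten is the endpoint regularity, which you correctly flag as the obstacle but then partly mishandle. Your first suggestion at $\rho=0$ (``an $H^5_{\mathrm{rad}}(\mathbb B^7)$ function has an even $C^\infty$ representative near $0$ by Lemma~2.1 of \cite{G22a}-type results'') does not work: that lemma is about $C^\infty_{\mathrm{rad}}$ functions, not $H^5$ functions, and elliptic regularity is unavailable precisely at the degenerate point $\rho=0$. Your alternative, the Frobenius route, is the correct one and is exactly what the paper does --- explicitly. At $\rho=0$ the indices are $\{0,-5\}$, and the $\rho^{-5}$ branch (possibly with a $\log\rho$ term) is excluded by $H^5_{\mathrm{rad}}$ membership. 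At $\rho=1$ the indices are $\{0,1-\lambda\}$, and the paper splits into three cases ($\lambda\in\{0,1\}$; $\lambda-1\in\mathbb N$; $1-\lambda\notin\mathbb N_0$) to show that the $(1-\rho)^{1-\lambda}$ or logarithmic branch is again incompatible with $H^5$ for $\Re\lambda\ge 0$. This computation should not be deferred: it is short, and it is the content of the lemma.
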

		\begin{proof}

Suppose $\la\in\sigma(\mathbf L_\mathbf V)\cap\overline{\mathbb H}$. By Proposition \ref{Le:Spectum_Struct}, $\lambda$ is an eigenvalue. Thus, there exists $\mathbf f=(f_{1},f_{2})\in\mathcal D(\mathbf L_\mathbf V)\setminus\{\mathbf 0\}$ such that $(\la-\mathbf L)\mathbf f_\la=\mathbf 0$. This implies that the radial representatives of $f_{1}$ and $f_{2}$, denoted by $\hat f_1$ and $\hat f_2$ respectively, solve
				$$
					\begin{cases}
						\rho\hat f_{1}'(\rho)+(\lambda+1)\hat f_{1}(\rho)-\hat f_{2}(\rho)=0
						\\
						-\hat f_{1}''(\rho)-\frac{6}{\rho}\hat f_{1}'(\rho)+\rho\hat f_{2}'(\rho)+(\lambda+4)\hat f_{2}(\rho)-\tilde V(\rho)\hat f_{1}(\rho)-\hat f_{1}'(\rho)=0
					\end{cases}
				$$
			on the interval $(0,1)$. Using the first equation to solve for $\hat f_{2}$ in terms of $\hat f_{1}$ and its derivative, we find that $\hat f_{1}$ solves Equation \eqref{spectral equation} on the interval $(0,1)$. Since the coefficients are smooth on $(0,1)$, we have $\hat f_{1}\in C^\infty(0,1)$. To see that we have smoothness up to the endpoints, we perform a Frobenius analysis of Equation \eqref{spectral equation}. We begin at the regular singular point $\r=1$ where the Frobenius indices are $\{0,1-\la\}$. There are three cases to consider:\\
			
\textit{Case 1} ($\la=0$ or $\la=1$): In this case, Equation \eqref{spectral equation} has a fundamental system of the form
				\[
					v^{+}_1(\r;\la)=(1-\r)^{1-\la}h_1(\r;\la),\;\;\;v_1^{-}(\r;\la)=h_2(\r;\la)+c\log(1-\r)v^{+}_1(\r;\la),
				\]
				where $c\in\mathbb C$,  and $h_1(\cdot;\la), h_2(\cdot;\la)$ are analytic in a neighborhood of $\r=1$ with $h_1(1;\la) = h_2(1;\la)=1$. Since $f_{1}\in H^5_\text{rad}(\mathbb B^7)$, either $c=0$ or $f_{1}=c_1 v^{+}_1(\cdot;\la)$ for some $c_1\in\mathbb C$. In either case, $f_{1}\in C^\infty(0,1]$.\\
			
			\textit{Case 2} ($\la-1\in\mathbb N_0$ and $\Re\la>1$): Similar to Case 1, Equation \eqref{spectral equation} has a fundamental system of the form
\[
					v^{+}_1(\r;\la)=h_1(\r;\la),\;\;\;v_1^{-}(\r;\la)=(1-\r)^{1-\la}h_2(\r;\la)+c\log(1-\r)v_1^{+}(\r;\la)
\]
			where $c\in\mathbb C$ and $h_1(\cdot;\la),h_2(\cdot;\la)$ analytic in a neighborhood of $\r=1$ with $h_1(1;\la) = h_2(1;\la)=1$. However, due to $1-\la\leq-1$ in this case, we immediately conclude $f_{\la,1}=c_1 v_1^{+}(\cdot;\la)$ for some $c_1\in\mathbb C$ which implies $f_{1}\in C^\infty(0,1]$.\\
			
			\textit{Case 3} ($1-\la\not\in\mathbb N_0$): In this case, Equation \eqref{spectral equation} admits a fundamental system of the form
				$$
					v_1^{+}(\r;\la)= h_1(\r;\la),\;\;\;v_1^{-}(\r;\la)=(1-\r)^{1-\la}h _2(\r;\la)
				$$
			with $h_1(\cdot;\la),h_2(\cdot;\la)$  analytic in a neighborhood of $\r=1$ and $h_1(1;\la) = h_2(1;\la)=1$. We infer that  $f_{1}=c_1 h_1(\cdot;\la)$ for some $c_1\in\mathbb C$ which implies $f_{1}\in C^\infty(0,1]$.\\

We conclude by proving smoothness at $\r=0$. Observe that at $\rho =0$ the Frobenius indices are $\{0,-5\}$ and thus Equation \eqref{spectral equation} admits a fundamental system
				$$
					v^+_1(\r;\la)=h_1(\r;\la),\;\;\;v^-_2(\r;\la)=\r^{-5}h_2(\r;\la)+c\log(\r)h_1(\r;\la)
				$$
			where $c\in\mathbb C$, $h_1(\cdot;\la),h_2(\cdot;\la)$ are analytic in a neighborhood of $\r=0$, and $h_1(0;\la)=h_2(0;\la)=1$. Again, since $f_{\la,1}\in H^5_\text{rad}(\mathbb B^7)$, we must have $f_{1}(\cdot)=c_1v^+_1(\cdot;\la)$ for some $c_1\in\mathbb C$ which implies $f_{1}\in C^\infty[0,1]$.
		\end{proof}
		
Hence, in order to characterize the spectrum of $\mb L_{\mb V}$ in the right half plane, we define the set
			\[
				\Sigma:=\{\lambda\in\mathbb C:\Re\lambda\geq0\text{ and }\exists f(\cdot;\lambda) \in C^\infty[0,1]\text{ solving Equation }\eqref{spectral equation}\text{ on }(0,1)\}.
			\]
			\begin{proposition}\label{mode stability}
				We have
					\[
						\Sigma=\{1\}
					\]
				with unique, up to a constant multiple, solution $f(\rho;1)=(5+3\rho^2)^{-2}$.
			\end{proposition}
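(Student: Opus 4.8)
The plan is to show that $\Sigma = \{1\}$ by analyzing Equation~\eqref{spectral equation} directly. First, I would verify the easy inclusion: plugging $f(\rho;1) = (5+3\rho^2)^{-2}$ into \eqref{spectral equation} with $\lambda = 1$ and the explicit $\tilde V(\rho) = -2(9\rho^4 + 102\rho^2 - 335)/(5+3\rho^2)^2$ should give an identity after clearing denominators — a routine but finite computation. This confirms $1 \in \Sigma$. (The appearance of this eigenvalue is also forced a priori by time-translation symmetry of the self-similar problem, so its presence is expected.)

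The hard part is ruling out all other $\lambda$ with $\Re\lambda \geq 0$. Following the approach of \cite{CDGH15,G18}, I would remove the known eigenfunction by the substitution $f(\rho) = (5+3\rho^2)^{-2} g(\rho)$ (or a closely related normalized version), which turns \eqref{spectral equation} into a new second-order ODE for $g$ whose coefficients depend on $\lambda$, and for which $g \equiv \text{const}$ solves the $\lambda = 1$ case. The goal is then to show that for every $\lambda \in \overline{\mathbb H} \setminus \{1\}$ this reduced equation has \emph{no} solution that is simultaneously analytic at $\rho = 0$ and sufficiently regular (in $H^5$, equivalently smooth by the Frobenius analysis of Lemma~\ref{point spectrum and mode solutions}) at $\rho = 1$. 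Concretely, one normalizes a solution $g(\cdot;\lambda)$ by its behavior at the regular singular point $\rho = 0$ (the index-$0$ Frobenius solution, analytic with $g(0) = 1$), continues it to $\rho = 1$, and examines the coefficient of the singular branch $(1-\rho)^{1-\lambda}$ (or the logarithmic term) in its expansion there; spectral points correspond to zeros of this connection coefficient, viewed as an analytic function of $\lambda$. One shows this coefficient is nonvanishing on $\overline{\mathbb H} \setminus \{1\}$.

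The main obstacle — and the step where the argument is genuinely problem-specific rather than formal — is establishing this nonvanishing. I expect this to be carried out by a combination of: (i) an \emph{a priori} argument excluding a half-plane $\Re\lambda$ large (e.g. a quadratic-form/energy estimate applied to the ODE on $(0,1)$, or a Rellich-type identity, showing any smooth solution must vanish), which reduces the problem to a compact region of $\lambda$-space; and (ii) on the remaining compact region, either an explicit closed-form solution of the reduced equation in terms of hypergeometric functions — the coefficients $6/\rho$, $2(\lambda+3)\rho$, and the rational $\tilde V$ suggest the equation is hypergeometric after a change of variable $x = \rho^2$ — together with the known connection formulae for ${}_2F_1$, or a rigorous quasi-solution / Sturm-comparison argument as in \cite{CDG17}. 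I would bet the authors exploit the hypergeometric structure: substituting $x = \rho^2$ should bring \eqref{spectral equation} (after the removal of the $\lambda=1$ solution) to a Gauss hypergeometric equation with parameters depending affinely on $\lambda$, whereupon the absence of nontrivial smooth solutions for $\lambda \neq 1$ follows from the non-termination and non-cancellation of the relevant hypergeometric series / the explicit form of the connection coefficient. The one subtlety to watch is the boundary case $\Re\lambda = 0$ and small $|\lambda|$, and the resonant cases $\lambda - 1 \in \mathbb N_0$ flagged in Lemma~\ref{point spectrum and mode solutions}, which may need to be handled separately by a limiting argument or a direct inspection of the logarithmic terms.
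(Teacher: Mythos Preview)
Your verification of the inclusion $1\in\Sigma$ is fine and matches the paper. The core of your plan, however, rests on a wrong structural guess: the equation does \emph{not} reduce to a Gauss hypergeometric equation. The potential $\tilde V(\rho)=-2(9\rho^4+102\rho^2-335)/(5+3\rho^2)^2$ introduces an additional regular singular point at $\rho^2=-5/3$, so after any M\"obius change in $\rho^2$ you are left with four regular singularities --- a Heun equation, for which no closed-form connection coefficients are available. Consequently the ``explicit connection formulae for ${}_2F_1$'' route cannot be carried out, and your main proposed mechanism for excluding $\lambda\neq1$ breaks down.

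What the paper actually does is the alternative you mention only in passing, namely the quasi-solution/recurrence method of \cite{CDGH15,CDG17,G18}, but implemented quite concretely. The authors do \emph{not} factor out the $\lambda=1$ eigenfunction; instead they use the change of variables $x=8\rho^2/(5+3\rho^2)$, $f=(8-3x)^{(\lambda+3)/2}y$, which maps the singularities to $\{0,1,8/3,\infty\}$ and produces a Heun equation whose Frobenius solution at $x=0$ has a three-term recurrence $a_{n+2}=A_n(\lambda)a_{n+1}+B_n(\lambda)a_n$. Poincar\'e's theorem forces $r_n=a_{n+1}/a_n$ to converge to $1$ or to $3/8$; the entire proof consists in ruling out the limit $3/8$ for all $\lambda\in\overline{\mathbb H}\setminus\{1\}$ by constructing an explicit approximant $\tilde r_n(\lambda)$, proving $|r_n/\tilde r_n-1|\le 1/4$ for $n\ge 20$ via elementary inequalities on the imaginary axis extended by Phragm\'en--Lindel\"of. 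There is no separate large-$\Re\lambda$ energy estimate and no compact-region splitting --- the bounds are uniform in $\lambda$ from the outset. Your proposal, as written, does not supply this machinery, and the hypergeometric shortcut you bet on is unavailable.
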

			\begin{proof}
				By direct computation, one sees that $f(\rho;1)$ solves Equation \eqref{spectral equation} with $\lambda=1$. To see the reverse inclusion we show that Equation \eqref{spectral equation} does not admit solutions which are smooth on $[0,1]$ for $\lambda \in \C$, $\mathrm{Re}\lambda \geq 0$ and $\lambda \neq 1$. We begin by transforming Equation \eqref{spectral equation} into a more tractable form. More precisely, we introduce a new independent variable 
					$$
						x=\frac{8\rho^2}{5+3\rho^2}
					$$
				and new dependent variable $y(x;\la)$ defined by the equation
					$$
						f(\rho; \lambda)=:(8-3x)^{\frac{\lambda+3}{2}}y(x; \lambda)
					$$
				which transforms Equation \eqref{spectral equation} into one of Heun type, namely
					\begin{align}
					\begin{split}
						y''(x; \lambda)&+\frac{3(\lambda+5)x^2-(8\lambda+43)x+28}{x(1-x)(8-3x)}y'(x; \lambda)  
						\\
						&+\frac{(\lambda-1)\big(3(\lambda+9)x-5\lambda-51\big)}{4x(1-x)(8-3x)}y(x; \lambda)=0 \label{new spectral eqn}
					\end{split}
					\end{align}
				for $x\in(0,1)$. The solution $f(\cdot;1)$ transforms into $y(\cdot;1)=1$ up to a multiplicative constant. Observe that this transformation preserves smoothness on $[0,1]$, i.e. $f(\cdot; \lambda) \in C^\infty[0,1]$ if and only if  $y(\cdot;\lambda) \in C^\infty[0,1]$. Now, Frobenius theory implies that any smooth solution of the last equation is also analytic on $[0,1]$. We aim to show that $y(\cdot;\lambda)$ must fail to be analytic at $x=1$ unless $\lambda= 1$.
				
The Frobenius indices at $x=0$ are $\{0,-\frac{5}{2}\}$. Without loss of generality, a smooth solution around zero can be written as
					\begin{equation}
						y(x;\lambda)=\sum_{n=0}^\infty a_n(\lambda)x^n,\;\;\;a_0(\lambda)=1 \label{series}
					\end{equation}
				near $x=0$. Now the finite regular singular points of the above Heun equation are $\{0,1,\tfrac{8}{3}\}$. Thus, $y(\cdot;\lambda)$ fails to be analytic at $x=1$ precisely when the radius of convergence of the series \eqref{series} is equal to one, which is what we prove in the following. 
				
By inserting \eqref{series} into Equation \eqref{new spectral eqn}, we find that a recurrence relation for the coefficients $a_n(\lambda)$ is given by
					\begin{equation}
						a_{n+2}(\lambda)=A_n(\lambda)a_{n+1}(\lambda)+B_n(\lambda)a_n(\lambda) \label{recurrence relation}
					\end{equation}
				where
					$$
						A_n(\lambda)=\frac{44n^2+8n(4\lambda+27)+\lambda(5\lambda+78)+121}{16(n+2)(2n+9)}
					$$
					$$
						B_n(\lambda)=-\frac{3(\lambda+2n-1)(\lambda+2n+9)}{16(n+2)(2n+9)}
					$$
				and $a_{-1}(\lambda)=0$, and $a_0(\lambda)=1$. We define
					$$
						r_n(\lambda):=\frac{a_{n+1}(\lambda)}{a_n(\lambda)}.
					$$
				Since $\lim_{n\to\infty}A_n(\lambda)=\frac{11}{8}$, $\lim_{n\to\infty}B_n(\lambda)=-\frac{3}{8}$, the so-called characteristic equation of Equation \eqref{recurrence relation} is 
					$$
						t^2-\tfrac{11}{8}t+\tfrac{3}{8}=0.
					$$
				Solutions of this equation are given by $t_1=\dfrac{3}{8}$ and $t_2=1$. By Poincar\'e's theorem on difference equations (see Theorem 8.9 on p. 343 of \cite{E05} or Appendix A of \cite{GS21}) we conclude that either $a_n(\lambda)=0$ eventually in $n$, 
					\begin{equation}
						\lim_{n\to\infty}r_n(\lambda)=1 \label{potential limit 1}
					\end{equation}
				or
					\begin{equation}
						\lim_{n\to\infty}r_n(\lambda)=\tfrac{3}{8} \label{potential limit 2}.
					\end{equation}
				In fact, $a_n(\lambda)$ cannot go to zero eventually in $n$ since backwards substitution would imply $a_0(\lambda)=0$ which is a contradiction. More precisely, suppose there exists $N\in\N$ such that $a_n(\la)=0$ for all $n\geq N$. Since the zeros of $B_n(\la)$ are negative, we can divide by $B_n(\la)$ to obtain that $a_{N-1}(\la)=0$. Iterating this procedure yields the contradiction. So, we show that Equation \eqref{potential limit 2} cannot hold.
				
				By plugging Equation \eqref{recurrence relation} into the definition of $r_n(\lambda)$, we derive a recurrence relation for $r_n(\lambda)$ given by
					$$
						r_{n+1}(\lambda)=A_n(\lambda)+\frac{B_n(\lambda)}{r_n(\lambda)}
					$$
				with initial condition
					$$
						r_0(\lambda)=\frac{(\lambda-1)(51+5\lambda)}{112}.
					$$
				We define an approximation to $r_n(\lambda)$ given by
					$$
						\tilde{r}_n(\lambda):=\frac{5\lambda^2}{16(n+1)(2n+7)}+\frac{(16n+23)\lambda}{8(n+1)(2n+7)}+\frac{n+3}{n+1}.
					$$
				The quadratic and linear in $\lambda$ terms are obtained by studying the large $|\lambda|$ behavior of $A_n(\lambda)$ while the constant term is put in by hand in order to mimic the small $|\lambda|$ behavior of the first few iterates of $r_n(\lambda)$. Observe that $\lim_{n\to\infty}\tilde r_n(\lambda)=1$. The approximation $\tilde{r}_n(\lambda)$ is intended to behave like $r_n(\lambda)$ for sufficiently large $n$. To show that this is indeed true, we define the quantity 
					$$
						\delta_n(\lambda):=\frac{r_n(\lambda)}{\tilde{r}_n(\lambda)}-1
					$$
				and derive a recurrence relation for it given by 
					$$
						\delta_{n+1}(\lambda)=\varepsilon_n(\lambda)-C_n(\lambda)\frac{\delta_n(\lambda)}{1+\delta_n(\lambda)}
					$$
				where
					$$
						\varepsilon_n(\lambda):=\frac{A_n(\lambda)\tilde{r}_n(\lambda)+B_n(\lambda)}{\tilde{r}_n(\lambda)\tilde{r}_{n+1}(\lambda)}-1
					$$
				and
					$$
						C_n(\lambda):=\frac{B_n(\lambda)}{\tilde{r}_n(\lambda)\tilde{r}_{n+1}(\lambda)},
					$$
				by again plugging the recurrence relation for $r_n(\lambda)$ into the definition of $\delta_n(\lambda)$. Direct calculation shows that we have the following explicit expressions for $\varepsilon_n(\lambda)$ and $C_n(\lambda)$ given by
\[
C_n(\lambda)=\frac{P_1(n;\lambda)}{P_2(n;\lambda)}, \quad 
						\varepsilon_n(\lambda)=\frac{P_3(n;\lambda)}{P_2(n;\lambda)} \]
				where
					$$
						P_1(n;\lambda):=-48(1+n)(7+2n)(-1+2n+\lambda)(9+2n+\lambda),
					$$
					\begin{align*}
						P_2(n;\lambda):=&\big(336+32n^2+16n(13+2\lambda)+\lambda(46+5\lambda)\big) 
   						\\
   						&\times\big(576+32n^2+16n(17+2\lambda)+\lambda(78+5\lambda)\big),
					\end{align*}
				and
					\begin{align*}
						P_3(n;\lambda):=&-32(7+2n)(669+322n+40n^2)+2\big(11809+4n(2742+467n)\big)\lambda
						\\
						&+\big(2611+4n(178+9n)\big)\lambda^2.
					\end{align*}
				By direct calculation, we see that for $\lambda\in\overline{\mathbb{H}}\setminus\{1\}$, $\varepsilon_n(\lambda)\to0$ and $C_n(\lambda)\to-\tfrac{3}{8}$ as $n\to\infty$.
				
				Now, for $\lambda\in\overline{\mathbb{H}}\setminus\{1\}$, we have the following estimates
				
					\begin{align}
					\begin{split}
						&|\delta_{20}(\lambda)|\leq\tfrac{1}{4},\quad |C_n(\lambda)|\leq\tfrac{3}{8}, \quad |\varepsilon_n(\lambda)|\leq\tfrac{1}{8} \label{bounds}
					\end{split}
					\end{align}
for $n\geq 20$. We discuss the proof of the second estimate since the other two are obtained by the same argument. First, we establish the desired estimate on the imaginary line. Then we can extend the estimate to $\overline{\mathbb H}$ via the Phragm\'en-Lindel\"of principle so long as $C_n(\lambda)$ is analytic and polynomially bounded there. So, observe that for $t\in\mathbb R$, the inequality $|C_n(it)|\leq\frac{3}{8}$ is equivalent to the inequality $64|P_1(n,it)|^2-9|P_2(n,it)|^2\leq0$. For $t\in\mathbb R$ and $n\geq20$, a direct calculation shows that the coefficients of $64|P_1(n,it)|^2-9|P_2(n,it)|^2$ are manifestly negative which establishes the desired estimate on the imaginary line. Now, we aim to extend the estimate to all of $\overline{\mathbb{H}}$. As $C_n(\lambda)$ is a rational function of polynomials in $\mathbb Z[n,\lambda]$, it is polynomially bounded. Furthermore, a direct calculation of the zeros of $P_2(n,\lambda)$ shows that they are contained in $\mathbb C\setminus\overline{\mathbb H}$ implying the analyticity of $C_n(\lambda)$ in $\overline{\mathbb{H}}$. Thus, the Phragm\'en-Lindel\"of principle extends the estimate to all of $\overline{\mathbb{H}}$. 
					
				With these bounds in hand, we can prove the same bound for $\delta_n$, $n>20$ by induction. Suppose the estimate holds for some $k>20$. Then
					$$
						|\delta_{k+1}(\lambda)|\leq\tfrac{1}{8}+\tfrac{3}{8}\tfrac{\frac{1}{4}}{1-\frac{1}{4}}=\tfrac{1}{4}
					$$
				by the triangle inequality, Equation \eqref{bounds}, and the induction hypothesis. This bound on $\delta_n(\lambda)$ is now sufficient to exclude Equation \eqref{potential limit 2}. To see this, suppose to the contrary that Equation \eqref{potential limit 2} holds. Then
					$$
						\tfrac{1}{4}\geq|\delta_n(\lambda)|=\Big|1-\tfrac{r_n(\lambda)}{\tilde{r}_n(\lambda)}\Big|\to_{n\to\infty}\tfrac{5}{8}
					$$
				which is clearly a contradiction. Thus, Equation \eqref{potential limit 1} must hold and so $y(\cdot;\la)$ fails to be analytic at $x=1$.
		\end{proof}

		\begin{proposition} \label{point spectrum of evolution}
		There is an $\omega_0 >0$ such that 
				$$
					\sigma(\mathbf{L}_\mathbf V)\subseteq\{\lambda\in\mathbb{C}:\Re\lambda \leq -\omega_0 \}\cup\{1\}.
				$$
Furthermore, the eigenvalue $1$ has a one-dimensional eigenspace, i.e., $\ker(1-\mathbf{L}_\mathbf V)=\langle\mathbf f_1^*\rangle$ where
				$$
					\mathbf f_1^*(\xi)=
						\begin{pmatrix}
							f_{1,1}^*(\xi)
							\\
							f_{1,2}^*(\xi)
						\end{pmatrix}:=
						\begin{pmatrix}
							f(|\xi|;1)
							\\
							|\xi| f'(|\xi|;1)+2f(|\xi|;1)
						\end{pmatrix}.
				$$
		\end{proposition}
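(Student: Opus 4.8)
The plan is to assemble three facts already in hand: the mode stability statement of Proposition~\ref{mode stability}, the reduction of the eigenvalue problem to the singular ODE~\eqref{spectral equation} carried out in Lemma~\ref{point spectrum and mode solutions}, and the finiteness and isolation of the spectrum in $\overline{\mathbb H}_{-\frac14}$ from Proposition~\ref{Le:Spectum_Struct}. I would proceed in three steps: locate the spectrum in the closed right half-plane, upgrade this to a quantitative gap, and then identify the eigenspace at $\lambda=1$.

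\emph{Step 1: $\sigma(\mathbf L_\mathbf V)\cap\overline{\mathbb H}=\{1\}$.} If $\lambda\in\sigma(\mathbf L_\mathbf V)\cap\overline{\mathbb H}$ then by Proposition~\ref{Le:Spectum_Struct} it is an eigenvalue, so Lemma~\ref{point spectrum and mode solutions} yields a nonzero $f(\cdot;\lambda)\in C^\infty[0,1]$ solving~\eqref{spectral equation} on $(0,1)$; hence $\lambda\in\Sigma$, and Proposition~\ref{mode stability} forces $\lambda=1$. Conversely, $1$ is a genuine eigenvalue: with $f(\rho;1)=(5+3\rho^2)^{-2}$, which is smooth, even, and strictly positive, the vector $\mathbf f_1^*$ defined in the statement has both components in $C^\infty_\text{rad}(\overline{\mathbb B^7})\subset\mathcal D(\tilde{\mathbf L}_0)\subset\mathcal D(\mathbf L_0)=\mathcal D(\mathbf L_\mathbf V)$, is nonzero, and satisfies $(1-\mathbf L_\mathbf V)\mathbf f_1^*=\mathbf 0$. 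Indeed, on the smooth domain $\mathbf L_\mathbf V$ acts as $\tilde{\mathbf L}_0+\mathbf V$, and running the computation in the proof of Lemma~\ref{point spectrum and mode solutions} backwards, the first component of $(1-\mathbf L_\mathbf V)\mathbf f_1^*$ vanishes by the very choice $f_{1,2}^*=|\cdot|f'(|\cdot|;1)+2f(|\cdot|;1)$ (this is the first equation of the system, with $\lambda+1=2$), while the second component vanishes precisely because $f(\cdot;1)$ solves~\eqref{spectral equation}.

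\emph{Step 2: the spectral gap.} By Proposition~\ref{Le:Spectum_Struct}, $\sigma(\mathbf L_\mathbf V)\cap\overline{\mathbb H}_{-\frac14}$ is a finite set of isolated eigenvalues; by Step~1 exactly one of them, $\lambda=1$, lies in $\overline{\mathbb H}$, so the rest form a finite (possibly empty) set $E\subset\{\lambda:-\tfrac14\le\Re\lambda<0\}$. Since $E$ is finite and $-\Re\lambda>0$ for each $\lambda\in E$, the number $\omega_0:=\min\big(\{\tfrac14\}\cup\{-\Re\lambda:\lambda\in E\}\big)$ is strictly positive. Now any $\lambda\in\sigma(\mathbf L_\mathbf V)$ with $\lambda\neq1$ satisfies either $\Re\lambda<-\tfrac14\le-\omega_0$ (when $\lambda\notin\overline{\mathbb H}_{-\frac14}$) or $\lambda\in E$, hence $\Re\lambda\le-\omega_0$; this is the claimed inclusion. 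For the eigenspace, let $\mathbf f=(f_1,f_2)\in\ker(1-\mathbf L_\mathbf V)$. By the proof of Lemma~\ref{point spectrum and mode solutions}, the radial representative $\hat f_1$ solves~\eqref{spectral equation} with $\lambda=1$ and is smooth on all of $[0,1]$, so by the uniqueness clause of Proposition~\ref{mode stability} we have $\hat f_1=c\,f(\cdot;1)$ for some $c\in\mathbb C$; the first equation of the system then gives $\hat f_2(\rho)=\rho\hat f_1'(\rho)+2\hat f_1(\rho)=c\,f_{1,2}^*(\rho)$, so $\mathbf f=c\,\mathbf f_1^*$ and $\ker(1-\mathbf L_\mathbf V)=\langle\mathbf f_1^*\rangle$.

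The genuine difficulty has already been absorbed by the preceding results: Proposition~\ref{mode stability}, whose Frobenius/Poincaré difference-equation analysis excludes \emph{all} of $\overline{\mathbb H}\setminus\{1\}$ rather than only a compact portion, and Proposition~\ref{Le:Spectum_Struct}, which rules out accumulation of eigenvalues near the imaginary axis via the large-$|\lambda|$ resolvent bound. The only point in the present argument needing a little care is the passage from ``spectrum in the open left half-plane together with $\{1\}$'' to a \emph{uniform} gap $\omega_0$; this is soft, but it genuinely requires the finiteness in Proposition~\ref{Le:Spectum_Struct}, since a priori eigenvalues could pile up against the imaginary axis from the left.
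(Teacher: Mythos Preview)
Your proof is correct and follows the same overall strategy as the paper: combine Proposition~\ref{Le:Spectum_Struct}, Lemma~\ref{point spectrum and mode solutions}, and Proposition~\ref{mode stability} to locate the spectrum, then use the finiteness in $\overline{\mathbb H}_{-\frac14}$ to extract a uniform gap $\omega_0>0$. Your Step~2 makes the gap argument explicit, which the paper only sketches in one line.

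The one genuine difference is in how $\ker(1-\mathbf L_\mathbf V)=\langle\mathbf f_1^*\rangle$ is established. You invoke the Frobenius analysis inside the proof of Lemma~\ref{point spectrum and mode solutions} to conclude that any eigenfunction has smooth first component, then appeal to the uniqueness clause of Proposition~\ref{mode stability}. The paper instead writes down the second linearly independent solution $g_1$ of~\eqref{spectral equation} at $\lambda=1$ explicitly (see~\eqref{Def:g1}) and observes that its logarithmic singularity at $\rho=1$ excludes it from $H^5_\text{rad}(\mathbb B^7)$. Your route is cleaner and avoids a computation, but the paper's explicit formula for $g_1$ is not wasted effort: it is reused in the proof of Proposition~\ref{projection} to show that the algebraic and geometric multiplicities of the eigenvalue $1$ coincide, via a variation-of-parameters argument that requires the full fundamental system $\{f(\cdot;1),g_1\}$ and its Wronskian.
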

		\begin{proof}
			Direct calculation shows that $\mathbf f_1^*\in\mathcal D(\tilde{\mathbf L}_0)$ and that $(1-\mathbf L_\mathbf V)\mathbf f_1^*=\mathbf 0$. Lemma \ref{Le:Spectum_Struct}, Equation \eqref{spectral equation} and Proposition \ref{mode stability}  imply the inclusion.
			
			To see that the eigenspace is one-dimensional and spanned by $\mathbf f_1^*$, suppose $\mathbf u=(u_1,u_2)\in\ker(1-\mathbf L_\mathbf V)$. Direct calculation shows that the equation $(1-\mathbf{L}_\mathbf V)\mathbf{u}=0$ implies that the radial representative of $u_1$ solves the ODE
					\begin{equation}
							-(1-\rho^2)\hat u_1''(\rho)- \Big(\tfrac{6}{\rho}-8\rho\Big)\hat u_1'(\rho)+\Big(10-\tilde V(\rho)\Big)\hat u_1(\rho)=0 \label{spectral 1 eqn}
					\end{equation}
				for $\rho\in(0,1)$ with its second component given by $\hat u_2(\rho)=\rho\hat u_1'(\rho)+2\hat u_1(\rho)$. From our previous calculations, we know that $f(\cdot;1)$ from Proposition \ref{mode stability}  solves Equation \eqref{spectral 1 eqn}. A second linearly independent solution is given explicitly by
\begin{align}\label{Def:g1}
						g_1(\rho):=\frac{375+2125\rho^2+10425\rho^4+243\rho^6+6144\rho^5\log(1-\rho)-6144\rho^5 \log(1+\rho)}{3\rho^5\big(5+3\rho^2\big)^2}.
\end{align}
				Thus, the general solution of Equation \eqref{spectral 1 eqn} is given by
					$$
						u_1(\rho)=c_1f(\r;1)+c_2g_1(\rho)
					$$
				for constants $c_1,c_2,\in\mathbb C$. However, the general solution fails to be in the Sobolev space $H^5_\text{rad}(\mathbb{B}^7)$ unless $c_2=0$ due to the logarithmic behavior at $\r=1$. Thus, $\ker(1-\mathbf{L}_\mathbf V)\subseteq\langle\mathbf f_1^*\rangle$.
\end{proof}

\subsubsection{Semigroup bounds}
		
Since $\lambda = 1$ is an isolated eigenvalue, we can define the corresponding Riesz projection.

\begin{definition}
				Let $\gamma:[0,2\pi]\to\mathbb C$ be defined by $\gamma(t)=1+\frac{1}{2}e^{it}$. Then we set
					$$
						\mathbf P_\mathbf V:=\frac{1}{2\pi i}\int_\gamma\mathbf{R}_{\mathbf L_\mathbf V}(\lambda)d\lambda.
					$$
			\end{definition}

			\begin{proposition} \label{projection}
The projection $\mathbf P_\mathbf V$ commutes with $\big(\mathbf{S}_\mathbf V(\tau)\big)_{\tau\geq0}$ for all $\tau \geq 0$. Furthermore,  $\range\mathbf P_\mathbf V=\langle\mathbf f_1^*\rangle$ and for any $\mathbf u \in\mathcal H$ and all $\tau\geq0$
\begin{equation}
\mathbf{S}_\mathbf V(\tau)\mathbf P_\mathbf V\mathbf{u}=e^\tau\mathbf P_\mathbf V\mathbf{u}. \label{growth on projection}
\end{equation} 
Finally, there exists $\omega>0$ and $C\geq1$ such that
\begin{equation}
									\|\mathbf S_\mathbf V(\t)(1-\mathbf P_\mathbf V)\mathbf u\|_\mathcal H\leq Ce^{-\omega \t}\|(1-\mathbf P_\mathbf V)\mathbf u\|_\mathcal H \label{decay on stable subspace}
\end{equation}
for any $\mathbf u \in\mathcal H$ and all $\tau\geq0$.
			\end{proposition}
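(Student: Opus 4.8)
The plan is to dispatch the algebraic assertions first (commutation, the identification of $\range\mathbf P_\mathbf V$, and the explicit growth \eqref{growth on projection}), and then to extract \eqref{decay on stable subspace} from Propositions \ref{Le:Spectum_Struct} and \ref{point spectrum of evolution} by invoking the Gearhart--Pr\"uss--Greiner theorem on the stable subspace. Commutation is immediate: every $\mathbf S_\mathbf V(\tau)$ commutes with the resolvent $\mathbf R_{\mathbf L_\mathbf V}(\lambda)$ of its generator for each $\lambda$ in the resolvent set, in particular for $\lambda$ on the contour $\gamma$, hence it commutes with the Riesz integral defining $\mathbf P_\mathbf V$. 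For \eqref{growth on projection}, once we know $\range\mathbf P_\mathbf V=\langle\mathbf f_1^*\rangle$ with $\mathbf L_\mathbf V\mathbf f_1^*=\mathbf f_1^*$ (Proposition \ref{point spectrum of evolution}), the orbit $\tau\mapsto e^\tau\mathbf f_1^*$ solves the abstract Cauchy problem $v'=\mathbf L_\mathbf V v$, $v(0)=\mathbf f_1^*$, so by uniqueness $\mathbf S_\mathbf V(\tau)\mathbf f_1^*=e^\tau\mathbf f_1^*$; writing $\mathbf P_\mathbf V\mathbf u=c(\mathbf u)\mathbf f_1^*$ then gives the claim for general $\mathbf u$.

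The only nontrivial algebraic point is $\range\mathbf P_\mathbf V=\langle\mathbf f_1^*\rangle$. Since $\lambda=1$ is an isolated eigenvalue of finite algebraic multiplicity (Proposition \ref{Le:Spectum_Struct}), $\range\mathbf P_\mathbf V=\bigcup_{k\geq1}\ker\big((1-\mathbf L_\mathbf V)^k\big)$ is a finite-dimensional $\mathbf L_\mathbf V$-invariant subspace on which $1-\mathbf L_\mathbf V$ is nilpotent; by Proposition \ref{point spectrum of evolution} its kernel is $\langle\mathbf f_1^*\rangle$, so it suffices to show there is \emph{no} $\mathbf u\in\mathcal D(\mathbf L_\mathbf V)$ with $(1-\mathbf L_\mathbf V)\mathbf u=\mathbf f_1^*$. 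Expanding this identity in radial representatives exactly as in the proof of Proposition \ref{point spectrum of evolution} reduces it to the inhomogeneous ODE
\[ -(1-\rho^2)\hat u_1''(\rho)-\Big(\tfrac6\rho-8\rho\Big)\hat u_1'(\rho)+\big(10-\tilde V(\rho)\big)\hat u_1(\rho)=2\rho f'(\rho;1)+7f(\rho;1), \]
with $\hat u_2(\rho)=\rho\hat u_1'(\rho)+2\hat u_1(\rho)-f(\rho;1)$, whose homogeneous part is Equation \eqref{spectral 1 eqn} with the explicit fundamental system $\{f(\cdot;1),g_1\}$. Variation of parameters yields a one-parameter family of solutions; differentiating the family in Equation \eqref{spectral equation} with respect to $\lambda$ at $\lambda=1$ identifies the member which is regular at $\rho=0$ (up to adding $f(\cdot;1)$) as $-\partial_\lambda f(\cdot;\lambda)\big|_{\lambda=1}$, and since Proposition \ref{mode stability} shows that $f(\cdot;\lambda)$ ceases to be regular at $\rho=1$ as soon as $\lambda\neq1$, this particular solution carries a non-removable (logarithmic) singularity at $\rho=1$ which cannot be cancelled by a multiple of the regular solution $f(\cdot;1)$. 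Hence no solution lies in $H^5_\text{rad}(\mathbb B^7)$, so $\rank\mathbf P_\mathbf V=1$.

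For \eqref{decay on stable subspace}, the commutation property makes $\mathcal H_s:=\ker\mathbf P_\mathbf V=\range(1-\mathbf P_\mathbf V)$ invariant under the semigroup, and the restriction $\mathbf S_\mathbf V^s(\tau):=\mathbf S_\mathbf V(\tau)|_{\mathcal H_s}$ is a strongly continuous semigroup on the Hilbert space $\mathcal H_s$ whose generator $\mathbf L_\mathbf V^s$ satisfies $\sigma(\mathbf L_\mathbf V^s)=\sigma(\mathbf L_\mathbf V)\setminus\{1\}\subseteq\{\lambda\in\mathbb C:\Re\lambda\leq-\omega_0\}$ by Proposition \ref{point spectrum of evolution}. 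Fix $\omega\in\big(0,\min\{\omega_0,\tfrac14\}\big)$. I would then show $\sup_{\lambda\in\overline{\mathbb H}_{-\omega}}\|\mathbf R_{\mathbf L_\mathbf V^s}(\lambda)\|_{\mathcal B(\mathcal H_s)}<\infty$: for $|\lambda|\geq K$ this is Proposition \ref{Le:Spectum_Struct} restricted to $\mathcal H_s$ (using that $\mathbf R_{\mathbf L_\mathbf V}(\lambda)$ commutes with $\mathbf P_\mathbf V$, so $\mathbf R_{\mathbf L_\mathbf V^s}(\lambda)=\mathbf R_{\mathbf L_\mathbf V}(\lambda)|_{\mathcal H_s}$), while for $|\lambda|\leq K$ the set $\{\lambda:\Re\lambda\geq-\omega,\ |\lambda|\leq K\}$ is a compact subset of $\rho(\mathbf L_\mathbf V^s)$ (as $\omega<\omega_0$) on which the resolvent is holomorphic, hence bounded. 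Applying the Gearhart--Pr\"uss--Greiner theorem (p.~322, Theorem 1.11 of \cite{EN00}) to the rescaled semigroup $\big(e^{\omega\tau}\mathbf S_\mathbf V^s(\tau)\big)_{\tau\geq0}$, whose generator $\mathbf L_\mathbf V^s+\omega$ has spectrum in $\{\Re\lambda\leq-\omega_0+\omega<0\}$ and uniformly bounded resolvent on $\overline{\mathbb H}$, then gives $\sup_{\tau\geq0}\|e^{\omega\tau}\mathbf S_\mathbf V^s(\tau)\|<\infty$, which is \eqref{decay on stable subspace} after observing $\mathbf S_\mathbf V(\tau)(1-\mathbf P_\mathbf V)=\mathbf S_\mathbf V^s(\tau)(1-\mathbf P_\mathbf V)$.

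The main obstacle is twofold. First, the endpoint analysis of the inhomogeneous ODE — verifying that the singularity of the particular solution at $\rho=1$ genuinely survives and cannot be removed — is the one place where soft arguments do not suffice and one must exploit the structure exposed by Proposition \ref{mode stability}. Second, the whole passage from spectrum to decay hinges on the uniform resolvent bound on the half-plane $\overline{\mathbb H}_{-\omega}$, whose high-frequency part is precisely the content of Proposition \ref{Le:Spectum_Struct}, and it is essential here that $\mathcal H$ is a Hilbert space so that Gearhart--Pr\"uss is available, since a bare spectral bound alone would not yield exponential decay. Everything else is routine manipulation of the spectral decomposition induced by $\mathbf P_\mathbf V$.
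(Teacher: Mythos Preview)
Your overall architecture matches the paper: commutation via the Riesz integral, the identity \eqref{growth on projection} from $\range\mathbf P_\mathbf V=\langle\mathbf f_1^*\rangle$, and \eqref{decay on stable subspace} from the resolvent bound of Proposition \ref{Le:Spectum_Struct} combined with Gearhart--Pr\"uss--Greiner on the stable subspace. Your treatment of the decay bound is in fact more explicit than the paper's, which simply cites Proposition \ref{Le:Spectum_Struct} and Gearhart--Pr\"uss without spelling out the compactness argument for small $|\lambda|$.

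The gap is in your argument that $\rank\mathbf P_\mathbf V=1$. Your idea of identifying the particular solution regular at $\rho=0$ as $-\partial_\lambda f(\cdot;\lambda)|_{\lambda=1}$ is correct, but the inference ``$f(\cdot;\lambda)$ is singular at $\rho=1$ for all $\lambda\neq1$, hence $\partial_\lambda f|_{\lambda=1}$ is singular there'' does not follow from Proposition \ref{mode stability}. Writing $f(\cdot;\lambda)=a(\lambda)\varphi_{\mathrm{reg}}(\cdot;\lambda)+b(\lambda)\varphi_{\mathrm{sing}}(\cdot;\lambda)$ in terms of a Frobenius basis at $\rho=1$, mode stability gives only $b(\lambda)\neq0$ for $\Re\lambda\geq0$, $\lambda\neq1$, together with $b(1)=0$. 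The singularity of $\partial_\lambda f|_{\lambda=1}$ at $\rho=1$ is equivalent to $b'(1)\neq0$, i.e.\ to the zero of $b$ at $\lambda=1$ being simple. But the order of this zero \emph{is} the algebraic multiplicity of the eigenvalue, so you are assuming what you set out to prove. Nothing in Proposition \ref{mode stability} rules out $b(\lambda)=c(\lambda-1)^2+\dots$ near $\lambda=1$.

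The paper closes this gap by direct computation rather than by any soft argument: it writes the inhomogeneous ODE with explicit right-hand side $G(\rho)=(3\rho^2-35)/(5+3\rho^2)^3$, uses the explicit fundamental system $\{f(\cdot;1),g_1\}$ and Wronskian, evaluates one of the variation-of-parameters integrals in closed form as $-\rho^7/(5+3\rho^2)^4$, and then reads off that the surviving term $\rho^7 g_1(\rho)/(5+3\rho^2)^4$ inherits the logarithmic singularity of $g_1$ at $\rho=1$. This is precisely the ``endpoint analysis'' you flag as the obstacle, and it genuinely requires the explicit formulas rather than the qualitative information encoded in mode stability.
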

			
			\begin{proof}
				By definition, $\mathbf P_\mathbf V$ commutes with $\mathbf{L}_\mathbf V$ and thus commutes with the semigroup $\mathbf{S}_\mathbf V(\tau)$, see \cite{K95}. Next, we show that $\langle\mathbf f_1^*\rangle=\range\mathbf P_\mathbf V$. In fact, it suffices to show $\range\mathbf P_\mathbf V\subseteq\langle\mathbf f_1^*\rangle$ since the reverse inclusion follows from abstract theory. To see this, first observe that $\mathbf P_\mathbf V$ decomposes the Hilbert space as $\mathcal{H}=\range\mathbf P_\mathbf V\oplus\ker\mathbf P_\mathbf V$. The operator $\mathbf{L}_\mathbf V$ is decomposed into the parts $\mathbf{L}_1$ and $\mathbf{L}_2$ on the range and kernel of $\mathbf P_\mathbf V$ respectively. The spectra of these operators are given by
					$$
						\sigma(\mathbf{L}_2)=\sigma(\mathbf{L_\mathbf V})\setminus\{1\},\;\;\;\sigma(\mathbf{L}_1)=\{1\}.
					$$

By Proposition \ref{Le:Spectum_Struct}, the algebraic multiplicity of $1$ is finite, i.e.,  $\rank\mathbf P_\mathbf V:=\dim\range\mathbf P_\mathbf V<\infty$. Hence, the operator $1-\mathbf{L}_1$ acts on the finite-dimensional Hilbert space $\range\mathbf P_\mathbf V$ and, since $\sigma(\mathbf{L}_1)=\{1\}$, $0$ is the only spectral point of $1-\mathbf{L}_1$. Thus, $1-\mathbf{L}_1$ is nilpotent, i.e., there exists $k\in\mathbb{N}$ such that 
					$$
						(1-\mathbf{L}_1)^k\mathbf{u}=\mathbf 0
					$$
				for all $\mathbf{u}\in\range\mathbf P_\mathbf V$ where $k$ is minimal. If $k=1$, then $\langle\mathbf f_1^*\rangle=\range\mathbf P_\mathbf V$ by Proposition \ref{point spectrum of evolution}. Suppose $k\geq2$. Then there exists $\mathbf{u}\in\range\mathbf P_\mathbf V$ such that $(1-\mathbf{L}_1)\mathbf{u}\neq\mathbf 0$ but $(1-\mathbf{L}_1)^2\mathbf{u}=\mathbf 0$. Thus $(1-\mathbf{L})\mathbf{u}=\alpha\mathbf f_1^*$ for some $\alpha\in\mathbb{C}\setminus\{0\}$. Without loss of generality, we set $\alpha=-1$. Observe that the radial representative of the first component of $\mathbf{u}$ solves the ODE
					\begin{equation}
						-(1-\rho^2)\hat u_1''(\rho)-\big(\tfrac{6}{\rho}-8\rho\big)\hat u_1'(\rho)+\Big(10-\hat V(\rho)\Big)\hat u_1(\rho)=G(\rho) \label{inhom spectral 1 eqn}
					\end{equation}
				for $\rho\in(0,1)$ where
					$$
						G(\rho)=\frac{3\rho^2-35}{\big(5+3\rho^2\big)^3}.
					$$
				Recall that we have a fundamental system $\{f(\cdot;1),g_1\}$ of the homogeneous equation, see Proposition \ref{mode stability} and Equation \eqref{Def:g1} for the definitions. Their Wronskian is given explicitly by
					$$
						W\big(f(\cdot;1),g_1\big)(\rho)=\rho^{-6}(1-\rho^2)^{-1}=:W(\r).
					$$
				By variation of parameters, the general solution of \eqref{inhom spectral 1 eqn} can be expressed as
					\begin{align*}
						u_1(\rho)=&c_1f(\r;1)+c_2g_1(\rho)
						\\
						&-g_1(\rho)\int_0^\rho\frac{f(s;1)}{W(s)}\frac{G(s)}{1-s^2}ds+f(\r;1)\int_0^\rho\frac{g_1(s)}{W(s)}\frac{G(s)}{1-s^2}ds
					\end{align*}
				for some $c_1,c_2\in\mathbb{C}$ and all $\rho\in(0,1)$. Explicitly, we find
					$$
						\int_0^\rho\frac{f(s;1)}{W(s)}\frac{G(s)}{1-s^2}ds=-\frac{\rho^7}{(5+3\rho^2)^4}.
					$$
				Consequently, demanding $u_1\in H^5_\text{rad}(\mathbb{B}^7)$ implies we must have $c_2=0$. Thus, we are left with 
					$$
						\hat u_1(\rho)=c_1f(\r;1)+\frac{\rho^7g_1(\rho)}{(5+3\rho^2)^4}+f(\r;1)\int_0^\rho\frac{g_1(s)}{W(s)}\frac{G(s)}{1-s^2}ds.
					$$
				Inspection of the explicit expressions reveals that the remaining integral indeed converges as $\r\to1^-$. Thus, $u_1$ fails to be in $H^5_\text{rad}(\mathbb{B}^7)$ due to the logarithmic behavior of $g_1$ near $\rho=1$ in the second term. We conclude that there is no such solution in $H^5_\text{rad}(\mathbb{B}^7)$ and, as a consequence, we must have $k=1$.
				
Now, observe that Equation \eqref{growth on projection} follows from the facts that $\lambda=1$ is an eigenvalue of $\mathbf{L}_\mathbf V$ with eigenfunction $\mathbf f_1^*$ and $\range\mathbf P_\mathbf V=\langle\mathbf f_1^*\rangle$.
Finally, the growth bound \eqref{decay on stable subspace} is a consequence of the resolvent bounds in Proposition \ref{Le:Spectum_Struct} and the Gearhart-Pr\"uss-Greiner Theorem (see Theorem 1.11 on p. 302 of \cite{EN00}). 
\end{proof}

\subsection{Main result on the linear time evolution}\label{Main result on the linear time evolution}
		We are now in a position to prove our main result on the evolution described by $\mb L$. First, we have as an immediate consequence of Proposition \ref{point spectrum of evolution} that
\[
				\sigma(\mathbf L)\subseteq\{\la\in\mathbb C:\Re\la \leq-\omega_0\}\cup\{1\}
\]
		with $1$ being an eigenvalue. Furthermore, $\ker(1-\mathbf L)=\langle\mathbf\Gamma^{-1}\mathbf f_1^*\rangle$. We write $\mb g_1^*:=\mathbf\Gamma^{-1}\mathbf f_1^*$ and denote the corresponding Riesz projection by 
			$$
				\mathbf P :=\frac{1}{2\pi i}\int_\gamma\mathbf{R}_{\mathbf L}(\lambda)d\lambda. 
			$$

The following statement is a direct consequence of Proposition \ref{projection}.
		
			\begin{theorem}\label{linear stability}
				The projection $\mathbf P$ commutes with the semigroup $\big(\mathbf{S}(\tau)\big)_{\t\geq0}$ and satisfies $\range\mathbf P=\langle\mathbf g_1^*\rangle$. Furthermore, 
								$$
									\mathbf{S}(\tau)\mathbf{P}\mathbf{u}=e^\tau\mathbf{P}\mathbf{u},
								$$
for any $\mathbf u\in\mathcal H$ and all $\tau\geq0$. Finally, for $\omega>0$ as in Proposition \ref{projection}, there exists $C'\geq1$ so that
								$$
									\|\mathbf S(\t)(1-\mathbf P)\mathbf u\|_\mathcal H\leq C'e^{-\omega\t}\|(1-\mathbf P)\mathbf u\|_\mathcal H
								$$
							for any $\mathbf u\in\mathcal H$ and all $\tau\geq0$.
			\end{theorem}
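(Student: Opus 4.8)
The plan is to transfer every assertion of Proposition \ref{projection} from $\mathbf L_{\mathbf V}$ to $\mathbf L$ by conjugating with the bounded invertible operator $\mathbf\Gamma$, exactly as Corollary \ref{Similar Semigroup} already does for the semigroups. First I would record the resolvent identity underlying everything. Since $\lambda - \mathbf L = \mathbf\Gamma^{-1}(\lambda - \mathbf L_{\mathbf V})\mathbf\Gamma$ on $\mathcal D(\mathbf L) = \mathcal D(\mathbf L_0) = \mathcal D(\mathbf L_{\mathbf V})$ by Proposition \ref{compactification}, and $\rho(\mathbf L) = \rho(\mathbf L_{\mathbf V})$ by Corollary \ref{isospectral}, one obtains $\mathbf R_{\mathbf L}(\lambda) = \mathbf\Gamma^{-1}\mathbf R_{\mathbf L_{\mathbf V}}(\lambda)\mathbf\Gamma$ as an identity of bounded operators for every $\lambda$ in the common resolvent set. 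Integrating this over the contour $\gamma$, which by Corollary \ref{isospectral} encircles the single spectral point $1$ of both operators, yields $\mathbf P = \mathbf\Gamma^{-1}\mathbf P_{\mathbf V}\mathbf\Gamma$, where $\mathbf P$ is the Riesz projection defined just before the theorem.

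With this identity and $\mathbf S(\tau) = \mathbf\Gamma^{-1}\mathbf S_{\mathbf V}(\tau)\mathbf\Gamma$ from Corollary \ref{Similar Semigroup}, the commutation $\mathbf P\mathbf S(\tau) = \mathbf S(\tau)\mathbf P$ follows immediately from the commutation of $\mathbf P_{\mathbf V}$ with $\mathbf S_{\mathbf V}(\tau)$ in Proposition \ref{projection}. For the range, $\mathbf\Gamma^{-1}$ is a linear bijection of $\mathcal H$, so $\range\mathbf P = \mathbf\Gamma^{-1}(\range\mathbf P_{\mathbf V}) = \mathbf\Gamma^{-1}\langle\mathbf f_1^*\rangle = \langle\mathbf g_1^*\rangle$ by Proposition \ref{projection} and the definition $\mathbf g_1^* = \mathbf\Gamma^{-1}\mathbf f_1^*$.

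Finally I would establish the two growth bounds. On the range of $\mathbf P$, for $\mathbf u \in \mathcal H$,
\[
\mathbf S(\tau)\mathbf P\mathbf u = \mathbf\Gamma^{-1}\mathbf S_{\mathbf V}(\tau)\mathbf P_{\mathbf V}\mathbf\Gamma\mathbf u = \mathbf\Gamma^{-1}e^{\tau}\mathbf P_{\mathbf V}\mathbf\Gamma\mathbf u = e^{\tau}\mathbf P\mathbf u
\]
by \eqref{growth on projection}. On the complementary subspace, using $1-\mathbf P = \mathbf\Gamma^{-1}(1-\mathbf P_{\mathbf V})\mathbf\Gamma$ and \eqref{decay on stable subspace},
\begin{align*}
\|\mathbf S(\tau)(1-\mathbf P)\mathbf u\|_{\mathcal H}
&= \|\mathbf\Gamma^{-1}\mathbf S_{\mathbf V}(\tau)(1-\mathbf P_{\mathbf V})\mathbf\Gamma\mathbf u\|_{\mathcal H}
\leq \|\mathbf\Gamma^{-1}\|\, Ce^{-\omega\tau}\|(1-\mathbf P_{\mathbf V})\mathbf\Gamma\mathbf u\|_{\mathcal H} \\
&= \|\mathbf\Gamma^{-1}\|\, Ce^{-\omega\tau}\|\mathbf\Gamma(1-\mathbf P)\mathbf u\|_{\mathcal H}
\leq \|\mathbf\Gamma^{-1}\|\,\|\mathbf\Gamma\|\, Ce^{-\omega\tau}\|(1-\mathbf P)\mathbf u\|_{\mathcal H},
\end{align*}
so the claim holds with $C' := \max\{1,\|\mathbf\Gamma\|\,\|\mathbf\Gamma^{-1}\|\,C\}$ and the same $\omega$ as in Proposition \ref{projection}.

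There is essentially no genuine obstacle here, since the hard work (the compactification of Proposition \ref{compactification}, the spectral analysis culminating in Proposition \ref{point spectrum of evolution}, and the Gearhart--Pr\"uss argument behind \eqref{decay on stable subspace}) is already done. The only point that requires a moment of care is confirming that the operator-level conjugation $\mathbf\Gamma\mathbf L\mathbf\Gamma^{-1} = \mathbf L_{\mathbf V}$ genuinely descends to the bounded identity $\mathbf R_{\mathbf L}(\lambda) = \mathbf\Gamma^{-1}\mathbf R_{\mathbf L_{\mathbf V}}(\lambda)\mathbf\Gamma$; this is where the domain identification of Proposition \ref{compactification} and the equality of resolvent sets of Corollary \ref{isospectral} are used, and once it is in hand everything else is bookkeeping with the fixed bounded invertible operator $\mathbf\Gamma$.
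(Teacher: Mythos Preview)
Your proposal is correct and follows essentially the same approach as the paper: both transfer every assertion of Proposition \ref{projection} to $\mathbf L$ by conjugating with $\mathbf\Gamma$, deriving $\mathbf P = \mathbf\Gamma^{-1}\mathbf P_{\mathbf V}\mathbf\Gamma$ and then reading off the commutation, range, and growth bounds. You are slightly more explicit than the paper in justifying the resolvent identity and the formula $\mathbf S(\tau)\mathbf P\mathbf u = e^{\tau}\mathbf P\mathbf u$, but the substance is identical; note also that your $\max$ in the definition of $C'$ is unnecessary since $\|\mathbf\Gamma\|\|\mathbf\Gamma^{-1}\| \geq 1$ and $C \geq 1$ already.
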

			\begin{proof}
				According to Proposition \ref{compactification of L}, we have that $\mathbf\Gamma\mathbf P\mathbf\Gamma^{-1}=\mathbf P_\mathbf V$. That $\range\mathbf P=\langle\mathbf g_1^*\rangle$ follows from the fact that the map $\mathbf\Gamma^{-1}:\range\mathbf P_\mathbf V\to\range\mathbf P$ is a bijection. By Corollary \ref{Similar Semigroup} and Proposition \ref{projection} we obtain
					\begin{align*}
						\|\mathbf S(\t)(1-\mathbf P)\mathbf f\|_\mathcal H=&\|\mathbf\Gamma^{-1}\mathbf S_\mathbf V(\t)\mathbf\Gamma(1-\mathbf\Gamma^{-1}\mathbf P_\mathbf V\mathbf\Gamma)\mathbf f\|_\mathcal H
						\\
						&=\|\mathbf\Gamma^{-1}\mathbf S_\mathbf V(\t)(1-\mathbf P_\mathbf V)\mathbf\Gamma\mathbf f\|_\mathcal H
						\\
						&\leq C\|\mathbf\Gamma^{-1}\|e^{-\omega\t}\|(1-\mathbf P_\mathbf V)\mathbf\Gamma\mathbf f\|_\mathcal H
						\\
						&=C\|\mathbf\Gamma^{-1}\|e^{-\omega\t}\|\mathbf\Gamma\mathbf\Gamma^{-1}(1-\mathbf P_\mathbf V)\mathbf\Gamma\mathbf f\|_\mathcal H
						\\
						&\leq C\|\mathbf\Gamma^{-1}\|\|\mathbf\Gamma\|e^{-\omega\t}\|(1-\mathbf P)\mathbf f\|_\mathcal H.
					\end{align*}
				Setting $C':=C\|\mathbf\Gamma^{-1}\|\|\mathbf\Gamma\|\geq1$ establishes the claim.
			\end{proof}

\section{The nonlinear time evolution}\label{The nonlinear time evolution}
	This section is devoted to solving the nonlinear problem \eqref{modified expanded first order eqn}. For the remainder of the arguments, we restrict our attention to the real-valued subspace of $\mc H$. We begin by showing that, within our functional analytic framework, the nonlinearity defines a locally Lipschitz mapping on sufficiently small balls in $\mathcal H$. Then, by a contraction mapping argument, we construct solutions of the nonlinear problem. First, we perform some preliminary calculations and decompositions.
	
	\subsection{Nonlinear estimates}\label{Nonlinear estimates}
		For $\mathbf u=(u_1,u_2)\in C_\text{rad}^\infty(\overline{\mathbb B^7})\times C_\text{rad}^\infty(\overline{\mathbb B^7})$, the nonlinearity $\mathbf N$ is given by the expression
			$$
				\mathbf{N}(\mathbf{u})(\xi):=
					\begin{pmatrix}
						0
						\\
						N\big(|\xi|u_1(\xi),\xi^j\partial_j u_1(\xi),|\xi|u_2(\xi),|\xi|\big)
					\end{pmatrix}
			$$
		for $\xi\in\overline{\mathbb B^7}$ with $N$ defined as in Equation \eqref{scalar nonlinear remainder}. Given $\d>0$ and $k\in\mathbb N$, we define
			$$
				\mathcal B_\delta^k:=\{\mathbf u\in\mathcal H^k:\|\mathbf u\|_{\mathcal H^k}\leq\delta\}.
			$$
		If $k=5$, then we will simply write $\mc B_\d:=\mc B_\d^5$. The goal of this section is to prove the following proposition.
			\begin{proposition}\label{locally lipschitz nonlinearity estimate}
				Let $k\in\mathbb N$ with $k\geq5$. There exists $\d_0>0$ such that for any $\d\in(0,\d_0]$, the map $\mathbf N:\mathcal B_\d^k\to\mathcal H^k$ is defined and satisfies the following local Lipschitz bound
					$$
						\|\mathbf N(\mathbf u)-\mathbf N(\mathbf v)\|_{\mathcal H^k}\lesssim_k\big(\|\mathbf u\|_{\mathcal H^k}+\|\mathbf v\|_{\mathcal H^k}\big)\|\mathbf u-\mathbf v\|_{\mathcal H^k}.
					$$
			\end{proposition}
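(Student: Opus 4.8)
The plan is to reduce the estimate to the Banach algebra property of $H^{k-1}(\mathbb B^7)$, valid since $k-1\geq4>7/2$, together with a standard composition (Moser-type) estimate, after first extracting a favorable structural decomposition of the scalar nonlinearity $N$. Although $F$ in \eqref{Nonlin_Transformed_SF} carries the singular prefactors $\rho^{-1},\rho^{-2},\rho^{-3}$, these cancel: the maps $z\mapsto z\cot z$ and $z\mapsto\tfrac32\sin(2z)-2z-z^2\cot z$ are real analytic on $(-\pi,\pi)$, the former equal to $1$ at $z=0$, the latter odd and vanishing to third order at $z=0$. Substituting $z=\rho\psi_1$ with $\psi_1=U_1+u_1$ and keeping the arguments $\rho\psi_2$ and $\rho\partial_\rho\psi_1$, every negative power of $\rho$ disappears and $F$ becomes a finite sum of terms $c(|\xi|^2,\psi_1)\,q(\xi,\nabla\psi_1,\psi_2)$, with $c$ smooth on $\{\psi_1\neq0,\ |\xi|\,|\psi_1|<\pi-\varepsilon\}$ and $q$ a polynomial of degree at most two in $\psi_2$ and in the \emph{Cartesian} gradient $\nabla\psi_1$, with coefficients in $C^\infty(\overline{\mathbb B^7})$; it is essential here to record the spatial derivative via the gradient, using $(\partial_\rho\psi_1)^2=|\nabla\psi_1|^2$ and $\rho\partial_\rho\psi_1=\xi^j\partial_j\psi_1$, so that these terms are controlled in $H^{k-1}(\mathbb B^7)$. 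Since, by \eqref{scalar nonlinear remainder}, $N$ is by construction what remains after subtracting from $F(\rho,\rho U_1+\rho u_1,\rho\partial_\rho U_1+\rho\partial_\rho u_1,\rho U_2+\rho u_2)$ its value and its linearization at $u_1=u_2=0$, substituting $\psi_1=U_1+u_1$, $\psi_2=U_2+u_2$ and dropping the (cancelled) terms of order $0$ and $1$ leaves, for $\mathbf u\in\mathcal B_\delta^k$,
\[
N\big(|\xi|u_1,\xi^j\partial_ju_1,|\xi|u_2,|\xi|\big)=\sum_{\ell}\mathsf m_\ell(\mathbf u)(\xi)\,R_\ell(\mathbf u)(\xi),
\]
a finite sum in which each $\mathsf m_\ell(\mathbf u)$ is a product of two factors drawn from $u_1,\xi^j\partial_ju_1,\partial_1u_1,\dots,\partial_7u_1,u_2$ times a fixed function in $C^\infty(\overline{\mathbb B^7})$, and each $R_\ell(\mathbf u)$ is a polynomial in $u_2,\nabla u_1$ whose coefficients are compositions of smooth functions with $|\xi|^2$ and $\psi_1=U_1+u_1$.

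Next I would record the pointwise bounds. For $k\geq5$ we have $H^k(\mathbb B^7)\hookrightarrow C^1(\overline{\mathbb B^7})$ and $H^{k-1}(\mathbb B^7)\hookrightarrow C(\overline{\mathbb B^7})$, hence $\|u_1\|_{C^1(\overline{\mathbb B^7})}+\|u_2\|_{C(\overline{\mathbb B^7})}\lesssim_k\|\mathbf u\|_{\mathcal H^k}$. Since $U_1$ is smooth and bounded below by a positive constant on $[0,1]$, and $|\xi|U_1(|\xi|)=U(|\xi|)\leq U(1)=\tfrac\pi2$, there is $\delta_0>0$ so that for every $\mathbf u\in\mathcal B_\delta^k$ with $\delta\leq\delta_0$ one has $\psi_1\geq c_0>0$ and $|\xi|\,\psi_1<\pi-\varepsilon$ on $\overline{\mathbb B^7}$; in particular the arguments of the smooth functions entering $R_\ell(\mathbf u)$ remain in a fixed compact subset of their domain, so $\mathbf N(\mathbf u)=(0,N(\dots))$ is well defined.

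The argument then closes with the algebra and composition estimates. For $G$ smooth on a neighborhood of the relevant compact set and $v$ in a bounded ball of $H^{k-1}(\mathbb B^7)$, one has $\|G\circ v\|_{H^{k-1}}\lesssim_G1$ and $\|G\circ v-G\circ w\|_{H^{k-1}}\lesssim_G\|v-w\|_{H^{k-1}}$, by the algebra property and the chain rule. Consequently, on $\mathcal B_\delta^k$, each $R_\ell(\mathbf u)$ lies in $H^{k-1}(\mathbb B^7)$ with $\|R_\ell(\mathbf u)\|_{H^{k-1}}\lesssim1$, while $\|\mathsf m_\ell(\mathbf u)\|_{H^{k-1}}\lesssim\|\mathbf u\|_{\mathcal H^k}^2$ because $u_1\in H^k\subset H^{k-1}$ and $\xi^j\partial_ju_1,\partial_ju_1,u_2\in H^{k-1}$; this gives $\mathbf N(\mathbf u)\in\mathcal H^k$. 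For the Lipschitz bound, write for $\mathbf u,\mathbf v\in\mathcal B_\delta^k$
\[
\mathsf m_\ell(\mathbf u)R_\ell(\mathbf u)-\mathsf m_\ell(\mathbf v)R_\ell(\mathbf v)=\big(\mathsf m_\ell(\mathbf u)-\mathsf m_\ell(\mathbf v)\big)R_\ell(\mathbf u)+\mathsf m_\ell(\mathbf v)\big(R_\ell(\mathbf u)-R_\ell(\mathbf v)\big),
\]
and bound the first summand by $\|\mathsf m_\ell(\mathbf u)-\mathsf m_\ell(\mathbf v)\|_{H^{k-1}}\|R_\ell(\mathbf u)\|_{H^{k-1}}\lesssim(\|\mathbf u\|_{\mathcal H^k}+\|\mathbf v\|_{\mathcal H^k})\|\mathbf u-\mathbf v\|_{\mathcal H^k}$ (difference of quadratic monomials in an algebra), and the second by $\|\mathsf m_\ell(\mathbf v)\|_{H^{k-1}}\|R_\ell(\mathbf u)-R_\ell(\mathbf v)\|_{H^{k-1}}\lesssim\|\mathbf v\|_{\mathcal H^k}^2\,\|\mathbf u-\mathbf v\|_{\mathcal H^k}\leq\delta(\|\mathbf u\|_{\mathcal H^k}+\|\mathbf v\|_{\mathcal H^k})\|\mathbf u-\mathbf v\|_{\mathcal H^k}$, using the composition Lipschitz estimate. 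Summing over the finitely many $\ell$ gives the proposition.

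The main obstacle is the first step: performing the algebraic reduction of $N$ so that all negative powers of $\rho$ manifestly cancel, so that the resulting coefficients are genuinely smooth up to both $\rho=0$ and $\rho=1$, so that the derivative dependence is cast entirely in terms of the Cartesian gradient (which makes essential use of the even/odd structure of radial smooth functions, cf.\ Lemma~2.1 of \cite{G22a}), and so that the second-order vanishing producing the factor $\|\mathbf u\|_{\mathcal H^k}+\|\mathbf v\|_{\mathcal H^k}$ is visible term by term. Once this bookkeeping is carried out, only the routine algebra and composition machinery remains.
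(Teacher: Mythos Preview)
Your proposal is correct and follows essentially the same strategy as the paper: expose the quadratic vanishing of $N$ at $\mathbf u=\mathbf 0$ via a Taylor-type expansion, recast the radial derivative in Cartesian form using $\rho\partial_\rho=\xi^j\partial_j$ and $(\partial_\rho\psi_1)^2=|\nabla\psi_1|^2$ (the paper does exactly this in its treatment of $\mathbf N_3$, see Lemma~\ref{N23well defined}), and then close via the Banach-algebra property of $H^{k-1}(\mathbb B^7)$ together with a composition estimate.

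The only differences are organizational. The paper splits $F=F_1+F_2+F_3$ according to its three terms and treats each piece separately (Propositions~\ref{N1 lipschitz bound}, \ref{N2 lipschitz bound}, \ref{N3 lipschitz bound}), writing out the Taylor remainders with explicit integral form and verifying in dedicated lemmas that the resulting coefficients lie in $C_e^\infty[0,R]$; your monomial-by-monomial decomposition $\sum_\ell\mathsf m_\ell R_\ell$ achieves the same thing in a more condensed way. On the composition side, the paper globalizes by multiplying with smooth cutoffs $\chi_1,\chi_2$ and then appeals to a ready-made composition lemma (Lemma~2.13 of \cite{BDS19}), whereas you invoke a Moser-type estimate directly on the compact range guaranteed by Sobolev embedding; these are equivalent devices. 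Your identification of the ``main obstacle'' (the algebraic reduction ensuring smoothness up to $\rho=0$ and $\rho=1$, including the even/odd parity arguments) matches precisely what the paper spends most of its effort on in Lemmas~\ref{N1 well defined}, \ref{N2 well defined}, and \ref{N23well defined}.
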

		We will prove this by first decomposing the nonlinearity into three pieces and proving the bound on each piece separately. 
		
		\subsubsection{Decomposition of the nonlinearity}
			First, recall the expression 
				\begin{align*}
					F(x,y,z,\r)=&-\r^{-1}\cot(x)\big(z^2-y^2\big)
					\\
					&-2\r^{-2}\big(1-x\cot(x)\big)y
					\\
					&-\r^{-3}\Big(\tfrac{3}{2}\sin(2x)-2x-x^2\cot(x)\Big)
				\end{align*}
			for real numbers $x,y,z,\r$ to be specified later. We decompose this into three terms
				\begin{equation}
					F_1(x):=-\Big(\frac{3}{2}\sin(2x)-2x-x^2\cot(x)\Big), \label{F1}
				\end{equation}
				$$
					F_2(x,y):=-2\big(1-x\cot(x)\big)y,
				$$
			and
				$$
					F_3(x,y,z):=-\cot(x)\big(z^2-y^2\big)
				$$
			so that
				$$
					F(x,y,z,\r)=\r^{-3}F_1(x)+\r^{-2}F_2(x,y)+\r^{-1}F_3(x,y,z).
				$$
			Recall that $N$ is obtained by expanding $F(x,y,z,\r)$ around
				$$
					(x,y,z)=\big(\r U_1(\r)+\r\zeta_1,\r U_1'(\r)+\zeta_2,\r U_2(\r)+\r\zeta_3\big)
				$$
			for real numbers $\zeta_1,\zeta_2,\zeta_3$ to be specified later. To that end, we define
				$$
					\hat N_1(\zeta_1,\r):=\r^{-3}\Big(F_1\big(\r U_1(\r)+\r\zeta_1\big)-F_1\big(\r U_1(\r)\big)-F_1'\big(\r U_1(\r)\big)\r\zeta_1\Big), \label{N1}
				$$
				\begin{align*}
					\hat N_2(\zeta_1,\zeta_2,\r):=\r^{-2}\Big(&F_2\big(\r U_1(\r)+\r\zeta_1,\r U_1'(\r)+\zeta_2\big)-F_2\big(\r U_1(\r),\r U_1'(\r)\big)
					\\
					&-\partial_1F_2\big(\r U_1(\r),\r U_1'(\r)\big)\r\zeta_1-\partial_2F_2\big(\r U_1(\r),\r U_1'(\r)\big)\zeta_2\Big),
				\end{align*}
			and
				\begin{align*}
					\hat N_3(\zeta_1&,\zeta_2,\zeta_3,\r)
					\\
					:=&\r^{-1}\Big(F_3\big(\r U_1(\r)+\r\zeta_1,\r U_1'(\r)+\zeta_2,\r U_2(\r)+\r\zeta_3\big)
					\\
					&-F_3\big(\r U_1(\r),\r U_1'(\r),\r U_2(\r)\big)-\partial_1F_3\big(\r U_1(\r),\r U_1'(\r),\r U_2(\r)\big)\r\zeta_1
					\\
					&-\partial_2F_3\big(\r U_1(\r),\r U_1'(\r),\r U_2(\r)\big)\zeta_2-\partial_3F_3\big(\r U_1(\r ),\r U_1'(\r),\r U_2(\r)\big)\zeta_3\Big) 
				\end{align*}
			so that
				$$
					N(\r\zeta_1,\zeta_2,\r\zeta_3,\r)=\hat N_1(\zeta_1,\r)+\hat N_2(\zeta_1,\zeta_2,\r)+\hat N_3(\zeta_1,\zeta_2,\zeta_3,\r).
				$$
			For $\mathbf u=(u_1,u_2)\in C_\text{rad}^\infty(\overline{\mathbb B^7})\times C_\text{rad}^\infty(\overline{\mathbb B^7})$, we define
				$$
					\mathbf{N}_1(\mathbf u)(\xi):=
						\begin{pmatrix}
							0
							\\
							N_1\big(u_1(\xi),\xi\big)
						\end{pmatrix},
				$$
				$$
					\mathbf{N}_2(\mathbf u)(\xi):=
						\begin{pmatrix}
							0
							\\
							N_2\big(u_1(\xi),\xi^j\partial_j u_1(\xi),\xi\big)
						\end{pmatrix},
				$$
			and
				$$
					\mathbf{N}_3(\mathbf u)(\xi):=
						\begin{pmatrix}
							0
							\\
							N_3\big(u_1(\xi),\xi^j\partial_j u_1(\xi),u_2(\xi),\xi\big)
						\end{pmatrix}
				$$
			where
				$$
					N_1\big(u_1(\xi),\xi\big)=\hat N_1\big(u_1(\xi),|\xi|),
				$$
				$$
					N_2\big(u_1(\xi),\xi^j\partial_j u_1(\xi),\xi\big)=\hat N_2\big(u_1(\xi),\xi^j\partial_j u_1(\xi),|\xi|),
				$$
			and
				$$
					N_3\big(u_1(\xi),\xi^j\partial_j u_1(\xi),u_2(\xi),\xi\big)=\hat N_3\big(u_1(\xi),\xi^j\partial_j u_1(\xi),u_2(\xi),|\xi|)
				$$
			so that
				$$
					\mathbf{N}=\mathbf{N}_1+\mathbf{N}_2+\mathbf{N}_3.
				$$
			We proceed by proving local Lipschitz bounds on $\mathbf N_1,\mathbf N_2,$ and $\mathbf N_3$ separately. 

			\subsection{Estimates on $\mathbf N_1$}\label{Estimates on N_1}
				We begin with the nonlinear expression $N_1$. By Taylor's theorem with integral remainder, we can write
					$$
						\hat N_1(\zeta_1,\r)=\frac{1}{2}\r^{-1}F_1''\big(\r U_1(\r)\big)\zeta_1^2+\frac{1}{2}\zeta_1^3\int_0^1F_1^{(3)}\big(\r U_1(\r)+t\r\zeta_1\big)(1-t)^2dt.
					$$
				In this form, we begin by proving that the nonlinearity is defined for smooth, radial functions on balls of radius $R\in[1,2]$ satisfying a certain smallness condition.
					
					\begin{lemma} \label{N1 well defined}
						For each $R\in[1,2]$, there exists $\d_0>0$ sufficiently small so that if $\d\in(0,\d_0]$ and $u\in C^\infty_\text{rad}(\overline{\mathbb B^7_R})$ with $\|u\|_{H^5(\mathbb B^7_R)}\leq\d$, then
							$$
								N_1\big(u(\cdot),\cdot\big)\in C^\infty_\text{rad}(\overline{\mathbb B^7_R}).
							$$
					\end{lemma}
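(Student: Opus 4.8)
The plan is to exploit the parity of $F_1$ and its third‑order vanishing at $x=0$, the smallness hypothesis being needed only to keep the relevant arguments away from the poles of $\cot$. Since $\sin(2x)$, $x$ and $x^2\cot(x)$ are odd and $\cot$ has only simple poles, located at $x\in\pi\Z$, the function $F_1$ from \eqref{F1} is odd and real‑analytic on $(-\pi,\pi)$. Expanding at the origin gives $F_1(x)=\tfrac{5}{3}x^3+O(x^5)$, so $F_1''$ is odd, analytic on $(-\pi,\pi)$ and vanishes at $0$; hence $F_1''(x)=x\,\phi(x)$ with $\phi$ even and analytic on $(-\pi,\pi)$, while $F_1^{(3)}$ is even, so $F_1^{(3)}(x)=\chi(x^2)$ for some $\chi\in C^\infty[0,\pi^2)$ (the standard reduction of an even smooth function to a smooth function of the square).

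Next I would fix the threshold $\delta_0$. By the embedding $H^5(\mathbb{B}^7_R)\hookrightarrow L^\infty(\mathbb{B}^7_R)$, valid since $5>\tfrac{7}{2}$, there is $C_R>0$, bounded for $R\in[1,2]$, with $\|u\|_{L^\infty(\mathbb{B}^7_R)}\le C_R\|u\|_{H^5(\mathbb{B}^7_R)}\le C_R\delta$. Using $\rho U_1(\rho)=U(\rho)$, the monotonicity of $U$, and $0\le U(R)\le U(2)=2\arctan(4)<\pi$ for $R\in[1,2]$, I would choose $\delta_0>0$ so small that for all $\delta\le\delta_0$, $R\in[1,2]$, $\xi\in\overline{\mathbb{B}^7_R}$ and $t\in[0,1]$ the quantity
\[ A_t(\xi):=|\xi|\,U_1(|\xi|)+t\,|\xi|\,u(\xi)=U(|\xi|)+t\,|\xi|\,u(\xi) \]
lies in a fixed compact subinterval of $(-\pi,\pi)$; this only uses $-R\,C_R\delta\le A_t(\xi)\le U(R)+R\,C_R\delta$. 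In particular $A_t(\xi)^2$ stays in a compact subinterval of $[0,\pi^2)$, the domain of smoothness of $\chi$.

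Then I would insert the Taylor‑remainder expression for $\hat N_1$ recorded just above the lemma and analyze its two summands via radial representatives, using the classes $C^\infty_e[0,R]$, $C^\infty_o[0,R]$, the fact that $U_1$ is analytic and even on $[0,R]$ (because $U$ is analytic and odd on $(-\sqrt5,\sqrt5)\supset[-R,R]$), and Lemma 2.1 of \cite{G22a}. Let $\hat u\in C^\infty_e[0,R]$ be the radial representative of $u$. For the quadratic term, $\rho^{-1}F_1''(\rho U_1(\rho))=U_1(\rho)\,\phi(\rho U_1(\rho))\in C^\infty_e[0,R]$ (here $\phi$ is even analytic and $\rho U_1(\rho)=U(\rho)\in C^\infty_o[0,R]$ has modulus $\le U(R)<\pi$, unconditionally), and multiplying by $\hat u^2\in C^\infty_e[0,R]$ keeps us in $C^\infty_e[0,R]$. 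For the cubic remainder, the argument $\rho U_1(\rho)+t\rho\hat u(\rho)=\rho\big(U_1(\rho)+t\hat u(\rho)\big)$ is, for each $t$, an element of $C^\infty_o[0,R]$, jointly $C^\infty$ in $(\rho,t)\in[0,R]\times[0,1]$, with modulus $<\pi$ by the previous paragraph; hence
\[ F_1^{(3)}\big(\rho U_1(\rho)+t\rho\hat u(\rho)\big)=\chi\!\left(\rho^2\big(U_1(\rho)+t\hat u(\rho)\big)^2\right) \]
is $C^\infty$ and even in $\rho$, jointly $C^\infty$ on $[0,R]\times[0,1]$, so $\int_0^1 F_1^{(3)}\big(\rho U_1(\rho)+t\rho\hat u(\rho)\big)(1-t)^2\,dt\in C^\infty_e[0,R]$ by differentiation under the integral sign (the integrand is $C^\infty$ on a compact set). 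Multiplying by $\hat u^3\in C^\infty_e[0,R]$ stays in $C^\infty_e[0,R]$, and adding the two pieces gives $N_1(u(\cdot),\cdot)\in C^\infty_\text{rad}(\overline{\mathbb{B}^7_R})$.

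The one genuinely delicate point is regularity at $\rho=0$: the unexpanded expression for $\hat N_1$ carries a factor $\rho^{-3}$, and what must be verified is not boundedness but smoothness through the origin. This forces the combined use of the third‑order vanishing of $F_1$ at $0$ — which, after the Taylor cancellation, leaves only a factor $\rho^{-1}$ to absorb — and of the parity of $F_1''$ and $F_1^{(3)}$ paired with the oddness of $U$, so that every surviving factor is an even smooth function of $\rho$ (or $\rho$ times such a function). The smallness hypothesis plays only the subsidiary role of keeping $A_t$ inside $(-\pi,\pi)$, away from the poles of $\cot$.
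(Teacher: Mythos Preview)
Your proof is correct and follows essentially the same route as the paper: both use the Taylor-remainder form of $\hat N_1$, the Sobolev embedding $H^5\hookrightarrow L^\infty$ to confine the argument to a compact subset of $(-\pi,\pi)$, and parity considerations on $F_1''$ and $F_1^{(3)}$ composed with the odd function $\rho\mapsto\rho U_1(\rho)=U(\rho)$ to obtain membership in $C^\infty_e[0,R]$. Your version is simply more explicit about the parity mechanism (writing $F_1''(x)=x\phi(x)$ and $F_1^{(3)}(x)=\chi(x^2)$), whereas the paper asserts the same facts as the outcome of direct calculations.
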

					\begin{proof}
						Observe that the expression $F_1(x)$ given by Equation \eqref{F1} is defined for $0<|x|<\pi$. A direct calculation verifies that $F_1$ has a removable discontinuity at $x=0$ and that $\lim_{x\to0}F_1(x)=0$. Thus, we extend the domain of $F_1$ to include $x=0$ by setting $F_1(0)=0$. In particular, we have that $F_1\in C^\infty(-\pi,\pi)$.
		
						A direct calculation shows that $\max_{\r\in[0,2]}\r U_1(\r)<\pi$. Upon imposing the condition 
							\begin{equation}
								|\zeta_1|\leq\frac{1}{2}\big(\pi-\max_{\r\in[0,2]}\r U_1(\r)\big)=:A, \label{max of profile}
							\end{equation}
						we ensure the function $\hat N_1:[-A,A]\times[0,R]\to\mathbb R$ given by $(\zeta_1,\r)\mapsto \hat N_1(\zeta_1,\r)$ is defined. 
						
						To ensure $N_1\big(u(\xi),\xi\big)$ yields finite values for $\xi\in\overline{\mathbb B_R^7}$, it suffices to have
							$$
								\|u\|_{L^\infty(\mathbb B_R^7)}\leq A.
							$$ 
						The Sobolev embedding $H^5(\mathbb B_R^7)\hookrightarrow L^\infty(\mathbb B_R^7)$ allows us to conclude that $\|u\|_{L^\infty(\mathbb B_R^7)}\lesssim_R\d_0$. Thus, it is possible to take $\d_0$ sufficiently small to obtain finite values as desired. 
				
						Now, a direct calculation verifies that $(\cdot)^{-1}F_3''\big((\cdot)U_1\big)\in C_e^\infty[0,R]$. Finally, for any $f\in C_o^\infty[0,1]$, it follows that $F_3^{(3)}(f)\in C_e^\infty[0,R]$ from which the claim follows.
					\end{proof}
			
				Having defined $N_1\big(u(\cdot),\cdot\big)$ for $u\in C^\infty_\text{rad}(\overline{\mathbb B_R^7})$, we proceed to prove local Lipschitz bounds on $\mathbf N_1$ from small balls in $\mathcal H^k$ for any $k\geq5$.
					
					\begin{proposition} \label{N1 lipschitz bound}
						Let $k\in\mathbb N$ with $k\geq5$. There exists $\d_0>0$ such that for any $\d\in(0,\d_0]$, the map $\mathbf N_1:\mathcal B_\d^k\to\mathcal H^k$ is defined and satisfies the following local Lipschitz bound
							$$
								\|\mathbf N_1(\mathbf u)-\mathbf N_1(\mathbf v)\|_{\mathcal H^k}\lesssim_k\big(\|\mathbf u\|_{\mathcal H^k}+\|\mathbf v\|_{\mathcal H^k}\big)\|\mathbf u-\mathbf v\|_{\mathcal H^k}.
							$$
					\end{proposition}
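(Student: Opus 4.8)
The plan is to peel off the two powers of the perturbation that sit in $N_1$ by Taylor expansion, and then to reduce the whole estimate to three standard tools: the Banach-algebra property of $H^k(\mathbb B^7)$ for $k\ge4$, the embedding $H^k(\mathbb B^7)\hookrightarrow L^\infty(\mathbb B^7)$, and Moser-type composition/Lipschitz estimates for Nemytskii operators. Writing $U(\r)=\r U_1(\r)$, the computation preceding the proposition gives
\[ \hat N_1(\zeta_1,\r)=\zeta_1^2\,\mc Q(\zeta_1,\r),\qquad \mc Q(\zeta_1,\r):=\tfrac12\,\r^{-1}F_1''\big(U(\r)\big)+\tfrac12\,\zeta_1\int_0^1(1-t)^2F_1^{(3)}\big(U(\r)+t\r\zeta_1\big)\,dt . \]
The structural point to exploit is the \emph{oddness of $F_1$}: it forces $\r\mapsto\r^{-1}F_1''(U(\r))$ to lie in $C_e^\infty[0,2]$, and it makes $F_1^{(3)}$ even, so that $F_1^{(3)}(x)=H(x^2)$ for a function $H$ analytic on $\{|y|<\pi^2\}$; since $U(\r)+t\r\zeta_1=\r(U_1(\r)+t\zeta_1)$, this gives $F_1^{(3)}(U(\r)+t\r\zeta_1)=H(\r^2(U_1(\r)+t\zeta_1)^2)$. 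After the substitution $(\zeta_1,\r)=(u_1(\xi),|\xi|)$ this matters because no factor of the rough radial function $\xi\mapsto U(|\xi|)$ ever appears: $\mc Q$ is then assembled only from $\xi\mapsto U_1(|\xi|)\in C^\infty(\overline{\mathbb B^7})$, from $u_1$, from the smooth polynomial $|\xi|^2$, and from a composition with the smooth $H$. This is what makes the composition estimates available uniformly in $k$; it is genuinely needed for $k\ge6$, since $U(|\cdot|)$ fails to be in $H^5(\mathbb B^7)$ (its expansion at the origin begins with a nonzero multiple of $|\cdot|$).

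First I would check that $\mb N_1$ is well defined and bounded on the smooth elements of $\mathcal B_\d^k$. For $w\in C^\infty_\text{rad}(\overline{\mathbb B^7})$ with $\|w\|_{H^k}\le\d$ and $\d$ small, the embedding $H^k(\mathbb B^7)\hookrightarrow L^\infty(\mathbb B^7)$ gives $\|w\|_{L^\infty}\le A$ with $A$ as in \eqref{max of profile}, so $|U(|\xi|)+t|\xi|w(\xi)|<\pi$ uniformly in $\xi\in\overline{\mathbb B^7}$ and $t\in[0,1]$; hence all compositions stay strictly inside the domains of analyticity of $F_1''$ and $H$. Fixing $t$, the function $h_t:=U_1(|\cdot|)+tw$ lies in $H^k(\mathbb B^7)$ with $\|h_t\|_{H^k}\lesssim1$ and $\|h_t\|_{L^\infty}\lesssim1$; the algebra property and multiplication by the smooth symbol $|\xi|^2$ yield $|\xi|^2h_t^2\in H^k(\mathbb B^7)$ with norm $\lesssim1$ and sup-norm $<\pi^2$; and a Moser composition estimate for $H$ then gives $H(|\cdot|^2h_t^2)\in H^k(\mathbb B^7)$ with norm $\lesssim1$, uniformly in $t$. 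Multiplying by $h_t$ and integrating over $t\in[0,1]$ (the integrand is continuous into $H^k$), and using that $\r\mapsto\r^{-1}F_1''(U(\r))\in C_e^\infty[0,2]$ has a $C^\infty$ radial representative on $\overline{\mathbb B^7}$, shows $\mc Q(w(\cdot),|\cdot|)\in H^k(\mathbb B^7)$ with norm $\lesssim1$. Consequently $N_1(w(\cdot),\cdot)=w^2\,\mc Q(w(\cdot),|\cdot|)\in H^k(\mathbb B^7)$ with norm $\lesssim\|w\|_{H^k}^2$, so $\mb N_1$ maps the smooth part of $\mathcal B_\d^k$ into $\{0\}\times H^k(\mathbb B^7)\subset\mathcal H^k$.

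For the Lipschitz bound, write $\mc Q_u:=\mc Q(u_1(\cdot),|\cdot|)$ and decompose
\[ N_1(u_1(\cdot),\cdot)-N_1(v_1(\cdot),\cdot)=(u_1-v_1)(u_1+v_1)\,\mc Q_u+v_1^2\big(\mc Q_u-\mc Q_v\big). \]
The algebra property and the bound $\|\mc Q_u\|_{H^k}\lesssim1$ from the previous step control the first term by $\lesssim\|u_1-v_1\|_{H^k}(\|u_1\|_{H^k}+\|v_1\|_{H^k})$. For the second term it suffices to prove $\|\mc Q_u-\mc Q_v\|_{H^k}\lesssim\|u_1-v_1\|_{H^k}$, after which one absorbs the extra factor $\|v_1\|_{H^k}\le\d\lesssim1$. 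Inspecting the formula for $\mc Q$, its only nonlinear dependence on $w$ is through the prefactor $w$ and through $H(|\cdot|^2(U_1(|\cdot|)+tw)^2)$; the former is trivially Lipschitz, and for the latter one chains the Lipschitz bound $\||\cdot|^2((U_1(|\cdot|)+tu_1)^2-(U_1(|\cdot|)+tv_1)^2)\|_{H^k}\lesssim\|u_1-v_1\|_{H^k}$ (difference of squares plus algebra) with the Lipschitz estimate $\|H(g_1)-H(g_2)\|_{H^k}\lesssim\|g_1-g_2\|_{H^k}$ valid on bounded subsets of $H^k(\mathbb B^7)\cap L^\infty$, which follows by writing $H(g_1)-H(g_2)=(g_1-g_2)\int_0^1H'(g_2+\theta(g_1-g_2))\,d\theta$ and invoking the algebra property together with the composition bound for $H'$; all of this is uniform in $t\in[0,1]$. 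Integrating in $t$ gives the claim. Combining the two terms and noting that $\|\mb N_1(\mathbf u)-\mb N_1(\mathbf v)\|_{\mathcal H^k}=\|N_1(u_1(\cdot),\cdot)-N_1(v_1(\cdot),\cdot)\|_{H^{k-1}}\le\|N_1(u_1(\cdot),\cdot)-N_1(v_1(\cdot),\cdot)\|_{H^k}$ yields the estimate for smooth data; since smooth radial pairs are dense in $\mathcal B_\d^k$, the Lipschitz bound extends $\mb N_1$ uniquely to all of $\mathcal B_\d^k$.

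The only genuinely delicate step is the structural one in the first paragraph: one must present $\hat N_1$ so that, after passing to the variable $\xi$, the rough function $U(|\xi|)$ is eliminated in favor of the smooth $U_1(|\xi|)$ and powers of $|\xi|^2$; everything after that is routine Nemytskii calculus on $H^k(\mathbb B^7)$ for $k\ge4$. The same mechanism — the parity of $F_1$, and correspondingly of $F_2$ and $F_3$ in their relevant arguments — is what will drive the analogous estimates for $\mb N_2$ and $\mb N_3$.
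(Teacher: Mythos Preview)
Your argument is correct. The route, however, differs from the paper's. The paper introduces smooth cutoffs $\chi_1(\zeta_1)$ and $\chi_2(\xi)$ to produce a globally defined function $\mathcal N_1\in C^\infty(\mathbb R\times\mathbb R^7)$ with $\mathcal N_1(0,\xi)=\partial_1\mathcal N_1(0,\xi)=0$, and then invokes Lemma~2.13 of \cite{BDS19} as a black box to obtain the Lipschitz bound directly in $H^{k-1}(\mathbb B^7)$; the smallness of $\delta_0$ guarantees that the cutoffs are inactive on $\mathcal B_\delta^k$. You instead unpack the mechanism by hand: factor $\hat N_1=\zeta_1^2\mathcal Q$, use the oddness of $F_1$ to write every occurrence of $F_1''$ and $F_1^{(3)}$ as a smooth function of $\rho^2(U_1(\rho)+t\zeta_1)^2$, and then run Moser composition and algebra estimates in $H^k(\mathbb B^7)$. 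Your approach is more self-contained and makes explicit why no regularity is lost at $\xi=0$ (the parity eliminates any composition with the non-smooth $\xi\mapsto U(|\xi|)$, which indeed fails to be in $H^5(\mathbb B^7)$); the paper's approach is shorter once the external lemma is available and also yields the bound with $\|u_1\|_{H^{k-1}}$ in place of $\|u_1\|_{H^k}$, though this refinement is immaterial for the proposition as stated. Both approaches rely on the same structural fact --- that $\hat N_1(\zeta_1,\cdot)\in C_e^\infty[0,R]$ jointly in $\zeta_1$ --- which the paper records separately in Lemma~\ref{N1 well defined}.
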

					\begin{proof}
						In what follows, we note that all of the pointwise expressions are defined due to the Sobolev embedding $H^k(\mathbb B^7)\hookrightarrow C^{k-4}(\mathbb B^7)$ for $k\geq4$.
						
						We prove this by an application of Lemma 2.13 of \cite{BDS19}. To that end, we fix two smooth cutoff functions $\chi_1:\R\to\R$ and $\chi_2:\mathbb R^7\to\R$ with the properties that 
							\begin{enumerate}
								\item $\chi_1(\zeta_1)=1$ for $|\zeta_1|\leq\frac{A}{2}$, $\chi_1(\zeta_1)=0$ for $|\zeta_1|\geq\frac{2A}{3}$, $\chi_1$ decreases smoothly in the transition region, and
								\item $\chi_2(\xi)=1$ for $|\xi|\leq\frac{3}{2}$, $\chi_2(\xi)=0$ for $|\xi|\geq\frac{5}{3}$, $\chi_2$ decreases smoothly and radially in the transition region.
							\end{enumerate}
						Now, consider the auxiliary quantity $\mathcal N_{1}:\R\times\mathbb R^7\to\R$ defined by 
							$$
								\mathcal N_1(\zeta_1,\xi):=
									\begin{cases}
										\chi_1(\zeta_1)\chi_2(\xi)N_1(\zeta_1,\xi),& (\zeta_1,\xi)\in[-A,A]\times\mathbb B_2^7
										\\
										0,&(\zeta_1,\xi)\in\mathbb R\times\mathbb R^7\setminus\big([-A,A]\times\mathbb B_2^7\big)
									\end{cases}.
							$$
						A direct calculation verifies that $\mathcal N_1\in C^\infty(\mathbb R\times\mathbb R^7)$ and that $\mathcal N_1(0,\xi)=\partial_1\mathcal N_1(0,\xi)=0$ for all $\xi\in\mathbb R^7$. Thus, Lemma 2.13 of \cite{BDS19} implies
							$$
								\big\|\mathcal N_1\big(u_1(\cdot),\cdot\big)-\mathcal N_1\big(v_1(\cdot),\cdot\big)\big\|_{H^{k-1}(\mathbb B^7)}\lesssim_k\big(\|u_1\|_{H^{k-1}(\mathbb B^7)}+\|v_1\|_{H^{k-1}(\mathbb B^7)}\big)\|u_1-v_1\|_{H^{k-1}(\mathbb B^7)}.
							$$
						By the Sobolev embedding $H^5(\mathbb B^7)\hookrightarrow L^\infty(\mathbb B^7)$, we can take $\d_0$ from Lemma \ref{N1 well defined} with $R=1$ smaller if necessary to ensure
							$$
								\|u\|_{L^\infty(\mathbb B^7)}\leq\frac{A}{2}
							$$
						for all $u\in\mathcal B_\d^k$. The claim then follows after noting that $\mathcal N_1\big(u_1(\xi),\xi\big)=N_1\big(u_1(\xi),\xi\big)$ for all $\xi\in\mathbb B^7$ and $\mathbf u\in\mathcal B_\d^k$.
			\end{proof}

		\subsection{Estimates on $\mathbf N_2$} \label{Estimates on N_2}
				We continue with the nonlinear expression $N_2$. For ease of notation, we set 
					$$
						\mu_2(x):=-2\big(1-x\cot(x)\big)
					$$
				so that
					$$
						F_2(x,y)=\mu_2(x)y.
					$$
				By Taylor's theorem with integral remainder, we write
					\begin{align}
					\begin{split}
						\hat N_2(\zeta_1,\zeta_2,\r)=&\r^{-1}\mu_2'\big(\r U_1(\r)\big)\zeta_1\zeta_2+\frac{1}{2}\r U_1'(\r)\mu_2''\big(\r U_1(\r)\big)\zeta_1^2
						\\
						&+\zeta_1^2\zeta_2\int_0^1\mu_2''\big(\r U_1(\r)+t\r \zeta_1\big)(1-t)dt
						\\
						&+\frac{1}{2}\r U_1'(\r)\zeta_1^3\int_0^1\r\mu_2^{(3)}\big(\r U_1(\r)+t\r \zeta_1\big)(1-t)^2dt. \label{integral form of hN2}
					\end{split}
					\end{align}
				In this form, we can follow the calculations in Section \ref{Estimates on N_1} and prove that this nonlinear term is also defined for smooth, radial functions with minor modifications. For a smooth function $u$, we write $\Lambda u(\xi):=\xi^j\partial_ju(\xi)$.
					
					\begin{lemma} \label{N2 well defined}
						For each $R\in[1,2]$, there exists $\d_0>0$ sufficiently small so that if $\d\in(0,\d_0]$ and $u\in C^\infty_\text{rad}(\overline{\mathbb B^7_R})$ with $\|u\|_{H^5(\mathbb B^7_R)}\leq\d$, then
							$$
								N_2\big(u(\cdot),\Lambda u(\cdot),\cdot\big)\in C^\infty_\text{rad}(\overline{\mathbb B^7_R}).
							$$
					\end{lemma}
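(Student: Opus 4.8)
The plan is to prove this lemma by mirroring the proof of Lemma~\ref{N1 well defined}; the one new feature here is the derivative slot $\zeta_2$, which will be filled by $\Lambda u$. The starting point is the elementary observation that $\mu_2(x)=-2(1-x\cot x)$ has a removable singularity at $x=0$: from $x\cot x=1-\tfrac13 x^2-\tfrac{1}{45}x^4-\cdots$ one reads off that $\mu_2(0)=0$, that $\mu_2$ extends to an \emph{even} function in $C^\infty(-\pi,\pi)$ vanishing to second order at the origin, and hence that $\mu_2'$ vanishes to first order there. I would then take as the basic object the integral--remainder representation \eqref{integral form of hN2} of $\hat N_2$ and analyze its four summands one at a time.

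For the domain, I would argue exactly as in the $N_1$ case. Since $\r U_1(\r)=U(\r)$ and $\max_{\r\in[0,2]}U(\r)<\pi$, there is an $A>0$ such that whenever $|\zeta_1|\le A$, $\r\in[0,R]$ and $t\in[0,1]$, the quantity $\r U_1(\r)+t\r\zeta_1$ lies in a fixed compact subset of $(-\pi,\pi)$; hence $\hat N_2$ is well defined on $[-A,A]\times\R\times[0,R]$, no constraint on $\zeta_2$ being needed because $F_2$ is affine in its second argument. The Sobolev embedding $H^5(\mathbb B^7_R)\hookrightarrow L^\infty(\mathbb B^7_R)$ then lets us shrink $\delta_0$ so that $\|u\|_{L^\infty(\mathbb B^7_R)}\le A$ for all admissible $u$, whence $N_2\big(u(\xi),\Lambda u(\xi),\xi\big)$ is finite for every $\xi\in\overline{\mathbb B^7_R}$. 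I would also note at this point that $\Lambda u\in C^\infty_\text{rad}(\overline{\mathbb B^7_R})$: its radial representative is $\r\mapsto\r\hat u'(\r)$, which lies in $C_e^\infty[0,R]$ because $\hat u\in C_e^\infty[0,R]$ forces $\hat u'\in C_o^\infty[0,R]$.

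The core of the proof is to show that each summand of \eqref{integral form of hN2}, after the substitution $\zeta_1=u(\xi)$, $\zeta_2=\Lambda u(\xi)$, $\r=|\xi|$, belongs to $C^\infty_\text{rad}(\overline{\mathbb B^7_R})$. Since every resulting expression depends on $\xi$ only through $|\xi|$, radiality is automatic and it suffices to verify that the corresponding radial profile is an \emph{even} smooth function of $\r$ on $[0,R]$. Here I would repeatedly use the parities of $\r U_1(\r)=U(\r)$ (odd, vanishing at $\r=0$), of $\r U_1'(\r)$ (even), and of the derivatives $\mu_2^{(k)}$ (even for $k$ even, odd for $k$ odd). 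Concretely: (i) $\r^{-1}\mu_2'(\r U_1(\r))$ is even and smooth on $[0,R]$, since $\mu_2'(U(\r))$ is odd and vanishes at $\r=0$ and may therefore be divided by $\r$; multiplying by $u\,\Lambda u\in C^\infty_\text{rad}$ keeps us in $C^\infty_\text{rad}$. (ii) $\tfrac12\,\r U_1'(\r)\,\mu_2''(\r U_1(\r))$ is even and smooth on $[0,R]$, and multiplying by $u^2$ is harmless. (iii) In the first integral remainder, writing $f:=\r\zeta_1$, the argument $\r U_1(\r)+tf$ is odd in $\r$ and stays in a compact subset of $(-\pi,\pi)$ uniformly in $t\in[0,1]$, which justifies differentiating under the integral sign to all orders; since $\mu_2''$ is even, $\int_0^1\mu_2''(\r U_1(\r)+t\r\zeta_1)(1-t)\,dt$ has an even smooth profile, and multiplying by $u^2\,\Lambda u$ stays in $C^\infty_\text{rad}$. (iv) Likewise $\r\,\mu_2^{(3)}(\r U_1(\r)+t\r\zeta_1)$ is even (an odd function of an odd argument, times $\r$), so $\tfrac12\,\r U_1'(\r)\int_0^1\r\,\mu_2^{(3)}(\r U_1(\r)+t\r\zeta_1)(1-t)^2\,dt$ is even and smooth, and multiplying by $u^3$ is harmless. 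Summing (i)--(iv) yields $N_2\big(u(\cdot),\Lambda u(\cdot),\cdot\big)\in C^\infty_\text{rad}(\overline{\mathbb B^7_R})$.

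The one genuinely delicate point, as already in Lemma~\ref{N1 well defined}, is the interplay between the negative powers of $\r$ in \eqref{integral form of hN2} and the vanishing orders of $\mu_2$ at the origin: one must confirm that $\mu_2'$ vanishes to precisely first order so that, together with the vanishing of $\r U_1(\r)$ at $\r=0$, the $\r^{-1}$ in the leading term is exactly compensated and the profile descends to an even smooth function on $[0,R]$ rather than merely a smooth function on $(0,R]$; likewise that the extra factor of $\r$ inside the last integral is matched by the odd parity of $\mu_2^{(3)}$. Everything else is a routine adaptation of the proof of Lemma~\ref{N1 well defined}.
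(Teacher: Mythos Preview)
Your proposal is correct and follows essentially the same approach as the paper's proof: both extend $\mu_2$ smoothly through $x=0$, invoke the Sobolev embedding to keep the argument in a compact subset of $(-\pi,\pi)$, and then verify term by term in the integral--remainder representation \eqref{integral form of hN2} that each factor has an even smooth radial profile on $[0,R]$. Your treatment is simply more explicit about the parity bookkeeping (e.g.\ spelling out why $\r^{-1}\mu_2'(\r U_1(\r))$ and $\r\,\mu_2^{(3)}(\cdot)$ land in $C_e^\infty[0,R]$), whereas the paper compresses this into the two assertions that $(\cdot)U_1'(\cdot),\ (\cdot)^{-1}\mu_2'((\cdot)U_1(\cdot)),\ \mu_2''((\cdot)U_1(\cdot))\in C_e^\infty[0,R]$ and that $\mu_2''(f),\ (\cdot)\mu_2^{(3)}(f)\in C_e^\infty[0,R]$ for odd $f$.
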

					\begin{proof}
						As in the beginning of proof of Lemma \ref{N1 well defined}, we extend the domain of $\mu_2$ to include $0$ by setting $\mu_2(0)=0$ so that we have $\mu_2\in C^\infty(-\pi,\pi)$. For each $R\in[1,2]$, we can choose $\d_0>0$ as in Lemma \ref{N1 well defined} to ensure
							$$
								\|u\|_{L^\infty(\mathbb B_R^7)}\leq A.
							$$ 
						Thus, according to Equation \eqref{integral form of hN2}, $N_2\big(u(\xi),\xi^j\partial_j u(\xi),\xi\big)$ is defined for all $\xi\in\overline{\mathbb B_R^7}$. Direct calculations verify that 
							$$
								(\cdot)U_1'(\cdot),(\cdot)^{-1}\mu_2'\big((\cdot)U_1(\cdot)\big),\mu_2''\big((\cdot)U_1(\cdot)\big)\in C_e^\infty[0,R].
							$$ 
						Lastly, for any $f\in C_o^\infty[0,1]$, it follows that $\mu_2''(f),(\cdot)\mu_2^{(3)}(f)\in C_e^\infty[0,R]$ from which the claim follows.
					\end{proof}
			
				Having defined $N_2\big(u(\cdot),\Lambda u(\cdot),\cdot\big)$ for $u\in C^\infty_\text{rad}(\overline{\mathbb B_R^7})$, we prove local Lipschitz bounds on $\mathbf N_2$ from small balls in $\mathcal H^k$ for any $k\geq5$ as follows.
					
					\begin{proposition} \label{N2 lipschitz bound}
						Let $k\in\mathbb N$ with $k\geq5$. There exists $\d_0>0$ such that for any $\d\in(0,\d_0]$, the map $\mathbf N_2:\mathcal B_\d^k\to\mathcal H^k$ is defined and satisfies the following local Lipschitz bound
							$$
								\|\mathbf N_2(\mathbf u)-\mathbf N_2(\mathbf v)\|_{\mathcal H^k}\lesssim_k\big(\|\mathbf u\|_{\mathcal H^k}+\|\mathbf v\|_{\mathcal H^k}\big)\|\mathbf u-\mathbf v\|_{\mathcal H^k}.
							$$
					\end{proposition}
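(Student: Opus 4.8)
The plan is to mimic the proof of Proposition~\ref{N1 lipschitz bound}, with one extra bookkeeping step accounting for the fact that $N_2$ now also depends on $\Lambda u_1(\xi)=\xi^j\partial_j u_1(\xi)$, which only lies in $H^{k-1}(\mathbb B^7)$ when $u_1\in H^k(\mathbb B^7)$. The starting point is to read off from the integral representation~\eqref{integral form of hN2} that $\hat N_2$ is \emph{affine} in $\zeta_2$:
\[ \hat N_2(\zeta_1,\zeta_2,\r)=\zeta_2\,P(\zeta_1,\r)+Q(\zeta_1,\r), \]
where
\[ P(\zeta_1,\r):=\r^{-1}\mu_2'\big(\r U_1(\r)\big)\zeta_1+\zeta_1^2\int_0^1\mu_2''\big(\r U_1(\r)+t\r\zeta_1\big)(1-t)\,dt \]
and $Q(\zeta_1,\r)$ is the sum of the remaining two terms of~\eqref{integral form of hN2}. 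One checks directly that $P(0,\r)=0$ and $Q(0,\r)=\partial_{\zeta_1}Q(0,\r)=0$, and---exactly as in the proof of Lemma~\ref{N2 well defined}, using the spaces $C_e^\infty[0,R]$, $C_o^\infty[0,R]$---that all of the $\r$-dependent coefficients (the $\r^{-1}\mu_2'(\r U_1)$ prefactor, $\r U_1'\mu_2''(\r U_1)$, and the integral remainders composed with odd functions) extend smoothly across $\xi=0$, so that $P(u_1(\cdot),\cdot)$ and $Q(u_1(\cdot),\cdot)$ are smooth radial functions on $\overline{\mathbb B^7}$.

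Next I would freeze these into globally defined smooth auxiliary functions $\mathcal P,\mathcal Q\colon\mathbb R\times\mathbb R^7\to\mathbb R$, using the same cutoffs $\chi_1$ (in $\zeta_1$) and $\chi_2$ (in $\xi$) as in the proof of Proposition~\ref{N1 lipschitz bound}, so that $\mathcal P,\mathcal Q$ agree with $P,Q$ on $[-A,A]\times\mathbb B_2^7$ and vanish outside, with $\mathcal P(0,\cdot)=0$ and $\mathcal Q(0,\cdot)=\partial_1\mathcal Q(0,\cdot)=0$. By the Sobolev embedding $H^{k-1}(\mathbb B^7)\hookrightarrow L^\infty(\mathbb B^7)$ (valid since $k-1\ge4>7/2$), shrinking $\delta_0$ if necessary forces $\|u_1\|_{L^\infty(\mathbb B^7)}\le A/2$ for every $\mathbf u\in\mathcal B_\delta^k$; hence the cutoffs are transparent and
\[ N_2\big(u_1(\xi),\Lambda u_1(\xi),\xi\big)=\Lambda u_1(\xi)\,\mathcal P\big(u_1(\xi),\xi\big)+\mathcal Q\big(u_1(\xi),\xi\big) \]
for $\xi\in\mathbb B^7$. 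Together with Lemma~\ref{N2 well defined} and the fact that the first component of $\mathbf N_2(\mathbf u)$ vanishes, this shows $\mathbf N_2$ maps $\mathcal B_\delta^k$ into $\mathcal H^k$.

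For the Lipschitz estimate, everything is measured in $H^{k-1}(\mathbb B^7)$, which is a Banach algebra for $k-1\ge4$. Lemma~2.13 of~\cite{BDS19} applied to $\mathcal Q$ gives
\[ \big\|\mathcal Q(u_1,\cdot)-\mathcal Q(v_1,\cdot)\big\|_{H^{k-1}(\mathbb B^7)}\lesssim_k\big(\|u_1\|_{H^{k-1}}+\|v_1\|_{H^{k-1}}\big)\|u_1-v_1\|_{H^{k-1}}, \]
while $\mathcal P(0,\cdot)=0$ yields both $\|\mathcal P(u_1,\cdot)-\mathcal P(v_1,\cdot)\|_{H^{k-1}}\lesssim_k\|u_1-v_1\|_{H^{k-1}}$ and $\|\mathcal P(v_1,\cdot)\|_{H^{k-1}}\lesssim_k\|v_1\|_{H^{k-1}}$ on $\mathcal B_\delta^k$. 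For the bilinear piece I would write
\[ \Lambda u_1\,\mathcal P(u_1,\cdot)-\Lambda v_1\,\mathcal P(v_1,\cdot)=\Lambda u_1\big(\mathcal P(u_1,\cdot)-\mathcal P(v_1,\cdot)\big)+\Lambda(u_1-v_1)\,\mathcal P(v_1,\cdot), \]
and use the algebra property of $H^{k-1}(\mathbb B^7)$ together with the elementary bound $\|\Lambda w\|_{H^{k-1}(\mathbb B^7)}\lesssim\|w\|_{H^k(\mathbb B^7)}$ to estimate the right-hand side by $\|u_1\|_{H^k}\|u_1-v_1\|_{H^{k-1}}+\|u_1-v_1\|_{H^k}\|v_1\|_{H^{k-1}}$. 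Adding the $\mathcal Q$-contribution and bounding all norms by $\|\mathbf u\|_{\mathcal H^k}$, $\|\mathbf v\|_{\mathcal H^k}$, $\|\mathbf u-\mathbf v\|_{\mathcal H^k}$ gives the claimed bound.

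The main obstacle is precisely the derivative loss in the last step: the argument must be organized so that $\Lambda u_1$ never enters a Nemytskii composition but appears only as an $H^{k-1}$-multiplier, which is exactly why the affine-in-$\zeta_2$ decomposition of~\eqref{integral form of hN2} is essential here (and why the analogous bound for $\mathbf N_3$, where the derivative enters quadratically, will require a slightly more careful version of the same idea). The remaining point---smoothness of the $\r$-dependent coefficients across the origin despite the $\r^{-1}$ and $\r^{-2}$ prefactors---is already contained in Lemma~\ref{N2 well defined} and can simply be quoted.
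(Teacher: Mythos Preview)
Your proof is correct and follows essentially the same approach as the paper: both split the terms of \eqref{integral form of hN2} into those not involving $\zeta_2$ (your $Q$), handled directly by Lemma~2.13 of \cite{BDS19}, and the $\zeta_2$-linear terms (your $\zeta_2 P$), handled via the algebra property of $H^{k-1}(\mathbb B^7)$ after establishing Lipschitz and smallness bounds on the coefficient. Your explicit naming of $P$ and $Q$ and the observation that $\hat N_2$ is affine in $\zeta_2$ make the structure slightly more transparent, but the argument is the same.
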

					\begin{proof}
						Take $\d_0$ as in Proposition \ref{N1 lipschitz bound}. Using the cutoff functions from the proof of Proposition \ref{N1 lipschitz bound}, consider the auxiliary quantity $\mathcal N_{2}:\R\times\R\times\mathbb R^7\to\R$ defined by 
							$$
								\mathcal N_2(\zeta_1,\zeta_2,\xi):=
									\begin{cases}
										\chi_1(\zeta_1)\chi_2(\xi)N_2(\zeta_1,\zeta_2,\xi),& (\zeta_1,\zeta_2,\xi)\in[-A,A]\times\R\times\mathbb B_2^7
										\\
										0,&(\zeta_1,\zeta_2,\xi)\in\R\times\mathbb R\times\mathbb R^7\setminus\big([-A,A]\times\R\times\mathbb B_2^7\big)
									\end{cases}.
							$$
						A direct calculation verifies that $\mathcal N_2\in C^\infty(\mathbb R\times\mathbb R\times\mathbb R^7)$ and that $\mathcal N_2(0,0,\xi)=\partial_1\mathcal N_2(0,0,\xi)=\partial_2\mathcal N_2(0,0,\xi)=0$ for all $\xi\in\mathbb R^7$. Repeating the argument from the proof of Proposition \ref{N1 lipschitz bound} on any term in Equation \eqref{integral form of hN2} not involving $\zeta_2$ yields the desired bound. Thus, it remains to establish the desired bound for the remaining terms, i.e.,
							$$
								\chi_1(\zeta_1)\chi_2(\xi)\bigg(|\xi|^{-1}\mu_2'\big(|\xi| U_1(|\xi|)\big)\zeta_1\zeta_2+\zeta_1^2\zeta_2\int_0^1\mu_2''\big(|\xi| U_1(|\xi|)+t|\xi| \zeta_1\big)(1-t)dt\bigg).
							$$
						For the first term, we write
							\begin{align*}
								\chi_2(\xi)&|\xi|^{-1}\mu_2'\big(|\xi| U_1(|\xi|)\big)\Big(\chi_1\big(u_1(\xi)\big)u_1(\xi)\xi^j\partial_j u_1(\xi)-\chi_1\big(v_1(\xi)\big)v_1(\xi)\xi^j\partial_j v_1(\xi)\Big)
								\\
								=&\chi_2(\xi)|\xi|^{-1}\mu_2'\big(|\xi| U_1(|\xi|)\big)\chi_1\big(u_1(\xi)\big)u_1(\xi)\Big(\xi^j\partial_j u_1(\xi)-\xi^j\partial_j v_1(\xi)\Big)
								\\
								&+\chi_2(\xi)|\xi|^{-1}\mu_2'\big(|\xi| U_1(|\xi|)\big)\Big(\chi_1\big(u_1(\xi)\big)u_1(\xi)-\chi_1\big(v_1(\xi)\big)v_1(\xi)\Big)\xi^j\partial_j v_1(\xi).
							\end{align*}
						By our choice of $\delta_0$ and the algebra property of $H^{k-1}(\mathbb B^7)$ for $k\geq4$, taking an $H^{k-1}(\mathbb B^7)$-norm yields
							\begin{align*}
								\Big\|\chi_2&(\cdot)|\cdot|^{-1}\mu_2'\big(|\cdot| U_1(|\cdot|)\big)\Big(\chi_1\big(u_1(\cdot)\big)u_1(\cdot)\Lambda u_1(\cdot)-\chi_1\big(v_1(\cdot)\big)v_1(\cdot)\Lambda v_1(\cdot)\Big)\Big\|_{H^{k-1}(\mathbb B^7)}
								\\
								\lesssim&\|u_1\|_{H^{k-1}(\mathbb B^7)}\|u_1-v_1\|_{H^{k}(\mathbb B^7)}+\|v_1\|_{H^{k}(\mathbb B^7)}\|u_1-v_1\|_{H^{k-1}(\mathbb B^7)}
							\end{align*}
						for all $\mathbf u,\mathbf v\in\mc B_\d^k$. For the final term, we note that this term is of the form $\zeta_2N(\zeta_1,\xi)$ with $N$ satisfying the desired local Lipschitz bound using the same argument as in the proof of Proposition \ref{N1 lipschitz bound}. Thus, upon writing
							\begin{align*}
								\xi^j\partial_j u_1&(\xi)N\big(u_1(\xi),\xi\big)-\xi^j\partial_j v_1(\xi)N\big(v_1(\xi),\xi\big)
								\\
								=&\big(\xi^j\partial_j u_1(\xi)-\xi^j\partial_j v_1(\xi)\big)N\big(u_1(\xi),\xi\big)-\xi^j\partial_j v_1(\xi)\Big(N\big(u_1(\xi),\xi\big)-N\big(v_1(\xi),\xi\big)\Big)
							\end{align*}
						and taking an $H^{k-1}(\mathbb B^7)$-norm, we obtain
							\begin{align*}
								\|\xi^j\partial_j u_1&N\big(\xi,u_1\big)-\xi^j\partial_j v_1N\big(\xi,v_1\big)\|_{H^{k-1}(\mathbb B^7)}
								\\
								&\lesssim\|u_1\|_{H^{k-1}(\mathbb B^7)}\|u_1-v_1\|_{H^{k}(\mathbb B^7)}+\|v_1\|_{H^{k}(\mathbb B^7)}\|u_1-v_1\|_{H^{k-1}(\mathbb B^7)}.
							\end{align*}
						The claim then follows as a consequence.
			\end{proof}
								
		\subsection{Estimates on $\mathbf N_3$}\label{Estimates on N_3}
				We end our nonlinear estimates with the nonlinear expression $N_3$. Again for notational convenience, we write $\mu_3(x)=-\cot(x)$
				so that
					\[
						F_3(x,y,z)=\mu_3(x)\big(z^2-y^2\big).
					\]
				By Taylor's theorem with integral remainder, we write
					\begin{align*}
						\hat N_1(\zeta_1,\zeta_2,\zeta_3,\r)=&\r\mu_3\big(\r U_1(\r)+\r\zeta_1\big)\zeta_3^2-\r^{-1}\mu_3\big(\r U_1(\r)+\r\zeta_1\big)\zeta_2^2
						\\
						&+2U_2(\r)\r^2\mu_3'\big(\r U_1(\r)\big)\zeta_1\zeta_3-2\r^{-1}U_1'(\r)\r^2\mu_3'\big(\r U_1(\r)\big)\zeta_1\zeta_2
						\\
						&+\Big(U_2(\r)^2-U_1'(\r)^2\Big)\frac{1}{2}\r^3\mu_3''\big(\r U_1(\r)\big)\zeta_1^2
						\\
						&+2U_2(\r)\zeta_1^2\zeta_3\int_0^1\r^3\mu_3''\big(\r U_1(\r)+t\r\zeta_1\big)dt
						\\
						&-2\r^{-1}U_1'(\r)\zeta_1^2\zeta_2\int_0^1\r^3\mu_3''\big(\r U_1(\r)+t\r\zeta_1\big)dt
						\\
						&+\Big(U_2(\r)^2-U_1'(\r)^2\Big)\frac{1}{2}\zeta_1^3\int_0^1\r^4\mu_3^{(3)}\big(\r U_1(\r)+t\r\zeta_1\big)dt
					\end{align*}
				Again, we follow the calculations in Sections \ref{Estimates on N_1} and \ref{Estimates on N_2} to prove that this nonlinear term is also well-defined for smooth, radial functions with minor modifications.
					
					\begin{lemma} \label{N23well defined}
						For each $R\in[1,2]$, there exists $\d_0>0$ sufficiently small so that if $\d\in(0,\d_0]$ and $u_1,u_2\in C^\infty_\text{rad}(\overline{\mathbb B^7_R})$ with $\|(u_1,u_2)\|_{H^5(\mathbb B^7_R)\times H^4(\mathbb B^7_R)}\leq\d$, then
							$$
								N_3\big(u_1(\cdot),\Lambda u_1(\cdot),u_2(\cdot)\cdot\big)\in C^\infty_\text{rad}(\overline{\mathbb B^7_R}).
							$$
					\end{lemma}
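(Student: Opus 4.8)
The plan is to follow the template of the proofs of Lemma~\ref{N1 well defined} and Lemma~\ref{N2 well defined}, the one genuinely new feature being that $\mu_3(x)=-\cot x$ has a \emph{genuine pole} at $x=0$ rather than a removable singularity. Consequently $\hat N_3$ does not extend to a function on a product domain $[-A,A]^3\times[0,R]$, and the pole is cancelled only \emph{after} substituting the radial functions, by the explicit powers of $\r$ in the Taylor expansion of $\hat N_3$ displayed before the statement together with the quadratic vanishing of $\Lambda u_1$ at the origin.

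First I would fix the constants. As in Lemma~\ref{N1 well defined}, the Sobolev embedding $H^5(\mathbb B_R^7)\hookrightarrow L^\infty(\mathbb B_R^7)$ lets one choose $\d_0$ so small that $\|u_1\|_{L^\infty(\mathbb B_R^7)}<\min\{A,m\}$, where $A$ is the constant from \eqref{max of profile} and $m:=\min_{\r\in[0,2]}\tilde U(\r)>0$ (recall $U_1=\tilde U>0$ on $[0,2]$). Using $\max_{\r\in[0,2]}\r U_1(\r)<\pi$, this guarantees $U_1(|\xi|)+t\,u_1(\xi)>0$ and $0\le|\xi|\big(U_1(|\xi|)+t\,u_1(\xi)\big)<\pi$ for all $\xi\in\overline{\mathbb B_R^7}$ and $t\in[0,1]$; in particular the argument of every factor $\mu_3^{(j)}$ appearing in the expansion lies in $[0,\pi)$ and vanishes only at $\xi=0$.

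The heart of the argument is a reorganisation of that expansion. For $j=0,1,2,3$ set $g_j(w):=w^{j+1}\mu_3^{(j)}(w)$, so $g_0(w)=-w\cot w$, $g_1(w)=(w/\sin w)^2$, $g_2(w)=-2\cos w\,(w/\sin w)^3$ and $g_3(w)=2w^2(w/\sin w)^2+6\cos^2 w\,(w/\sin w)^4$; each $g_j$ is \emph{even} and real-analytic on $(-\pi,\pi)$, so $g_j(w)=\tilde g_j(w^2)$ with $\tilde g_j$ smooth on $[0,\pi^2)$. In every term of the expansion I would write $\mu_3^{(j)}(\,\cdot\,)=g_j(\,\cdot\,)/(\,\cdot\,)^{j+1}$ and substitute $\zeta_1=u_1(\xi)$, $\zeta_2=\Lambda u_1(\xi)$, $\zeta_3=u_2(\xi)$, $\r=|\xi|$, noting that $\Lambda u_1(\xi)=|\xi|^2 w_1(\xi)$ with $w_1\in C^\infty_\text{rad}(\overline{\mathbb B_R^7})$ (since $\Lambda u_1(\xi)=|\xi|\hat u_1'(|\xi|)$ and $\hat u_1'\in C_o^\infty[0,R]$). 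A count of the powers of $\r$ then shows: in the six terms not carrying $\zeta_2$ the explicit factor of $\r$ cancels $(\,\cdot\,)^{j+1}$ exactly; in the two terms carrying $\zeta_2$ the explicit $\r$-power is one short, but $\zeta_2=|\xi|^2 w_1$ supplies $|\xi|^2$, which restores the missing power and leaves a surviving $|\xi|$ that pairs with the $C_o^\infty$-factor $U_1'(|\xi|)$ to give $|\xi|\,U_1'(|\xi|)\in C_e^\infty$. After this every term of $N_3\big(u_1(\cdot),\Lambda u_1(\cdot),u_2(\cdot),\cdot\big)$ is exhibited as a finite product, possibly integrated over $t\in[0,1]$, of four manifestly radial-smooth types of factor: compositions $g_j\big(|\xi|(U_1(|\xi|)+t\,u_1(\xi))\big)=\tilde g_j\big(|\xi|^2(U_1(|\xi|)+t\,u_1(\xi))^2\big)$, which lie in $C^\infty_\text{rad}(\overline{\mathbb B_R^7})$ since $\tilde g_j$ is smooth and the argument is radial smooth with range in $[0,\pi^2)$; reciprocals $\big(U_1(|\xi|)+t\,u_1(\xi)\big)^{-(j+1)}$, radial smooth because $U_1+t\,u_1>0$; fixed profile factors $U_2$, $U_2^2-(U_1')^2$, $U_1^{-(j+1)}$ (radial smooth since $U_1(0)\neq0$) and $|\xi|\,U_1'(|\xi|)$; and the perturbation factors $u_1$, $u_2$, $w_1$ and powers of $|\xi|^2$. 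Since products, and $t$-integrals of integrands that are jointly $C^\infty$ in $(t,\xi)$, of functions in $C^\infty_\text{rad}(\overline{\mathbb B_R^7})$ again lie in $C^\infty_\text{rad}(\overline{\mathbb B_R^7})$, the claim follows.

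The main obstacle is precisely this cancellation of the pole of $\mu_3$: unlike in the two previous lemmas one cannot reduce to $\hat N_3$ on a product domain, so one must track the exact powers of $\r$ in the Taylor expansion, the second-order vanishing of $\Lambda u_1$, and the parity bookkeeping ($C_e^\infty$ versus $C_o^\infty$, and that an even analytic function of a radial-smooth function is radial smooth) that together let the regularised profiles $g_j$ absorb it.
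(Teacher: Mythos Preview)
Your approach is correct and is essentially the same as the paper's: both factor the pole of $\mu_3^{(j)}$ by writing $\r^{j+1}\mu_3^{(j)}\big(\r(U_1+\zeta_1)\big)=(U_1+\zeta_1)^{-(j+1)}\,g_j\big(\r(U_1+\zeta_1)\big)$ with $g_j$ even analytic, and then invoke the positivity of $U_1+u_1$ to conclude smoothness. The paper phrases this as ``if $f\in C_o^\infty$ then $f\mu_3(f)\in C_e^\infty$'' and treats higher $j$ by analogy, while you package it via the functions $g_j$; this is only a cosmetic difference.

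There is one small slip in your bookkeeping. The expansion has \emph{eight} terms, of which \emph{three} carry $\zeta_2$ (the $\zeta_2^2$ term, the $\zeta_1\zeta_2$ term, and the $\zeta_1^2\zeta_2$ term), not two; your sentence ``in the two terms carrying $\zeta_2$ \dots\ a surviving $|\xi|$ pairs with the $C_o^\infty$-factor $U_1'(|\xi|)$'' fits the latter two but not the $\zeta_2^2$ term, which has no $U_1'$ factor. Your method still covers it: $\r^{-1}\mu_3(\cdot)\zeta_2^2$ with $\zeta_2=|\xi|^2w_1$ leaves a surplus $|\xi|^2$, which is radial smooth on its own. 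The paper handles this particular term slightly differently, using the radial identity $(\xi^j\partial_j u_1)^2=|\xi|^2|\nabla u_1|^2$ instead of $\Lambda u_1=|\xi|^2w_1$; both routes arrive at the same factor of $|\xi|^2$. So once you correct the count and add a one-line remark for the $\zeta_2^2$ term, your argument is complete and matches the paper's.
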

					\begin{proof}
						For this, we use crucially that we only consider real-valued radial functions and that $U_1(\r)>0$ for $\r\in[0,2]$ and attains a positive minimum in $[0,2]$. First, note that for smooth, radial functions $u$, it always holds that
							$$
								\big(\xi^j\partial_j u(\xi)\big)^2=|\xi|^2|\nabla u(\xi)|^2.
							$$ 
						Thus, this nonlinear term can be equivalently expressed as
							\begin{align*}
								N_1\big(&u_1(\xi),\xi^j\partial_j u_1(\xi),u_2(\xi),\xi\big)
								\\
								=&|\xi|\mu_3\big(|\xi| U_1(|\xi|)+|\xi|u_1(\xi)\big)u_2(\xi)^2
								\\
								&-|\xi|\mu_3\big(|\xi| U_1(|\xi|)+|\xi|u_1(\xi)\big)|\nabla u_1(\xi)|^2
								\\
								&+2U_2(|\xi|)|\xi|^2\mu_3'\big(|\xi| U_1(|\xi|)\big)u_1(\xi)u_2(\xi)
								\\
								&-2|\xi|^{-1}U_1'(|\xi|)|\xi|^2\mu_3'\big(|\xi| U_1(|\xi|)\big)u_1(\xi)\xi^j\partial_j u_1(\xi)
								\\
								&+\Big(U_2(|\xi|)^2-U_1'(|\xi|)^2\Big)\frac{1}{2}|\xi|^3\mu_3''\big(|\xi| U_1(|\xi|)\big)u_1(\xi)^2
								\\
								&+2U_2(|\xi|)u_1(\xi)^2u_2(\xi)\int_0^1|\xi|^3\mu_3''\big(|\xi| U_1(|\xi|)+|\xi|u_1(\xi)\big)dt
								\\
								&-2|\xi|^{-1}U_1'(|\xi|)u_1(\xi)^2\xi^j\partial_j u_1(\xi)\int_0^1|\xi|^3\mu_3''\big(|\xi| U_1(|\xi|)+t|\xi|u_1(\xi)\big)dt
								\\
								&+\Big(U_2(|\xi|)^2-U_1'(|\xi|)^2\Big)\frac{1}{2}u_1(\xi)^3\int_0^1|\xi|^4\mu_3^{(3)}\big(|\xi| U_1(|\xi|)+t|\xi|u_1(\xi)\big)dt.
							\end{align*}
						We claim that $(\cdot)^\ell\mu_3^{(\ell-1)}\big((\cdot)U_1(\cdot)+(\cdot)\zeta_1\big)\in C_e^\infty[0,R]$ for all $\ell\in\mathbb N$. We demonstrate this for $\ell=1$ as higher values of $\ell$ follow analogously. For $\r\in[0,R]$, we write
							\begin{align*}
								\r\mu_3\big(\r U_1(\r)+\r\zeta_1\big)=\frac{1}{U_1(\r)+\zeta_1}\big(\r U_1(\r)+\r\zeta_1\big)\mu_3\big(\r U_1(\r)+\r\zeta_1\big)
							\end{align*}
						since $U_1(\r)+\zeta_1\neq0$. A direct calculation shows that if $f\in C_o^\infty[0,R]$, then $f\mu_3(f)\in C_e^\infty[0,R]$. By our choice of $\delta_0$, we ensure that $\big(U_1(|\cdot|)+u_1\big)^{-1}\in C_\text{rad}^\infty(\overline{\mathbb B_R^7})$. Direct calculations furthermore verify that
							$$
								U_2,(\cdot)^{-1}U_1',(U_1')^2,(\cdot)^2\mu_3'\big((\cdot)U_1\big),(\cdot)^3\mu_3''\big((\cdot)U_1\big)\in C_e^\infty[0,R]
							$$
						from which the claim follows.
					\end{proof}
			
				Having defined $N_3\big(u_1(\cdot),\Lambda u_1(\cdot),u_2(\cdot)\cdot\big)$ for $(u_1,u_2)\in C^\infty_\text{rad}(\overline{\mathbb B_R^7})\times C^\infty_\text{rad}(\overline{\mathbb B_R^7})$, we proceed to prove local Lipschitz bounds on $\mathbf N_3$ from small balls in $\mathcal H^k$ for any $k\geq5$ as follows.
					
					\begin{proposition} \label{N3 lipschitz bound}
						Let $k\in\mathbb N$ with $k\geq5$. There exists $\d_0>0$ such that for any $\d\in(0,\d_0]$, the map $\mathbf N_3:\mathcal B_\d^k\to\mathcal H^k$ is defined and satisfies the following local Lipschitz bound
							$$
								\|\mathbf N_3(\mathbf u)-\mathbf N_3(\mathbf v)\|_{\mathcal H^k}\lesssim_k\big(\|\mathbf u\|_{\mathcal H^k}+\|\mathbf v\|_{\mathcal H^k}\big)\|\mathbf u-\mathbf v\|_{\mathcal H^k}.
							$$
					\end{proposition}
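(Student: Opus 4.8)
The plan is to follow the proofs of Propositions~\ref{N1 lipschitz bound} and~\ref{N2 lipschitz bound}, the two new features being the singularity of $\mu_3=-\cot$ at the origin and the presence of products that are quadratic in the derivative factor $\Lambda u_1$ and in the velocity factor $u_2$. First I would fix $\d_0>0$ small enough that the conclusions of Lemmas~\ref{N1 well defined}, \ref{N2 well defined}, \ref{N23well defined} and of Propositions~\ref{N1 lipschitz bound}, \ref{N2 lipschitz bound} hold on $\mc B_\d^k$, and in addition so that $\|u_1\|_{L^\infty(\B^7)}\leq\tfrac{A}{2}$ for every $\mb u=(u_1,u_2)\in\mc B_\d^k$ (Sobolev embedding $H^5(\B^7)\hookrightarrow L^\infty(\B^7)$, $k\geq5$, with $A$ from~\eqref{max of profile}); this keeps $|\xi|U_1(|\xi|)+|\xi|u_1(\xi)$ in a compact subinterval of $(-\pi,\pi)$ and $U_1(|\cdot|)+u_1$ bounded away from $0$ on $\overline{\B^7}$, so that $\big(U_1(|\cdot|)+u_1\big)^{-1}\in C^\infty_\text{rad}(\overline{\B^7})$ and, via the factorization $\r\mu_3(\r U_1(\r)+\r\zeta_1)=\big(U_1(\r)+\zeta_1\big)^{-1}\big(\r U_1(\r)+\r\zeta_1\big)\mu_3\big(\r U_1(\r)+\r\zeta_1\big)$ together with the analyticity of $s\mapsto s\mu_3(s)$ near $s=0$, all the $\mu_3$-coefficients occurring in the expansion of $\hat N_3$ before Lemma~\ref{N23well defined} make sense and are smooth. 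Using the cutoffs $\chi_1,\chi_2$ from the proof of Proposition~\ref{N1 lipschitz bound}, I would then form the globally smooth, compactly supported extension $\mc N_3:\R^3\times\R^7\to\R$ of $N_3$ (cut off by $\chi_1(\zeta_1)\chi_2(\xi)$, zero outside $[-A,A]\times\R^2\times\B_2^7$, polynomial hence smooth in $\zeta_2,\zeta_3$), which, by the integral-remainder expansion of $\hat N_3$, vanishes together with its first-order $\zeta$-derivatives at $(\zeta_1,\zeta_2,\zeta_3)=(0,0,0)$, the affine part having been removed in the definition~\eqref{scalar nonlinear remainder} of $N$.

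Next I would rewrite $N_3$ in the $u_1$-radial form used in Lemma~\ref{N23well defined}, exploiting $\big(\xi^j\partial_j u_1(\xi)\big)^2=|\xi|^2|\nabla u_1(\xi)|^2$, and group the finitely many resulting terms by how many explicit factors from $\{u_1,\Lambda u_1,u_2\}$ they carry; every term has at least two such factors, with $u_1$ controlled in $H^k(\B^7)$ and $\Lambda u_1,u_2$ in $H^{k-1}(\B^7)$, and with a coefficient that is a smooth function of $(\xi,u_1)$ lying, by the computations of Lemma~\ref{N23well defined}, in $C_e^\infty[0,1]$ for fixed $u_1$. The terms depending on $\zeta_1$ alone are handled verbatim as in Proposition~\ref{N1 lipschitz bound} by applying Lemma~2.13 of~\cite{BDS19} to $\mc N_3$. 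For the remaining terms I would telescope in each factor separately, e.g.\
\[ c(\cdot,u_1)\,h_u^2-c(\cdot,v_1)\,h_v^2=c(\cdot,u_1)\big(h_u-h_v\big)\big(h_u+h_v\big)+\big(c(\cdot,u_1)-c(\cdot,v_1)\big)h_v^2, \]
with $h\in\{\Lambda u_1,u_2\}$, and close the estimate in $H^{k-1}(\B^7)$ using its Banach-algebra property for $k\geq5$, the bound $\|\Lambda u_1\|_{H^{k-1}}\lesssim\|u_1\|_{H^k}$, the uniform $H^{k-1}$-boundedness and $H^{k-1}$-Lipschitz dependence of the coefficient $c(\cdot,u_1)$ on $u_1$, and $\|h_v\|_{H^{k-1}}\lesssim\|\mb v\|_{\mc H^k}$; the terms linear in $\Lambda u_1$ or $u_2$ are the same modulo dropping one factor. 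Since each term carries two small factors, every contribution is bounded by $\big(\|\mb u\|_{\mc H^k}+\|\mb v\|_{\mc H^k}\big)\|\mb u-\mb v\|_{\mc H^k}$ (using $\|v_1\|_{H^k}^2\lesssim\d_0\|v_1\|_{H^k}$ where a squared factor appears), and summing finishes the proof.

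I expect the main obstacle to be establishing the $H^{k-1}$-Lipschitz dependence on $u_1$ of the coefficients of the form $|\xi|\mu_3\big(|\xi|U_1(|\xi|)+|\xi|u_1(\xi)\big)$ that multiply $u_2^2$ and $|\nabla u_1|^2$: unlike in Proposition~\ref{N1 lipschitz bound}, this is not a fixed multiplier but a composition through $\cot$, which is singular precisely where its argument could vanish. The fix, already latent in Lemma~\ref{N23well defined}, is the factorization through $\big(U_1(|\cdot|)+u_1\big)^{-1}$ and the smooth even function $s\mapsto s\mu_3(s)$, after which a standard Moser-type composition estimate in the Banach algebra $H^{k-1}(\B^7)$, valid uniformly on the ball $\mc B_\d^k$ thanks to the lower bound on $U_1(|\cdot|)+u_1$ secured by the choice of $\d_0$, gives the required Lipschitz bound. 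Once these scalar building blocks are in place, the remaining work is routine bookkeeping with the algebra structure of $H^{k-1}(\B^7)$, exactly as in the proofs of Propositions~\ref{N1 lipschitz bound} and~\ref{N2 lipschitz bound}.
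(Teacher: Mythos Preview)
Your proposal is correct and follows essentially the same approach as the paper: introduce the cutoff extension $\mc N_3$, verify the vanishing of its first-order jet at the origin in $(\zeta_1,\zeta_2,\zeta_3)$, and then argue by telescoping together with the algebra property of $H^{k-1}(\B^7)$ exactly as in Propositions~\ref{N1 lipschitz bound} and~\ref{N2 lipschitz bound}. The paper's proof is terse (``minor modifications using the arguments from the proof of Proposition~\ref{N2 lipschitz bound}''), and your write-up simply makes explicit the one genuinely new point---that the coefficient $|\xi|\mu_3\big(|\xi|U_1(|\xi|)+|\xi|u_1(\xi)\big)$ must be shown to depend Lipschitz-continuously on $u_1$ in $H^{k-1}$ despite the pole of $\cot$---and resolves it via the factorization through $\big(U_1(|\cdot|)+u_1\big)^{-1}$ already prepared in Lemma~\ref{N23well defined}.
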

					\begin{proof}
						Again, take $\d_0$ as in Proposition \ref{N1 lipschitz bound}. Furthermore, using the cutoff functions from the proof of Proposition \ref{N1 lipschitz bound}, we consider the auxiliary quantity $\mathcal N_3:\R\times\R\times\R\times\mathbb R^7\to\R$ defined by 
							\begin{align*}
								\mathcal N_3(\zeta_1&,\zeta_2,\zeta_3,\xi)
								\\	:=&\begin{cases}
										\chi_1(\zeta_1)\chi_2(\xi)N_3(\zeta_1,\zeta_2,\zeta_3,\xi),& (\zeta_1,\zeta_2,\zeta_3,\xi)\in[-A,A]\times\R\times\R\times\mathbb B_2^7
										\\
										0,&(\zeta_1,\zeta_2,\zeta_3,\xi)\in\R\times\R\times\mathbb R\times\mathbb R^7\setminus\big([-A,A]\times\R\times\R\times\mathbb B_2^7\big)
									\end{cases}.
							\end{align*}
						A direct calculation verifies that $\mathcal N_3\in C^\infty(\mathbb R\times\mathbb R\times\R\times\mathbb R^7)$ and that $\mathcal N_3(0,0,0,\xi)=\partial_1\mathcal N_3(0,0,0,\xi)=\partial_2\mathcal N_3(0,0,0,\xi)=\partial_3\mathcal N_3(0,0,0,\xi)=0$ for all $\xi\in\mathbb R^7$. The claim then follows with minor modifications using the arguments from the proof of Proposition \ref{N2 lipschitz bound}.
			\end{proof}
		
		Finally, we prove the main result on the nonlinearity.
		
			\begin{proof}[Proof of Proposition \ref{locally lipschitz nonlinearity estimate}]
				The claim follows by the triangle inequality and Propositions \ref{N1 lipschitz bound}, \ref{N2 lipschitz bound}, and \ref{N3 lipschitz bound}.
			\end{proof}			
	
	\subsection{The abstract Cauchy problem} \label{The abstract cauchy problem}
		We turn our attention to studying the abstract initial value problem
			\begin{equation}
				\begin{cases}
					\partial_\t\Phi(\t)=\mathbf L\Phi(\t)+\mathbf N\big(\Phi(\t)\big),&\t>0
					\\
					\Phi(0)=\mb u
				\end{cases} \label{abstract ivp}
			\end{equation}
		for $\mb u\in\mathcal B_\delta$ for any $\delta\leq\delta_0$ as in Proposition \ref{locally lipschitz nonlinearity estimate}. Using the semigroup, we reformulate this as an integral equation via Duhamel's formula
			$$
				\Phi(\tau)=\mathbf S(\tau)\mb u+\int_0^\tau\mathbf S(\tau-s)\mathbf N\big(\Phi(s)\big)ds
			$$
		on the Banach space
			$$
				\mathcal X:=\{\Phi\in C([0,\infty),\mathcal H):\|\Phi\|_\mathcal X:=\sup_{\t>0}e^{\omega\t}\|\Phi(\t)\|_\mathcal H<\infty\}
			$$
		for $\omega>0$ as in Theorem \ref{linear stability}. However, due to $1\in\sigma_p(\mathbf L)$, it is not possible to prove the existence of a solution in the space $\mathcal X$ for small data $\mb u$. To remedy this, we first consider a modified problem following the Lyapunov-Perron method from dynamical systems theory. Given $\Phi\in\mc X$ and $\mathbf u\in\mc B_\d$, we introduce a correction term
			$$
				\mathbf C(\Phi,\mathbf u):=\mathbf P\Big(\mathbf u+\int_0^\infty e^{-s}\mathbf N\big(\Phi(s)\big)ds\Big)
			$$
		and consider the modified equation
			\begin{equation}
				\Phi(\t)=\mathbf S(\tau)\big(\mathbf u-\mathbf C(\Phi,\mathbf u)\big)+\int_0^\tau\mathbf S(\tau-s)\mathbf N\big(\Phi(s)\big)ds. \label{modified Duhamel}
			\end{equation}
		We will first show the existence of a unique solution of Equation \eqref{modified Duhamel} within the space $\mathcal X$ and, afterward, show that this correction term can be suppressed by taking $\mathbf u$ as in Equation \eqref{Initia_Data_Trans} and allowing the blowup time to vary. 
		
		\begin{proposition} \label{mod wp}
			For all sufficiently large $c>0$ and sufficiently small $\delta>0$ and any $\mathbf u\in\mathcal H$ satisfying $\|\mathbf u\|_{\mathcal H}\leq\frac{\delta}{c}$, there exists a unique solution $\Phi_{\mathbf u}\in C([0,\infty),\mathcal H)$ of Equation \eqref{modified Duhamel} that satisfies $\|\Phi_{\mathbf u}(\t)\|_{\mathcal H}\leq\delta e^{-\omega\t}$ for all $\t\geq0$. Furthermore, the solution map $\mathbf u\mapsto\Phi_{\mathbf u}$ is Lipschitz as a map from $\mc B_{\delta/c}$ to $\mathcal X$.
		\end{proposition}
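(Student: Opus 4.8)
The plan is to prove Proposition \ref{mod wp} by a contraction mapping argument of Lyapunov--Perron type. Fix $\omega>0$ as in Theorem \ref{linear stability} and, for $\delta>0$, work in the closed ball
\[ \mathcal X_\delta := \{ \Phi \in \mathcal X : \|\Phi\|_{\mathcal X} \leq \delta \}, \]
which is a complete metric space. On $\mathcal X_\delta$ I would define
\[ \mathbf K(\Phi, \mathbf u)(\tau) := \mathbf S(\tau)\big(\mathbf u - \mathbf C(\Phi, \mathbf u)\big) + \int_0^\tau \mathbf S(\tau - s)\, \mathbf N\big(\Phi(s)\big)\, ds, \]
so that solutions of \eqref{modified Duhamel} lying in $\mathcal X_\delta$ are exactly the fixed points of $\mathbf K(\cdot,\mathbf u)$. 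For $\Phi \in \mathcal X_\delta$ with $\delta \leq \delta_0$ (from Proposition \ref{locally lipschitz nonlinearity estimate}) one has $\|\Phi(s)\|_{\mathcal H} \leq \delta e^{-\omega s} \leq \delta_0$, so $\mathbf N(\Phi(s))$ is defined, and since $\mathbf N(\mathbf 0)=\mathbf 0$ the Lipschitz estimate of Proposition \ref{locally lipschitz nonlinearity estimate} upgrades to the quadratic bound $\|\mathbf N(\Phi(s))\|_{\mathcal H} \lesssim \|\Phi(s)\|_{\mathcal H}^2 \leq \delta^2 e^{-2\omega s}$; in particular all integrals below converge absolutely.

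The key structural point, which is the whole purpose of the correction term, is that $\mathbf C(\Phi,\mathbf u)\in\range\mathbf P$ and $\mathbf S(\tau)$ acts on $\range\mathbf P$ as multiplication by $e^\tau$ (Theorem \ref{linear stability}). Splitting $\mathbf N = \mathbf P\mathbf N + (1-\mathbf P)\mathbf N$ and $\mathbf u = \mathbf P\mathbf u + (1-\mathbf P)\mathbf u$ and recombining, one rewrites
\[ \mathbf K(\Phi, \mathbf u)(\tau) = \mathbf S(\tau)(1-\mathbf P)\mathbf u + \int_0^\tau \mathbf S(\tau-s)(1-\mathbf P)\mathbf N\big(\Phi(s)\big)\, ds - \int_\tau^\infty e^{\tau-s}\, \mathbf P\mathbf N\big(\Phi(s)\big)\, ds, \]
i.e.\ the unstable part of the equation has been converted into an anti-causal integral over $[\tau,\infty)$, on which the factor $e^{\tau-s}$ with $s\geq\tau$ is harmless.

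I would then estimate the three terms using $\|\mathbf S(\tau)(1-\mathbf P)\mathbf v\|_{\mathcal H} \leq C'e^{-\omega\tau}\|\mathbf v\|_{\mathcal H}$ and the quadratic bound on $\mathbf N$: the first term is $\lesssim \tfrac{\delta}{c}e^{-\omega\tau}$; the second is $\lesssim \delta^2 e^{-\omega\tau}$ after $\int_0^\tau e^{-\omega(\tau-s)}e^{-2\omega s}\,ds \lesssim e^{-\omega\tau}$; and the third is $\lesssim \delta^2 \int_\tau^\infty e^{\tau-s}e^{-2\omega s}\,ds \lesssim \delta^2 e^{-\omega\tau}$. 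Hence $\|\mathbf K(\Phi,\mathbf u)\|_{\mathcal X} \leq \big(\tfrac{C'}{c}+C_1\delta\big)\delta$, which is $\leq\delta$ once $c$ is large and $\delta$ small, so $\mathbf K(\cdot,\mathbf u)$ maps $\mathcal X_\delta$ into itself. For the contraction property, for $\Phi,\Psi\in\mathcal X_\delta$ the $\mathbf u$-term cancels and the two integrals are handled as above using $\|\mathbf N(\Phi(s))-\mathbf N(\Psi(s))\|_{\mathcal H} \lesssim (\|\Phi(s)\|_{\mathcal H}+\|\Psi(s)\|_{\mathcal H})\|\Phi(s)-\Psi(s)\|_{\mathcal H} \lesssim \delta e^{-2\omega s}\|\Phi-\Psi\|_{\mathcal X}$, giving $\|\mathbf K(\Phi,\mathbf u)-\mathbf K(\Psi,\mathbf u)\|_{\mathcal X} \lesssim \delta\|\Phi-\Psi\|_{\mathcal X}$; shrinking $\delta$ makes the constant $\leq\tfrac12$.

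The Banach fixed point theorem then yields a unique $\Phi_{\mathbf u}\in\mathcal X_\delta\subset C([0,\infty),\mathcal H)$ solving \eqref{modified Duhamel} with $\|\Phi_{\mathbf u}(\tau)\|_{\mathcal H}\leq\delta e^{-\omega\tau}$, and uniqueness within the class of continuous functions satisfying this bound is automatic, since every such function lies in $\mathcal X_\delta$. For Lipschitz dependence on $\mathbf u$, note $\mathbf C(\Phi,\mathbf u)-\mathbf C(\Phi,\mathbf v) = \mathbf P(\mathbf u-\mathbf v)$, hence $\mathbf K(\Phi,\mathbf u)(\tau)-\mathbf K(\Phi,\mathbf v)(\tau) = \mathbf S(\tau)(1-\mathbf P)(\mathbf u-\mathbf v)$ and $\|\mathbf K(\Phi,\mathbf u)-\mathbf K(\Phi,\mathbf v)\|_{\mathcal X}\leq C'\|\mathbf u-\mathbf v\|_{\mathcal H}$; combining with the contraction estimate in the usual way gives $\|\Phi_{\mathbf u}-\Phi_{\mathbf v}\|_{\mathcal X}\leq 2C'\|\mathbf u-\mathbf v\|_{\mathcal H}$. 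The main conceptual obstacle is precisely the unstable eigenvalue $1\in\sigma_p(\mathbf L)$, which obstructs a naive Duhamel iteration in the decaying space $\mathcal X$; once the correction term $\mathbf C(\Phi,\mathbf u)$ converts the $\range\mathbf P$-projection of the equation into the convergent anti-causal integral above, everything reduces to routine estimates driven by Theorem \ref{linear stability} and Proposition \ref{locally lipschitz nonlinearity estimate}.
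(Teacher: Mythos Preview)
Your proof is correct and follows essentially the same Lyapunov--Perron contraction argument as the paper: the same decomposition $\mathbf K = \mathbf P\mathbf K + (1-\mathbf P)\mathbf K$, the same identification of $\mathbf P\mathbf K(\Phi,\mathbf u)(\tau)$ with the anti-causal integral $-\int_\tau^\infty e^{\tau-s}\mathbf P\mathbf N(\Phi(s))\,ds$, and the same estimates driven by Theorem \ref{linear stability} and Proposition \ref{locally lipschitz nonlinearity estimate}. The only (inessential) difference is presentational: you combine the $\mathbf P$- and $(1-\mathbf P)$-parts into a single rewritten formula for $\mathbf K$, whereas the paper treats them separately; also, the paper closes with a sentence upgrading uniqueness from $\mathcal X_\delta$ to all of $\mathcal X$ by a standard argument, which you omit but is not required for the proposition as stated.
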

		\begin{proof}
			Introduce the closed ball
				$$
					\mathcal X_\delta:=\{\Phi\in C([0,\infty),\mathcal H):\|\Phi\|_\mathcal X\leq\delta\}
				$$
			and formally define the map
				$$
					\mathbf K_\mathbf u(\Phi)(\t):=\mathbf S(\tau)\big(\mathbf u-\mathbf C(\Phi,\mathbf u)\big)+\int_0^\tau\mathbf S(\tau-s)\mathbf N\big(\Phi(s)\big)ds.
				$$
			By taking $\d_0$ small enough, we can ensure that for any $\Phi=(\varphi_1,\varphi_2)\in\mathcal X_\d$ we have
				$$
					\sup_{\t\geq0}\|\varphi_1(\t)\|_{L^\infty(\mathbb B^7)}\leq\frac{A}{2}
				$$ 
			by the Sobolev embedding $H^5(\mathbb B^7)\hookrightarrow L^\infty(\mathbb B^7)$ where $A$ is the number defined in \eqref{max of profile}. We aim to show that $\mathbf K_{\mathbf u}:\mathcal X_\delta\to\mathcal X_\delta$ is a well-defined contraction map. 
				
			First, observe that by Theorem \ref{linear stability} and Proposition \ref{projection}, we have
				$$
					\mathbf P\mathbf K_{\mathbf u}(\Phi)(\t)=-\int_\t^\infty e^{\t-s}\mathbf P\mathbf N\big(\Phi(s)\big)ds.
				$$
			From Proposition \ref{locally lipschitz nonlinearity estimate} and the fact that $\mathbf N(\mathbf0)=\mathbf0$, we have the estimate
				\begin{align*}
					\|\mathbf P\mathbf K_{\mathbf u}(\Phi)(\t)\|_{\mathcal H}&\lesssim e^{\t}\int_\t^\infty e^{-s}\|\Phi(s)\|_{\mathcal H}^2ds
					\\
					&\lesssim e^{\t}\|\Phi\|_{\mathcal X}^2\int_\t^\infty e^{-s-2\omega s}ds' \lesssim \delta^2e^{-2\omega \t}.
				\end{align*}
			By Proposition \ref{projection}, we have $(1-\mathbf P)\mathbf C(\Phi,\mathbf u)=\mathbf 0$. This implies
				$$
					(1-\mathbf P)\mathbf K_{\mathbf u}(\Phi)(\t)=\mathbf S(\t)(1-\mathbf P)\mathbf u+\int_{0}^\t\mathbf S(\t-s)(1-\mathbf P)\mathbf N\big(\Phi(s)\big)ds.
				$$
			By Theorem \ref{linear stability}, we obtain
				\begin{align*}
					\|(1-\mathbf P)\mathbf K_{\mathbf u}(\Phi)(\t)\|_{\mathcal H}&\lesssim e^{-\omega \t}\|(1-\mathbf P)\mathbf u\|_{\mathcal H}+\int_{0}^\t e^{-\omega (\t-s)}\|\mathbf N\big(\Phi(s)\big)\|_{\mathcal H}ds
					\\
					&\lesssim\frac{\delta}{c}e^{-\omega \t}+e^{-\omega \t}\int_{0}^se^{\omega s}\|\Phi(s)\|_{\mathcal H}^2ds
					\\
					&\lesssim\frac{\delta}{c}e^{-\omega \t}+\|\Phi\|_{\mathcal X}^2e^{-\omega \t}\int_{0}^\t e^{-\omega s}ds
					\\
					&\lesssim\frac{\delta}{c}e^{-\omega \t}+\delta^2e^{-\omega \t}
				\end{align*}
			for all $\t\geq0$. Thus, for $\delta_0$ sufficiently small and $c$ sufficiently large, we can ensure
				$$
					\|\mathbf K_{\mathbf u}(\Phi)(\t)\|_{\mathcal H}\leq\delta e^{-\omega \t}.
				$$
			Consequently, we see that $\mathbf K_{\mathbf u}:\mathcal X_\delta\to\mathcal X_\delta$. 
				
			We claim that $\mathbf K_{\mathbf u}$ is a contraction map. Given $\Phi,\Psi\in\mathcal X_\delta$, observe that
				$$
					\mathbf P\mathbf K_{\mathbf u}(\Phi)(\t)-\mathbf P\mathbf K_{\mathbf u}(\Psi)(\t)=-\int_\t^\infty e^{\t-s}\mathbf P\Big(\mathbf N\big(\Phi(s)\big)-\mathbf N\big(\Psi(s)\big)\Big)ds.
				$$
			By Proposition \ref{locally lipschitz nonlinearity estimate}, we have that 
				\begin{align*}
					\|\mathbf P\mathbf K_{\mathbf u}(\Phi)(\t)&-\mathbf P\mathbf K_{\mathbf u}(\Psi)(\t)\|_{\mathcal H}
					\\
					&\lesssim e^{\t}\int_\t^\infty e^{-s}\big(\|\Phi(s)\|_{\mathcal H}+\|\Psi(s)\|_{\mathcal H}\big)\|\Phi(s)-\Psi(s)\|_{\mathcal H}ds
					\\
					&\lesssim\delta\|\Phi-\Psi\|_{\mathcal X}e^{\t}\int_\t^\infty e^{-s-2\omega s}ds
					\lesssim\delta e^{-2\omega \t}\|\Phi-\Psi\|_{\mathcal X}.
				\end{align*}
			Furthermore,
				$$
					(1-\mathbf P)\mathbf K_{\mathbf u}(\Phi)(\t)-(1-\mathbf P)\mathbf K_{\mathbf u}(\Psi)(\t)=\int_0^\t\mathbf S(\t-s)(1-\mathbf P)\Big(\mathbf N\big(\Phi(s)\big)-\mathbf N\big(\Psi(s)\big)\Big)ds.
				$$
			By Theorem \ref{linear stability} and Proposition \ref{locally lipschitz nonlinearity estimate}, we obtain
				\begin{align*}
					\|(1-\mathbf P)\mathbf K_{\mathbf u}(\Phi)(\t)&-(1-\mathbf P)\mathbf K_{\mathbf u}(\Psi)(\t)\|_{\mathcal H}
					\\
					&\lesssim\int_0^\t e^{-\omega (\t-s)}\big(\|\Phi(s)\|_{\mathcal H}+\|\Psi(s)\|_{\mathcal H}\big)\|\Phi(s)-\Psi(s)\|_{\mathcal H}ds
					\\
					&\lesssim\delta\|\Phi-\Psi\|_{\mathcal X}e^{-\omega \t}\int_0^\t e^{-\omega s}ds
					\lesssim\delta e^{-\omega \t}\|\Phi-\Psi\|_{\mathcal X}.
				\end{align*}
			Thus,
				$$
					\|\mathbf K_{\mathbf u}(\Phi)-\mathbf K_{\mathbf u}(\Psi)\|_{\mathcal X}\lesssim\delta\|\Phi-\Psi\|_{\mathcal X}
				$$
			and by considering smaller $\delta_0$ if necessary, we see that $\mathbf K_{\mathbf u}$ is a contraction on $\mathcal X_\delta$. The Banach fixed point theorem implies the existence of a unique fixed point $\Phi_\mathbf u\in\mathcal X_\delta$ of $\mathbf K_\mathbf u$. 
			
			We now claim that the solution map $\mathbf u\mapsto\Phi_\mathbf u$ is Lipschitz. Observe that
				\begin{align*}
					\|\Phi_\mathbf u-\Phi_\mathbf v\|_{\mathcal X}&=\|\mathbf K_\mathbf u(\Phi_\mathbf u)-\mathbf K_\mathbf v(\Phi_\mathbf v)\|_{\mathcal X}
					\\
					&\leq\|\mathbf K_\mathbf u(\Phi_\mathbf u)-\mathbf K_\mathbf u(\Phi_\mathbf v)\|_{\mathcal X}+\|\mathbf K_\mathbf u(\Phi_\mathbf v)-\mathbf K_\mathbf v(\Phi_\mathbf v)\|_{\mathcal X}
					\\
					&\lesssim\delta\|\Phi_\mathbf u-\Phi_\mathbf v\|_{\mathcal X}+\|\mathbf K_\mathbf u(\Phi_\mathbf v)-\mathbf K_\mathbf v(\Phi_\mathbf v)\|_{\mathcal X}.
				\end{align*}
			A direct calculation shows
				$$
					\mathbf K_\mathbf u(\Phi_\mathbf v)(\t)-\mathbf K_\mathbf v(\Phi_\mathbf v)(\t)=\mathbf S(\t)(1-\mathbf P)(\mathbf u-\mathbf v).
				$$
			Theorem \ref{linear stability} yields
				$$
					\|\mathbf K_\mathbf u(\Phi_\mathbf v)(\t)-\mathbf K_\mathbf v(\Phi_\mathbf v)(\t)\|_{\mathcal H}\lesssim e^{-\omega \t}\|\mathbf u-\mathbf v\|_{\mathcal H}.
				$$
			Thus, we have
				$$
					\|\Phi_\mathbf u-\Phi_\mathbf v\|_{\mathcal X}\lesssim\delta\|\Phi_\mathbf u-\Phi_\mathbf v\|_{\mathcal X}+\|\mathbf u-\mathbf v\|_{\mathcal H}.
				$$
			Again, considering smaller $\delta_0$ if necessary yields the result. Finally, that $\Phi_\mathbf u$ is the unique solution in $\mc X$ follows by standard arguments on unconditional uniqueness.
		\end{proof}
			
	\subsection{Variation of the blowup time}\label{Reconnecting to the Physical Problem}
		In this section, we show that the correction term in Equation \eqref{modified Duhamel} can be made to vanish by appropriately varying the blowup time $T$. As a first step, we define the \textit{initial data operator}. For functions $\mb v=(v_1,v_2)\in C^1_\text{rad}(\overline{\mathbb B^7_R})\times C_\text{rad}(\overline{\mathbb B^7_R})$, $R>0$, we define the rescaling operator
			$$
				\mathcal R(\mb v,T)(\xi):=\begin{pmatrix}
	   				T v_1(T\xi) 
					\\
	  				T^2 v_2(T\xi) 
				\end{pmatrix}
			$$
		for $\xi\in\overline{\mathbb B^7}$. We write
	$$
		\mb U(\xi):=
			\begin{pmatrix}
				U_1(|\xi|)
				\\
				U_2(|\xi|)
			\end{pmatrix}
	$$
to denote the blowup solution in similarity coordinates. For $T$ in some interval containing $1$ to be specified, we define the initial data operator as
			$$
				\Phi_0(\mb v,T)(\xi)=\mathcal R(\mb v,T)(\xi)+\mathcal R(\mb U,T)(\xi)-\mathcal R(\mb U,1)(\xi).
			$$
		Observe that this is precisely the right-hand side of Equation \eqref{Initia_Data_Trans}. Furthermore, consider the Hilbert space
			$$
				\mc Y:=H^6_\text{rad}(\mathbb B^7_2)\times H^5_\text{rad}(\mathbb B^7_2)
			$$ 
		with the standard norm and denote by $\mc B_\mc Y$ the unit ball in $\mc Y$. We have the following mapping properties of the initial data operator.
		\begin{lemma} \label{id operator}
			The initial data operator $\Phi_0:\mc B_\mc Y\times[\frac{1}{2},\frac{3}{2}]\to\mathcal H$ is Lipschitz continuous, i.e., 
				$$
					\|\Phi_0(\mb v,T_1)-\Phi_0(\mb w,T_2)\|_\mc H\lesssim\|\mb v-\mb w\|_{\mc Y}+|T_1-T_2|
				$$
			for all $\mb v,\mb w\in\mc B_\mc Y$ and $T_1,T_2\in[\frac{1}{2},\frac{3}{2}]$. Furthermore, if $\d\in(0,\d_0]$ for $\d_0>0$ sufficiently small and $\|\mb v\|_{\mc Y}\leq\d$, then for all $T\in[1-\d,1+\d]$,
				$$
					\|\Phi_0(\mb v,T)\|_\mathcal H\lesssim\d.
				$$
		\end{lemma}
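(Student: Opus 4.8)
The plan is to reduce the whole lemma to one uniform estimate for the rescaling map together with a fundamental-theorem-of-calculus argument in the parameter $T$. The key elementary fact I would establish first is that for every $k\in\N_0$ the substitution $u\mapsto u(T\,\cdot)$ is bounded from $H^k(\B^7_2)$ into $H^k(\B^7)$ with operator norm $\lesssim_k 1$, uniformly for $T\in[\tfrac12,\tfrac32]$; note $T\xi\in\B^7_2$ whenever $\xi\in\B^7$ since $T<2$. This is immediate from the change of variables $\|\partial^\alpha[u(T\,\cdot)]\|_{L^2(\B^7)}^2=T^{2|\alpha|-7}\|\partial^\alpha u\|_{L^2(\B^7_T)}^2$ and the fact that $T$ is bounded above and away from zero. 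In particular $\mb v\mapsto\mathcal R(\mb v,T)=(Tv_1(T\,\cdot),T^2v_2(T\,\cdot))$ is bounded from $H^5_{\mathrm{rad}}(\B^7_2)\times H^4_{\mathrm{rad}}(\B^7_2)\supseteq\mc Y$ into $\mc H$, with constant independent of $T\in[\tfrac12,\tfrac32]$.

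For the Lipschitz estimate I would split
\[
\Phi_0(\mb v,T_1)-\Phi_0(\mb w,T_2)=\mathcal R(\mb v-\mb w,T_1)+\big(\mathcal R(\mb w,T_1)-\mathcal R(\mb w,T_2)\big)+\big(\mathcal R(\mb U,T_1)-\mathcal R(\mb U,T_2)\big).
\]
The first term is $\lesssim\|\mb v-\mb w\|_{\mc Y}$ by the uniform boundedness above. For the remaining two, I would fix an $H^6\times H^5$ profile $\mb q$ (later $\mb q=\mb w$, then $\mb q=\mb U$), write $\mathcal R(\mb q,T_1)-\mathcal R(\mb q,T_2)=\int_{T_2}^{T_1}\tfrac{d}{ds}\mathcal R(\mb q,s)\,ds$, and compute
\[
\tfrac{d}{ds}\mathcal R(\mb q,s)(\xi)=\big(q_1(s\xi)+s\,\xi^j(\partial_jq_1)(s\xi),\;2s\,q_2(s\xi)+s^2\,\xi^j(\partial_jq_2)(s\xi)\big).
\]
The derivative terms $\xi^j(\partial_jq_1)(s\xi)$ and $\xi^j(\partial_jq_2)(s\xi)$ cost exactly one derivative of $q_1$ and $q_2$ respectively; applying the uniform rescaling bound to $q_1,\partial_jq_1\in H^5(\B^7_2)$ and $q_2,\partial_jq_2\in H^4(\B^7_2)$ gives $\|\tfrac{d}{ds}\mathcal R(\mb q,s)\|_{\mc H}\lesssim\|\mb q\|_{\mc Y}$ uniformly for $s\in[\tfrac12,\tfrac32]$, hence $\|\mathcal R(\mb q,T_1)-\mathcal R(\mb q,T_2)\|_{\mc H}\lesssim\|\mb q\|_{\mc Y}\,|T_1-T_2|$. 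For $\mb q\in\mc Y$ not smooth one first runs this on the dense subspace of smooth functions and extends by the bound; for $\mb q=\mb U$, which is smooth on $\overline{\B^7_{3/2}}\subset\B^7_{\rho^*}$ because $\rho^*=\sqrt5>\tfrac32$, the quantity $\|\mb U\|_{H^6(\B^7_{3/2})\times H^5(\B^7_{3/2})}$ is a fixed finite constant. Taking $\mb w\in\mc B_{\mc Y}$ then yields $\lesssim|T_1-T_2|$ for both remaining terms, which completes the Lipschitz bound. For the smallness claim I would write $\Phi_0(\mb v,T)=\mathcal R(\mb v,T)+\big(\mathcal R(\mb U,T)-\mathcal R(\mb U,1)\big)$; the first summand is $\lesssim\|\mb v\|_{\mc Y}\leq\d$ by uniform boundedness, and the second is $\lesssim|T-1|\leq\d$ by the previous computation with $\mb q=\mb U$, so $\|\Phi_0(\mb v,T)\|_{\mc H}\lesssim\d$.

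The only genuinely delicate point I anticipate is the derivative bookkeeping: one must check that differentiating the rescaling in $T$ produces terms with at most one extra derivative in each slot, so that the target regularity $\mc H=H^5\times H^4$ is matched precisely by the source regularity $\mc Y=H^6\times H^5$ — this is exactly why $\mc Y$ is taken one derivative higher than $\mc H$. Everything else is a routine change of variables plus a density argument; the only auxiliary requirement, that $\mb U$ and its first derivatives are square-integrable out to radius $\tfrac32$, is guaranteed by $\rho^*>\tfrac32$.
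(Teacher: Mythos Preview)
Your proof is correct and follows essentially the same approach as the paper: both arguments use the fundamental theorem of calculus in the parameter $T$ to write the difference $\mathcal R(\mb q,T_1)-\mathcal R(\mb q,T_2)$ as an integral of $\xi^j\partial_j q$-type terms, costing exactly one derivative and thus explaining the gap between $\mc Y=H^6\times H^5$ and $\mc H=H^5\times H^4$, together with a uniform rescaling bound. The only cosmetic differences are the order in which you split the terms and that you integrate in $s$ outside the argument while the paper writes the line integral inside the argument; also note that since $\rho^*=\sqrt5>2$ you could just as well take $\mb U\in H^6(\B^7_2)\times H^5(\B^7_2)$, matching your stated rescaling bound exactly.
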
	
		\begin{proof}
			Observe that the embedding $\mc Y\hookrightarrow C^2(\overline{\mathbb B^7_{2}})\times C^1(\overline{\mathbb B^7_{2}})$ implies that the pointwise definition of the initial data operator makes sense. 
			
			For any $v\in C^1(\overline{\mathbb B_2^7})$, $T_1,T_2\in[\frac{1}{2},\frac{3}{2}]$, and $\xi\in\overline{\mathbb B^7}$, we write
				$$
					v(T_1\xi)-v(T_2\xi)=(T_1-T_2)\int_0^1\xi^j\partial_j v\Big(\big(T_2+s(T_1-T_2)\big)\xi\Big)ds.
				$$
			Consequently, we obtain
				$$
					\|v(T_1\cdot)-v(T_2\cdot)\|_{H^k(\mathbb B^7)}\lesssim\|v\|_{H^{k+1}(\mathbb B_2^7)}|T_1-T_2|
				$$
			for $k\geq4$. For $\mb v,\mb w\in\mc Y$ and $T_1,T_2\in[\frac{1}{2},\frac{3}{2}]$, we then obtain
				\begin{equation}
					\|\mc R(\mb v,T_1)-\mc R(\mb w,T_2)\|_\mc H\lesssim\|\mb v\|_{\mc Y}|T_1-T_2|+\|\mb v-\mb w\|_\mc Y. \label{v LC}
				\end{equation}
			By smoothness of $\mb U$, we similarly have
				\begin{equation}
					\|\mc R(\mb U,T_1)-\mc R(\mb U,T_2)\|_\mc H\lesssim|T_1-T_2|. \label{mbU T LC}
				\end{equation}
			Thus, Lipschitz continuity of the map $\Phi_0:\mc B\times[\frac{1}{2},\frac{3}{2}]\to\mathcal H$ follows. In particular, if we take any $T\in[1-\d,1+\d]$ and set $T_1=T$, $T_2=1$, then \eqref{mbU T LC} shows that
				$$
					\|\mc R(\mb U,T)-\mc R(\mb U,1)\|_\mc H\lesssim\d.
				$$ 
			Furthermore, taking $\|\mb v\|_\mc Y\leq\d$ and $\mb w=0$, \eqref{v LC} shows
				$$
					\|\mc R(\mb v,T)\|_\mc H\lesssim\d.
				$$
			Thus, the second claim follows.
		\end{proof}
		\begin{lemma} \label{correction is zero}
			Let $\d_0>0$ be sufficiently small. For all $\d\in(0,\d_0]$, $c>0$ sufficiently large, and $\mb v\in\mc Y$ with
				$$
					\|\mb v\|_\mc Y\leq\frac{\d}{c^2},
				$$
			there exists a unique $T\in[1-\frac{\d}{c},1+\frac{\d}{c}]$ and a unique $\Phi\in\mathcal X_\d$ which satisfies
				\begin{equation}
					\Phi(\t)=\mathbf S(\t)\Phi_0(\mb v,T)+\int_0^\t\mathbf S(\t-s)\mathbf N\big(\Phi(s)\big)ds \label{duhamel with id operator}
				\end{equation}
			for all $\t>0$. Moreover, $T$ depends Lipschitz continuously on the data, i.e.,
				$$
					|T(\mb v)-T(\mb w)|\lesssim\|\mb v-\mb w\|_\mc Y
				$$
			for all $\mb v,\mb w\in\mc Y$ as above.
		\end{lemma}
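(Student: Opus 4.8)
\emph{Overall strategy.} The plan is a Lyapunov--Perron argument: since $\mathbf C(\Phi,\mathbf u)$ takes values in the one--dimensional space $\range\mathbf P=\langle\mathbf g_1^*\rangle$ (Theorem \ref{linear stability}), the requirement $\mathbf C=\mathbf 0$ is a single scalar condition, which I would solve for the blowup time $T$ near $1$ by a contraction mapping. The key structural input is a transversality property of the blowup family. In radial representatives, a direct computation using $\rho U_1=\rho\tilde U=U$ gives
$$
\partial_T\mathcal R(\mathbf U,T)\big|_{T=1}=\big(U_1(\rho)+\rho U_1'(\rho),\ 2U_2(\rho)+\rho U_2'(\rho)\big)=\big(U'(\rho),\ 2U_2(\rho)+\rho U_2'(\rho)\big),
$$
and comparing with $\mathbf g_1^*=\mathbf\Gamma^{-1}\mathbf f_1^*$ from Proposition \ref{point spectrum of evolution} one checks that $\partial_T\mathcal R(\mathbf U,T)|_{T=1}=c_*\,\mathbf g_1^*$ for some $c_*\neq 0$. (Conceptually this is forced by $u^T(t,r)=u^1(t-(T-1),r)$: the family of blowup solutions is generated by time translation, which in similarity coordinates produces precisely the $\lambda=1$ mode.) Writing $\mathbf P\mathbf f=\mathcal L(\mathbf f)\,\mathbf g_1^*$ with $\mathcal L$ the bounded functional normalized by $\mathcal L(\mathbf g_1^*)=1$, this says that $h(T):=\mathcal L(\mathcal R(\mathbf U,T))$ is smooth with $h'(1)=c_*\neq 0$.

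\emph{Reduction to a scalar fixed point.} Next I would set $T=1+\sigma$. Since $\|\Phi_0(\mathbf v,T)\|_{\mathcal H}\lesssim\|\mathbf v\|_{\mathcal Y}+|T-1|\lesssim\delta/c$ by Lemma \ref{id operator}, Proposition \ref{mod wp} applies (after possibly enlarging $c$) and produces the solution $\Phi_{\Phi_0(\mathbf v,T)}\in\mathcal X_\delta$; moreover $\Phi_{\mathbf 0}=\mathbf 0$, so the Lipschitz bound there gives $\|\Phi_{\mathbf u}(s)\|_{\mathcal H}\lesssim\|\mathbf u\|_{\mathcal H}\,e^{-\omega s}\lesssim(\delta/c)e^{-\omega s}$. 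Using $\range\mathbf P=\langle\mathbf g_1^*\rangle$ and $(1-\mathbf P)\mathbf C=\mathbf 0$, the condition $\mathbf C\big(\Phi_{\Phi_0(\mathbf v,T)},\Phi_0(\mathbf v,T)\big)=\mathbf 0$ becomes
$$
c_*\sigma+\big(h(1+\sigma)-h(1)-c_*\sigma\big)+\mathcal L\big(\mathcal R(\mathbf v,1+\sigma)\big)+\mathcal L\!\left(\int_0^\infty e^{-s}\mathbf N\big(\Phi_{\Phi_0(\mathbf v,1+\sigma)}(s)\big)ds\right)=0,
$$
which after dividing by $c_*$ has the form $\sigma=\mathcal T_{\mathbf v}(\sigma)$. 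On $|\sigma|\le\delta/c$ the three correction terms are bounded by $|h(1+\sigma)-h(1)-c_*\sigma|\lesssim\sigma^2\lesssim(\delta/c)^2$ (Taylor, smoothness of $\mathbf U$); $|\mathcal L(\mathcal R(\mathbf v,1+\sigma))|\lesssim\|\mathbf v\|_{\mathcal Y}\lesssim\delta/c^2$ (Lemma \ref{id operator}); and, via the quadratic bound $\|\mathbf N(\mathbf w)\|_{\mathcal H}\lesssim\|\mathbf w\|_{\mathcal H}^2$ from Proposition \ref{locally lipschitz nonlinearity estimate} together with $\mathbf N(\mathbf 0)=\mathbf 0$ and the decay just noted, $\big|\mathcal L(\int_0^\infty e^{-s}\mathbf N(\cdots)ds)\big|\lesssim(\delta/c)^2\int_0^\infty e^{-(1+2\omega)s}ds\lesssim(\delta/c)^2$. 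Hence $|\mathcal T_{\mathbf v}(\sigma)|\lesssim\delta/c^2+(\delta/c)^2$, so for $c$ large and $\delta$ small $\mathcal T_{\mathbf v}$ maps $[-\delta/c,\delta/c]$ into itself.

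\emph{Contraction and Lipschitz dependence.} For the contraction I would compare $\sigma_1,\sigma_2\in[-\delta/c,\delta/c]$: the Taylor remainder contributes $\lesssim(|\sigma_1|+|\sigma_2|)|\sigma_1-\sigma_2|\lesssim(\delta/c)|\sigma_1-\sigma_2|$; the $\mathcal R(\mathbf v,\cdot)$ term contributes $\lesssim\|\mathbf v\|_{\mathcal Y}|\sigma_1-\sigma_2|$ by the Lipschitz bound in Lemma \ref{id operator}; and the nonlinear term is handled by chaining $T\mapsto\Phi_0(\mathbf v,T)$ Lipschitz (Lemma \ref{id operator}), $\mathbf u\mapsto\Phi_{\mathbf u}$ Lipschitz from $\mathcal B_{\delta/c}$ to $\mathcal X$ (Proposition \ref{mod wp}), and the local Lipschitz bound for $\mathbf N$, which give $\|\Phi_{\Phi_0(\mathbf v,1+\sigma_1)}-\Phi_{\Phi_0(\mathbf v,1+\sigma_2)}\|_{\mathcal X}\lesssim|\sigma_1-\sigma_2|$ and hence a contribution $\lesssim(\delta/c)|\sigma_1-\sigma_2|$ after integrating against $e^{-s}$. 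Thus for $c$ large and $\delta$ small $\mathcal T_{\mathbf v}$ is a contraction on $[-\delta/c,\delta/c]$, and the Banach fixed point theorem yields a unique $\sigma(\mathbf v)$, i.e.\ a unique $T=T(\mathbf v)\in[1-\tfrac{\delta}{c},1+\tfrac{\delta}{c}]$; for this $T$ the correction term vanishes, so $\Phi:=\Phi_{\Phi_0(\mathbf v,T)}\in\mathcal X_\delta$ satisfies \eqref{duhamel with id operator}, with uniqueness inherited from Proposition \ref{mod wp}. For Lipschitz dependence of $T$ on $\mathbf v$ I would first check, again via Lemma \ref{id operator} and Proposition \ref{mod wp}, that $\mathbf v\mapsto\mathcal T_{\mathbf v}(\sigma)$ is Lipschitz uniformly in $\sigma$; then $|T(\mathbf v)-T(\mathbf w)|=|\mathcal T_{\mathbf v}(T(\mathbf v))-\mathcal T_{\mathbf w}(T(\mathbf w))|\le\tfrac12|T(\mathbf v)-T(\mathbf w)|+C\|\mathbf v-\mathbf w\|_{\mathcal Y}$ closes the argument.

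\emph{Main obstacle.} I expect the only genuinely non--routine step to be the transversality identity $c_*\neq 0$: if it failed, the leading--order $T$--dependence of the correction term would degenerate and the scalar equation could not be solved for $T$. Establishing it is exactly where the explicit structure of the Skyrme blowup profile $\tilde U$ and of the eigenfunction $\mathbf f_1^*$ enters the argument; everything else reduces to the smallness and Lipschitz estimates already supplied by Propositions \ref{mod wp} and \ref{locally lipschitz nonlinearity estimate} and Lemma \ref{id operator}.
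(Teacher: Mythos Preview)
Your proof is correct and follows essentially the same route as the paper: reduce the vanishing of $\mathbf C$ to a single scalar equation via $\range\mathbf P=\langle\mathbf g_1^*\rangle$, use the transversality $\partial_T\mathcal R(\mathbf U,T)|_{T=1}\in\langle\mathbf g_1^*\rangle\setminus\{0\}$ to isolate the leading $(T-1)$ term, and close by a Banach fixed point on $[-\delta/c,\delta/c]$ using the smallness and Lipschitz estimates from Lemma \ref{id operator}, Proposition \ref{mod wp}, and Proposition \ref{locally lipschitz nonlinearity estimate}. The only notable difference is that the paper simply asserts the constant $\kappa\neq 0$ in the Taylor expansion $\mathcal R(\mathbf U,T)-\mathcal R(\mathbf U,1)=\kappa(T-1)\mathbf g_1^*+\mathbf R(T)$, whereas you explicitly compute $\partial_T\mathcal R(\mathbf U,T)|_{T=1}$ and give the conceptual reason (time translation generates the $\lambda=1$ mode) for why it must be a nonzero multiple of $\mathbf g_1^*$.
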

		\begin{proof}
			Lemma \ref{id operator} implies $\|\Phi_0(\mb v,T)\|_{\mathcal H}\lesssim\frac{\d}{c^2}$ for all $T\in[1-\frac{\d}{c},1+\frac{\d}{c}]$. By taking $c$ sufficiently large, we can ensure $\|\Phi_0(\mb v,T)\|_{\mathcal H}\leq\frac{\d}{c}$ for all such $T$. Thus, Proposition \ref{mod wp} implies that for each $T\in[1-\frac{\d}{c},1+\frac{\d}{c}]$, there exists $\Phi_T:=\Phi_{\Phi_0(\mb v,T)}\in\mc X_\d$ which is unique in $\mc X$ and solves 
				$$
					\Phi_T(\t)=\mathbf S(\tau)\big(\Phi_0(\mb v,T)-\mathbf C(\Phi_T,\Phi_0(\mb v,T)\big)+\int_0^\tau\mathbf S(\tau-s)\mathbf N\big(\Phi_T(s)\big)ds
				$$
			for all $\t\geq0$. We aim to show that there exists a unique $T=T(\mathbf v)\in[1-\frac{\d}{c},1+\frac{\d}{c}]$ such that $\mathbf C(\Phi_{T},\Phi_0(\mb v,T)\big)=\mathbf 0$. Since $\rg\mathbf P=\langle\mathbf g_1^*\rangle$, this is equivalent to
				\begin{equation}
					\big(\mathbf C(\Phi_T,\Phi_0(\mb v,T))|\mathbf g_1^*\big)_\mathcal H=0. \label{inner prod is zero}
				\end{equation}
			
			By Taylor expansion, we have
				$$
					\mc R(\mb U,T)-\mc R(\mb U,1)=\kappa(T-1)\mb g_1^*+\mb R(T)
				$$
			for some constant $\kappa\in\mathbb R\setminus\{0\}$ with $\mb R(T)$ denoting the second-order remainder term. For $T_1,T_2\in[1-\d,1+\d]$, a direct calculation shows
				$$
					\|\mb R(T_1)-\mb R(T_2)\|_\mc H\lesssim\d|T_1-T_2|.
				$$
			With this, we write the initial data operator as
				$$
					\Phi_0(\mb v,T)=\mc R(\mb v,T)+\gamma(T-1)\mb g_1^*+\mb R(T).
				$$
			Applying the Riesz projection yields
				$$
					\mb P\Phi_0(\mb v,T)=\mb P\mc R(\mb v,T)+\gamma(T-1)\mb g_1^*+\mb P\mb R(T).
				$$
			Now, we write $T=1+\beta$ and define the following quantity
				$$
					\mb\Sigma_\mb v(\beta):=\mb P\mc R(\mb v,T)+\mb P\mb R(T)+\mb P\mb I(\beta)
				$$
			where
				$$
					\mb I(\beta):=\int_0^\infty e^{-s}\mb N(\Phi_{1+\beta}(s))ds.
				$$
			Thus, Equation \eqref{inner prod is zero} is equivalent to
				$$
					\beta=\Sigma_{\mb v}(\beta)=\tilde\kappa(\mb\Sigma_\mb v(\beta)|\mb g_1^*)_\mc H
				$$
			for some $\tilde\kappa\in\mathbb R\setminus\{0\}$. We aim to show that $\Sigma_{\mb v}:[-\frac{\d}{c},\frac{\d}{c}]\to[-\frac{\d}{c},\frac{\d}{c}]$ is a contraction map.
			
			Direct calculation shows that
				$$
					\Sigma_\mb v(\beta)=O\Big(\frac{\d}{c^2}\Big)+O(\d^2).
				$$
			Thus, for $c>0$ sufficiently large and $\d_0>0$ sufficiently small depending on $c$, we obtain $|\Sigma_\mb v|\leq\frac{\d}{c}$. To see that it is a contraction, let $\beta_1,\beta_2\in[-\frac{\d}{c},\frac{\d}{c}]$ and denote by $\Phi\in\mc X_\d$ the solution corresponding to $T_1=1+\beta_1$ and by $\Psi\in\mc X_\d$ the solution corresponding to $T_2=1+\beta_2$. By Proposition \ref{mod wp} and Lemma \ref{id operator}, we have
				$$
					\|\Phi-\Psi\|_\mc X\lesssim\|\Phi_0(\mb v,T_1)-\Phi_0(\mb v,T_2)\|_\mc H\lesssim|\beta_1-\beta_2|.
				$$
			By Proposition \ref{locally lipschitz nonlinearity estimate}, we obtain 
				$$
					\|\mb P\mb I(\beta_1)-\mb P\mb I(\beta_2)\|_\mc H\lesssim\d|\beta_1-\beta_2|.
				$$
			Since $\mb P\in\mc B(\mc H)$, we obtain 
				$$
					|\Sigma_\mb v(\beta_1)-\Sigma_\mb v(\beta_2)|\lesssim\d|\beta_1-\beta_2|.
				$$
			Upon taking $\d_0>0$ smaller if necessary, we have that $\Sigma_\mb v$ is a contraction. Thus, the Banach fixed point theorem implies the existence of a unique $\beta=\beta(\mb v)\in[-\frac{\d}{c},\frac{\d}{c}]$ such that $\mathbf C(\Phi_{T},\Phi_0(\mb v,T)\big)=\mathbf 0$ with $T=1+\beta$.
			
			Now, we show that the $T$ just obtained depends Lipschitz continuously on the data. For $\mb v,\mb w\in\mc Y$ satisfying the smallness assumption, denote by $\beta_\mb v$ and $\beta_\mb w$ the unique parameters obtained as above. We write
				\begin{align*}
					|\beta_\mb v-\beta_\mb w|&=|\Sigma_\mb v(\beta_\mb v)-\Sigma_\mb w(\beta_\mb w)|
					\\
					&\leq|\Sigma_\mb v(\beta_\mb v)-\Sigma_\mb w(\beta_\mb v)|+|\Sigma_\mb w(\beta_\mb v)-\Sigma_\mb w(\beta_\mb w)|.
				\end{align*}
			For the first term, we obtain 
				$$
					|\Sigma_\mb v(\beta_\mb v)-\Sigma_\mb w(\beta_\mb v)|\lesssim\|\mb v-\mb w\|_\mc Y.
				$$
			For the second term, we obtain
				$$
					|\Sigma_\mb w(\beta_\mb v)-\Sigma_\mb w(\beta_\mb w)|\lesssim\d|\beta_\mb v-\beta_\mb w|.
				$$
			So, by taking $\d_0>0$ sufficiently small, we obtain the desired Lipschitz dependence.
		\end{proof}
	
	\subsection{Upgrade to classical solutions}\label{Classical Solutions}
		We now show that if $\mathbf v\in C^\infty_\text{rad}(\overline{\mathbb B^7_{2}})\times C^\infty_\text{rad}(\overline{\mathbb B^7_{2}})$, then the solution obtained in Lemma \ref{correction is zero} is smooth and a classical solution.
			\begin{proposition} \label{classical soln}
				Let $\d_0>0$ and $c>0$ be as in Lemma \ref{correction is zero}, $\d\in(0,\d_0]$, and $\mb v\in C^\infty_\text{rad}(\overline{\mathbb B^7_{2}})\times C^\infty_\text{rad}(\overline{\mathbb B^7_{2}})$ such that 
					 $$
						\|\mb v\|_\mc Y\leq\frac{\d}{c^2}.
					$$
				Then the unique solution $\Phi$ of Equation \eqref{duhamel with id operator} belongs to $C^\infty([0,\infty)\times\mathbb B^7)\times C^\infty([0,\infty)\times\mathbb B^7)$ and solves Equation \eqref{abstract ivp} classically.
			\end{proposition}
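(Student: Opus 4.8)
The plan is to promote the global mild solution $\Phi$ produced in Lemma~\ref{correction is zero} to a jointly smooth, classical solution in two stages: first propagate spatial Sobolev regularity to every order, then bootstrap in the similarity time $\tau$. Since $\mb v\in C^\infty_\text{rad}(\overline{\mathbb B^7_{2}})\times C^\infty_\text{rad}(\overline{\mathbb B^7_{2}})$, the transformed data $\Phi_0(\mb v,T)$ is smooth and radial, hence belongs to $\mc H^k$ for every $k\geq 5$; it is, however, only \emph{small} in $\mc H^5$ (and, through $\|\mb v\|_{\mc Y}\leq\delta/c^2$, in $\mc H^6$) and \emph{a priori of arbitrary size} in the higher spaces. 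The first point to record is that the whole linear apparatus of Section~\ref{The linear time evolution} persists at each level $k$: the operators $\mb L'$, $\mb\Gamma$, $\mb\Gamma^{-1}$, and the compact operator $\mb V$ are bounded on $\mc H^k$ with no net loss of derivatives; the spectral analysis of Lemma~\ref{point spectrum and mode solutions} and Proposition~\ref{Le:Spectum_Struct} is insensitive to $k$ because eigenfunctions are smooth; and the decay estimate \eqref{Bound_S0_intro} --- hence \eqref{decay on stable subspace} and Theorem~\ref{linear stability} --- holds on $\mc H^k$ by the very construction of the generalized higher-order inner product (cf.\ \cite{GS21}) behind Proposition~\ref{free evolution}. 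Likewise, the Moser-type estimates underlying Propositions~\ref{N1 lipschitz bound}--\ref{N3 lipschitz bound} (Lemma~2.13 of \cite{BDS19}) furnish, for functions small in $\mc H^5$, the \emph{tame} bound $\|\mathbf N(\mathbf u)-\mathbf N(\mathbf v)\|_{\mc H^k}\lesssim_k(\|\mathbf u\|_{\mc H^5}+\|\mathbf v\|_{\mc H^5})\big(\|\mathbf u\|_{\mc H^k}+\|\mathbf v\|_{\mc H^k}+1\big)\|\mathbf u-\mathbf v\|_{\mc H^k}$, in which the small $\mc H^5$-norm is the smallness parameter and the $\mc H^k$-norm enters only linearly.

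With these ingredients I would repeat the Lyapunov--Perron fixed point argument of Proposition~\ref{mod wp} at level $k$ (with the correction term exactly as there), now on the set $\{\Phi\in C([0,\infty),\mc H^k):\ \sup_{\tau\geq 0}e^{\omega\tau}\|\Phi(\tau)\|_{\mc H^5}\leq\delta,\ \sup_{\tau\geq 0}\|\Phi(\tau)\|_{\mc H^k}\leq R_k\}$, where $R_k$ is a suitable constant multiple of $\|\Phi_0(\mb v,T)\|_{\mc H^k}$. Because the $\mc H^5$-component decays like $e^{-\omega\tau}$, the tame bound gives $\|\mathbf N(\Phi(s))\|_{\mc H^k}\lesssim_k\delta e^{-\omega s}\|\Phi(s)\|_{\mc H^k}$, so that the Duhamel integral and the correction term are controlled in $\mc H^k$ by a quantity of the form $C e^{-\omega\tau}\|\Phi_0(\mb v,T)\|_{\mc H^k}+C\delta R_k$; this is exactly the structure that closed the $\mc H^5$ argument, the sole difference being that the inhomogeneity is now merely bounded rather than small. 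Shrinking $\delta_0$ if necessary makes the corresponding map a self-map and a contraction on this set, and the unconditional uniqueness already available in $\mc H^5$ forces its fixed point to coincide with the solution $\Phi$ of Lemma~\ref{correction is zero}. Iterating over $k$ yields $\Phi\in C([0,\infty),\mc H^k)$ for every $k\geq 5$, hence $\Phi(\tau)\in\bigcap_{k\geq 5}\mc H^k\hookrightarrow C^\infty_\text{rad}(\overline{\mathbb B^7})\times C^\infty_\text{rad}(\overline{\mathbb B^7})$ for each fixed $\tau$, by the embeddings $H^k(\mathbb B^7)\hookrightarrow C^{k-4}(\overline{\mathbb B^7})$.

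It remains to obtain joint smoothness in $(\tau,\xi)$ and to identify $\Phi$ as a classical solution of \eqref{abstract ivp}. Since $\Phi_0(\mb v,T)$ is smooth, it lies in $\mc D(\tilde{\mb L}_0)\subset\mc D(\mb L)$, and $\mathbf N$ is $C^\infty$ from a small $\mc H^k$-ball into $\mc H^k$ by the algebra property of $H^j(\mathbb B^7)$, $j\geq 4$; the standard semigroup regularity theory (see, e.g., \cite{EN00}, or the mild-implies-classical theorem of Pazy) then upgrades the mild solution $\Phi$ of \eqref{duhamel with id operator} to a classical solution in $\mc H^k$, i.e.\ $\Phi\in C^1([0,\infty),\mc H^k)$ with $\Phi(\tau)\in\mc D(\mb L)$ and $\partial_\tau\Phi=\mathbf L\Phi+\mathbf N(\Phi)$. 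Differentiating this identity, $v:=\partial_\tau\Phi$ satisfies the linear nonautonomous equation $\partial_\tau v=\mathbf L v+D\mathbf N(\Phi(\tau))v$ with smooth initial value $v(0)=\mathbf L\Phi_0(\mb v,T)+\mathbf N(\Phi_0(\mb v,T))\in\mc D(\mb L)$ and a coefficient $\tau\mapsto D\mathbf N(\Phi(\tau))$ that is itself $C^1$, hence is again a classical solution; iterating gives $\Phi\in C^j([0,\infty),\mc H^k)$ for all $j,k$. Combining with the spatial embeddings above, all mixed derivatives $\partial_\tau^j\partial_\xi^\alpha\Phi$ exist and are continuous on $[0,\infty)\times\overline{\mathbb B^7}$, so $\Phi\in C^\infty([0,\infty)\times\mathbb B^7)\times C^\infty([0,\infty)\times\mathbb B^7)$; and, by the step just described, it solves \eqref{abstract ivp} classically.

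I expect the second step to be the main obstacle: controlling the higher Sobolev norms $\|\Phi(\tau)\|_{\mc H^k}$ \emph{uniformly in $\tau\in[0,\infty)$} even though the data is not small in $\mc H^k$ for $k>6$. This is exactly why one cannot simply reinvoke Proposition~\ref{mod wp} at level $k$ and must instead exploit the decay of the $\mc H^5$-norm through the tame structure of $\mathbf N$; keeping the top-order norm linear in the Moser estimates is what prevents a derivative loss and lets the bootstrap close at every level.
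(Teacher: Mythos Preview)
Your approach is essentially correct but considerably more elaborate than the paper's. The paper does \emph{not} rerun the Lyapunov--Perron argument at level $k$, nor does it seek uniform-in-$\tau$ control of $\|\Phi(\tau)\|_{\mc H^k}$. Instead it runs a standard \emph{local} fixed point argument in $\mc H^k$ (using only that $\mb N$ is locally Lipschitz on $\mc H^k$ and that $\mb S$ extends to a semigroup on $\mc H^k$ via Appendix~\ref{The Free Wave Evolution} and the bounded perturbation theorem), identifies the resulting local solution with the global $\mc H^5$-solution by uniqueness, and then continues it globally by Gr\"onwall: from \eqref{duhamel with id operator} one gets $\|\Phi(\tau)\|_{\mc H^k}\lesssim_k 1+\int_0^\tau\|\Phi(s)\|_{\mc H^k}\,ds$, which rules out finite-time blowup of the $\mc H^k$-norm. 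The $\mc H^k$-norm is allowed to grow exponentially in $\tau$; this is harmless for spatial smoothness. Temporal regularity is then obtained exactly as you outline, by invoking a mild-implies-classical result (Proposition~4.3.9 of \cite{CH99}) and Schwarz' theorem to swap $\tau$- and $\xi$-derivatives.

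Your route would buy decay of $\|\Phi(\tau)\|_{\mc H^k}$ (stronger than needed), but at the cost of reproving the full linear machinery---spectral projection, resolvent bounds, stable-subspace decay---on every $\mc H^k$, and of managing the $k$-dependence of the smallness threshold in the contraction (your ``shrinking $\delta_0$ if necessary'' is delicate, since $\delta_0$ is fixed by the data and must serve all $k$ simultaneously). The paper sidesteps both issues: it needs only the crude exponential bound on $\mb S$ in $\mc H^k$, and Gr\"onwall happily tolerates $k$-dependent constants. In short, what you flag as ``the main obstacle''---uniform $\mc H^k$ control for large data---is simply not an obstacle here, because global existence in $\mc H^k$ does not require it.
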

			\begin{proof}
				Denote by $T$ the unique parameter obtained in Lemma \ref{correction is zero} and observe that $\Phi_0(\mb v,T)\in\mathcal H^k$ for all $k\in\mathbb N$. According to Proposition \ref{locally lipschitz nonlinearity estimate}, for each $k\in\mathbb N$, $k\geq5$ and any $\d\in(0,\d_0]$, $\mathbf N:\mathcal B^k_\d\to\mathcal H^k$ is locally Lipschitz. Thus, a standard fixed point argument then yields a local solution of Equation \eqref{duhamel with id operator} in $\mathcal H^k$ for each such $k$. By uniqueness, these solutions are precisely the global solution of Equation \eqref{duhamel with id operator} in $\mathcal H$ from Lemma \ref{correction is zero} on their interval of existence. We claim that these solutions are in fact global solutions in $\mathcal H^k$. Denote by $\mc T_k>0$ the lifespan of the solution $\Phi$ in $\mc H^k$, i.e., we have $\Phi\in C([0,\mc T_k],\mc H^k)$. From Equation \eqref{duhamel with id operator} it follows that
					$$
						\|\Phi(\t)\|_{\mathcal H^k}\lesssim_k1+\int_0^\t\|\Phi(s)\|_{\mathcal H^k}ds
					$$
				for all $\t\in[0,\mc T_k]$. Gr\"onwall's inequality then implies $\|\Phi(\t)\|_{\mc H^k}\leq C_1e^{C_2\mc T_k}$ for all $\t\in[0,\mc T_k]$ and for some $C_1,C_2>0$. Thus, by standard continuation criteria (see, e.g. Theorem 4.3.4 on p. 57 of \cite{CH99}), it must hold that $\mc T_k=\infty$. Furthermore, Sobolev embedding yields $\Phi(\t)\in C^\infty(\mathbb B^7)\times C^\infty(\mathbb B^7)$ for all $\t\geq0$.
				
				To prove regularity in $\t$, we first note that  $\Phi_0(\mb v,T)\in\mathcal D(\mathbf L_0)$. Thus, for each fixed $\ell\in\mathbb N$ with $\ell\geq5$, Proposition 4.3.9 on p. 60 of \cite{CH99} implies that the global solution $\Phi$ of Equation \eqref{duhamel with id operator} is a classical solution, i.e., 
					$$
						\Phi\in C([0,\infty),\mathcal D(\mathbf L_0))\cap C^1([0,\infty),\mathcal H^{\ell})
					$$
				and solves
					\begin{equation}
						\partial_\t\Phi(\t)=\mathbf L\Phi(\t)+\mathbf N(\Phi(\t)) \label{classical eqn_}
					\end{equation}
				for $\t\geq0$ in $\mathcal H^{\ell}$. In fact, by the embedding $\mathcal H\hookrightarrow L^\infty(\mathbb B^7)\times L^\infty(\mathbb B^7)$, we have that Equation \eqref{classical eqn_} holds pointwise. Furthermore, since $\Phi(\t)\in\mathcal H^k$ for all $k\geq\ell$, we in fact have that $\mathbf L$ acts classically on $\Phi(\t)$, i.e., $\mathbf L\Phi(\t)=\tilde{\mathbf L}\Phi(\t)$. As a consequence, it follows that
					$$
						\partial_\t\Phi(\t)=\tilde{\mathbf L}\Phi(\t)+\mathbf N(\Phi(\t)). \label{classical eqn}
					$$
				By the embedding $\mathcal H^{\ell}\hookrightarrow L^\infty(\mathbb B^7)\times L^\infty(\mathbb B^7)$, the $\t$-derivative holds pointwise. Finally, by a generalized version of Schwarz' theorem (see e.g. Theorem 9.41 on p. 235 of \cite{R87}), we can exchange $\t$-derivatives and $\xi$-derivatives upon which the claim follows.
			\end{proof}

	\subsection{Proof of the main result}\label{Proof of the main result}
		\begin{proof}[Proof of Theorem \ref{Th:Reform}]
			Choose $\d,c>0$ as in Lemma \ref{correction is zero} and set $\d':=\frac{\d}{c}$. Furthermore, let $(f,g)\in C_\text{rad}^\infty(\mathbb B^7_2)\times C_\text{rad}^\infty(\mathbb B^7_2)$ satisfy
				$$
					\big\|\big(f,g\big)\big\|_{H^6(\mathbb B^7_2)\times H^5(\mathbb B^7_2)}\leq\frac{\d'}{c}.
				$$
			Then $\mathbf v:=(f,g)\in C^\infty_\text{rad}(\overline{\mathbb B^7_{2}})\times C^\infty_\text{rad}(\overline{\mathbb B^7_{2}})$ satisfies the hypotheses of Lemma \ref{correction is zero} and Proposition \ref{classical soln}. Thus, there is a unique $T\in[1-\d',1+\d']$ depending Lipschitz continuously on $(f,g)$ so that Equation \eqref{duhamel with id operator} has the unique classical solution $\Phi=(\varphi_1,\varphi_2)\in C^\infty([0,\infty)\times\mathbb B^7)\times C^\infty([0,\infty)\times\mathbb B^7)$ with $\Phi\in\mathcal X_{\d'}$. Now, set
				\begin{align*}
					u(t,r):=&\frac{1}{T-t}\bigg[\tilde U\Big(\frac{r}{T-t}\Big)+\varphi\Big(t,\frac{r}{T-t}\Big)\bigg].
				\end{align*}
			with
				$$
					\varphi\Big(t,\frac{r}{T-t}\Big):=\varphi_1\bigg(\log\Big(\frac{T}{T-t}\Big),\frac{r}{T-t}\bigg).
				$$
			By Proposition \ref{classical soln} and the fact that similarity coordinates define a diffeomorphism of the backwards light cone into the infinite cylinder, we have that $u\in C_\text{rad}^\infty(\mathfrak C_T)$. Furthermore, according to Proposition \ref{correction is zero} and the calculations carried out in Section \ref{First-order formulation}, $u$ is indeed the unique solution of Equation \eqref{rescaled semilinear sf skyrme eqn} on $\mathfrak C_T$ satisfying the initial conditions
				$$
					u(0,r)=u^1(0,r)+f(r)
				$$
			and
				$$
					\partial_t u(0,r)=\partial_t u^1(0,r)+g(r).
				$$
			The estimate \eqref{convergence} follows from $\Phi\in\mathcal X_\d'$.
%
		\end{proof}

\appendix	
	\section{Derivation of the equation} \label{Derivation of the Equation}
		Here, we carry out the calculations leading to Equation \eqref{skyrme eom}. Consider the $(1+d)$-dimensional Minkowski space $(\R^{1+d},\eta)$, the $d$-sphere $(\mathbb S^d,h)$, and smooth maps $U:\mathbb R^{1+d}\to\mathbb S^d$. On the domain, we use the coordinates $(x^\mu)_{\mu=0}^d$ with $x^0=t$ and the remaining spatial coordinates we leave unspecified for the moment. On the target, we utilize coordinates $(\Omega^a)_{a=0}^{d-1}=(\psi,\Omega)$ where $\psi$ denotes a particular polar angle and $\Omega=(\Omega^1,\Omega^2,\dots,\Omega^{d-1})$ denotes the remaining angles on $\mathbb S^{d-1}$. 
		We express the metrics as
			$$
				\eta=\eta_{\mu\nu}dx^\mu dx^\nu=-dt^2+\eta_{ij}dx^idx^j
			$$
		and
			$$
				h=h_{ab}d\Omega^ad\Omega^b=d\psi^2+\sin^2(\psi)d\Omega^2
			$$
		with $d\Omega^2$ denoting the standard round metric on $\mathbb S^{d-1}$. From this data, we consider the symmetric $(0,2)$-tensor on $\R^{1+d}$ given by the pullback of $h$ via $U$ and denote it by $U^*h$. Composing this quantity with the inverse Minkowski metric, $\eta^{-1}\circ U^*h$, defines a smoothly-varying linear transformation on each tangent space in Minkowski space. Symmetric polynomials of its eigenvalues define smoothly-varying functions on spacetime which are invariant under the symmetry group of $\eta$. To that end, we denote by
			$$
				\sigma_1(U)=\tr_\eta(U^*h)
			$$
		the first symmetric polynomial of the eigenvalues of $\eta^{-1}\circ U^*h$ and by
			$$
				\sigma_2(U)=\tr_\eta(U^*h)^2-\tr_\eta\big((U^*h)^2\big)
			$$
		the second symmetric polynomial of the eigenvalues of $\eta^{-1}\circ U^*h$. In coordinates, these quantities take the form
			$$
				\sigma_1(U)=\eta^{\mu\nu}h(U)_{ab}\partial_\mu U^a\partial_\nu U^b
			$$
		and
			$$
				\sigma_2(U)=\big(\eta^{\mu\nu}h(U)_{ab}\partial_\mu U^a\partial_\nu U^b\big)^2-\eta^{\mu\rho}\eta^{\nu\sigma}h(U)_{ab}h(U)_{cd}\partial_\rho U^a\partial_\sigma U^b\partial_\mu U^c\partial_\nu U^d
			$$
		where $\eta^{\mu\nu}$ denotes the components of $\eta^{-1}$ in the coordinates $(x^\mu)_{\mu=0}^d$. Being Lorentz-invariant quantities depending on the map $U$, linear combinations of these quantities form candidates for Lagrangians of geometric field theories. For $\alpha,\beta\geq0$, consider the action
			\begin{equation}
				\mc S_{Sky}[U]:=\int_{\R^{1+d}}\Big(\frac{\alpha}{2}\sigma_1(U)+\frac{\beta}{4}\sigma_2(U)\Big)d\eta. \label{skyrme model}
			\end{equation}
		Observe that this is precisely the Skyrme model as described in Section \ref{Introduction}. The case $\beta=0$ yields wave maps into the sphere while the case $\alpha=0$ yields the strong field Skyrme model.
		
		We restrict our attention to co-rotational maps. To that end, we put spherical coordinates on the domain, i.e., we set $(x^i)_{i=1}^d=(r,\omega)$ where $\omega=(\omega^1,\dots,\omega^{d-1})$ denotes an angle on $\mathbb S^{d-1}$. In these coordinates, the Minkowski metric takes the form
			$$
				\eta=-dt^2+dr^2+r^2d\omega^2
			$$ 
		with $d\omega^2$ denoting the standard round metric on $\mathbb S^{d-1}\subset\mathbb R^{d}$. Furthermore, we only consider those $U:\mathbb R^{1+d}\to\mathbb S^d$ of the form
			$$
				U(t,r,\omega)=\big(\psi(t,r),\omega\big)
			$$
		for some function $\psi:\mathbb R\times[0,\infty)\to\mathbb R$. The action \eqref{skyrme model} reduces to 
			$$
				\mc S_{Sk}[U]=\int_{-\infty}^\infty\int_0^\infty\mathcal L[\psi](t,r)dtdr
			$$
		with Lagrangian density
			\begin{align*}
				\mathcal L[\psi](t,r):=C_dr^{d-1}\bigg[\Big(\alpha^2&+\frac{(d-1)\beta^2\sin ^2(\psi)}{r^2}\Big)\Big(-\big(\partial_t\psi\big)^2+\big(\partial_r\psi\big)^2\Big)
				\\
				&+\Big(\alpha^2+\frac{(d-2)\beta^2\sin^2(\psi)}{2r^2}\Big)\frac{4 \sin ^2(\psi)}{r^2}\bigg]
			\end{align*}
		where $C_d>0$ is a constant depending on the dimension coming from the angular portion of the action which plays no crucial role. Critical points formally solve the Euler-Lagrange equation
			$$
				\partial_t\frac{\partial\mathcal L[\psi]}{\partial(\partial_t\psi)}+\partial_r\frac{\partial\mathcal L[\psi]}{\partial(\partial_r\psi)}-\frac{\partial\mathcal L[\psi]}{\partial\psi}=0
			$$
		which takes the form
			\begin{align*}
				\Big(\alpha^2&+\frac{\beta^2(d-1)\sin^2(\psi)}{r^2}\Big)\big(\partial_t^2\psi-\partial_r^2\psi\big)-\frac{d-1}{r}\Big(\alpha^2+\frac{\beta^2(d-3)\sin^2(\psi)}{r^2}\Big)\partial_r\psi
				\\
				&+\frac{(d-1)\sin(2\psi)}{2r^2}\bigg(\alpha^2+\beta^2\Big(\big(\partial_t\psi\big)^2-\big(\partial_r\psi\big)^2+\frac{(d-2)\sin^2(\psi)}{r^2}\Big)\bigg)=0.
			\end{align*}
		Setting $\alpha=0$ and $\beta=1$ yields Equation \eqref{sf skyrme eom}.
		
\section{Proof of proposition \ref{free evolution}}\label{The Free Wave Evolution}

We prove a more general result on the spaces $\mc H^k$ for the purpose of Proposition \ref{classical soln} where certain restriction properties of the semigroup is needed. Proposition \ref{free evolution} then follows by setting $k=5$.

\begin{proposition}
Let $k \geq 3$. The operator $\tilde{\mathbf{L}}_0: \mc D(\tilde{\mathbf{L}}_0) \subset \mc H^k \to \mc H^k$ is closable and its closure $\tilde{\mathbf{L}}_{0,k}: \mc D(\tilde{\mathbf{L}}_{0,k}) \subset \mc H^k \to \mc H^k$ generates a semigroup $(\mb S_{0,k}(\tau))_{\tau \geq 0}$ which satisfies 
\[ \|\mb S_{0,k}(\tau) \mb u \|_{\mc H^{k}} \leq M_k  e^{-\frac{1}{2} \tau} \| \mb u \|_{\mc H^{k}}  \]
for all $\tau \geq 0$ and all $\mb u \in \mc H^k$. Moreover, for any $j \in \N$, the semigroup $(\mb S_{0,k+j}(\tau))_{\tau \geq 0}$ is the restriction of $(\mb S_{0,k}(\tau))_{\tau \geq 0}$ to $\mc H^{k+j}_{\text{rad}}$.
\end{proposition}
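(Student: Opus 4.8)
The plan is to realize $\tilde{\mb L}_0$, with domain $C^\infty_\text{rad}(\overline{\B^7})\times C^\infty_\text{rad}(\overline{\B^7})$, as a dissipative operator on $\mc H^k$ for a norm equivalent to the standard one, and then invoke the (generalized) Lumer--Phillips theorem; the restriction statement follows afterwards from uniqueness of classical solutions. It is convenient to note that, writing $\hat u_2=\partial_\tau\hat u_1+\rho\partial_\rho\hat u_1+\hat u_1$ in radial variables, the first component of the flow generated by $\tilde{\mb L}_0$ solves the damped self--similar wave equation $\partial_\tau^2\hat u_1+(2\rho\partial_\rho+5)\partial_\tau\hat u_1-(1-\rho^2)\partial_\rho^2\hat u_1-\tfrac6\rho(1-\rho^2)\partial_\rho\hat u_1+4\hat u_1=0$, i.e.\ a Legendre--type spatial operator with a scale invariant damping --- precisely the structure analysed in \cite{GS21}. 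Following that reference, I would equip $\mc H^k$ with an inner product $(\cdot,\cdot)_k$ built as a hierarchy of energy functionals: a top--order piece $\sum_{|\alpha|=k}\int_{\B^7}(\cdots)$ formed from the $\partial^\alpha$--derivatives of the two components with weights degenerating like $(1-|\xi|^2)$ near $\partial\B^7$, plus lower--order corrections whose coefficients are chosen inductively. The central computation --- a sequence of integrations by parts --- is the estimate $\Re(\tilde{\mb L}_0\mb u,\mb u)_k\le-\tfrac12\|\mb u\|_k^2$ for $\mb u\in C^\infty_\text{rad}(\overline{\B^7})\times C^\infty_\text{rad}(\overline{\B^7})$, where $\|\cdot\|_k$ is the induced norm and $c_k^{-1}\|\cdot\|_{\mc H^k}\le\|\cdot\|_k\le c_k\|\cdot\|_{\mc H^k}$; the constant $\tfrac12$ emerges from the interplay of the scaling weights in the two slots of $\tilde{\mb L}_0$, the damping, and the volume element $\rho^6\,d\rho$, and the boundary terms at $|\xi|=1$ are harmless since the rescaling field is outward--pointing and the weights vanish there. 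As a densely defined dissipative operator on a Hilbert space is closable, $\tilde{\mb L}_0$ is closable on $(\mc H^k,\|\cdot\|_k)$; call the closure $\tilde{\mb L}_{0,k}$, and note that $\tilde{\mb L}_{0,k}+\tfrac12$ is still dissipative.

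\textbf{Range condition and the semigroup.} It remains to verify that $\range(\lambda_0-\tilde{\mb L}_0)$ is dense in $\mc H^k$ for some $\lambda_0>-\tfrac12$; I would take $\lambda_0$ a large real number. For $\mb f=(f_1,f_2)\in C^\infty_\text{rad}(\overline{\B^7})\times C^\infty_\text{rad}(\overline{\B^7})$, solving $(\lambda_0-\tilde{\mb L}_0)\mb u=\mb f$ reduces --- using the first component to write $\hat u_2=\rho\hat u_1'+(\lambda_0+1)\hat u_1-\hat f_1$ --- to the single ODE $-(1-\rho^2)\hat u_1''-(\tfrac6\rho-2(\lambda_0+3)\rho)\hat u_1'+(\lambda_0+1)(\lambda_0+4)\hat u_1=\hat f_2+(\lambda_0+4)\hat f_1+\rho\hat f_1'$ on $(0,1)$, i.e.\ Equation \eqref{spectral equation} with $\tilde V\equiv0$ and a smooth right--hand side. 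This is a Frobenius problem with regular singular points at $\rho=0$ (indices $\{0,-5\}$) and $\rho=1$ (indices $\{0,1-\lambda_0\}$), and since $1-\lambda_0<0$, an analysis entirely parallel to those in Section \ref{Spectral analysis} --- variation of parameters against the fundamental system, using the explicit Wronskian $W(\rho)\propto\rho^{-6}(1-\rho^2)^{-\lambda_0}$, with the singular homogeneous contributions removed at each endpoint --- yields a solution $\hat u_1\in C_e^\infty[0,1]$, hence $\mb u\in C^\infty_\text{rad}(\overline{\B^7})\times C^\infty_\text{rad}(\overline{\B^7})\subset\mc D(\tilde{\mb L}_0)$. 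Density of $C^\infty_\text{rad}(\overline{\B^7})\times C^\infty_\text{rad}(\overline{\B^7})$ in $\mc H^k$ then gives the range condition, and the (generalized) Lumer--Phillips theorem (see \cite{EN00}) shows that $\tilde{\mb L}_{0,k}$ generates a semigroup $(\mb S_{0,k}(\tau))_{\tau\ge0}$ with $\|\mb S_{0,k}(\tau)\mb u\|_k\le e^{-\tau/2}\|\mb u\|_k$; the norm equivalence upgrades this to the stated bound with $M_k:=c_k^2$.

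\textbf{The restriction property.} Fix $j\in\N$. Since $\mc H^{k+j}\hookrightarrow\mc H^k$ continuously, any approximating sequence witnessing $\mb w\in\mc D(\tilde{\mb L}_{0,k+j})$ also witnesses $\mb w\in\mc D(\tilde{\mb L}_{0,k})$, so $\mc D(\tilde{\mb L}_{0,k+j})\subseteq\mc D(\tilde{\mb L}_{0,k})$ and the two operators agree there. Consequently, for $\mb u\in C^\infty_\text{rad}(\overline{\B^7})\times C^\infty_\text{rad}(\overline{\B^7})$, the curve $\tau\mapsto\mb S_{0,k+j}(\tau)\mb u$ --- which is $C^1$ into $\mc H^{k+j}$ and satisfies $\partial_\tau v=\tilde{\mb L}_{0,k+j}v$, $v(0)=\mb u$ --- is also a classical solution in $\mc H^k$ of $\partial_\tau v=\tilde{\mb L}_{0,k}v$, $v(0)=\mb u$; by uniqueness of classical solutions for a semigroup generator it coincides with $\mb S_{0,k}(\tau)\mb u$. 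As both semigroups are bounded and $C^\infty_\text{rad}(\overline{\B^7})\times C^\infty_\text{rad}(\overline{\B^7})$ is dense in $\mc H^{k+j}$, we conclude $\mb S_{0,k+j}(\tau)=\mb S_{0,k}(\tau)|_{\mc H^{k+j}_\text{rad}}$ for all $\tau\ge0$.

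\textbf{Main obstacle.} The crux is the dissipativity estimate of the first step: designing $(\cdot,\cdot)_k$ so that the constant $-\tfrac12$ survives at every differentiation order $k\ge3$. Commuting $\partial^\alpha$ through $\Delta$ and $\xi^j\partial_j$ produces borderline, top--order commutator terms that must be cancelled exactly by the weights and cross terms built into the inner product, which forces the level--by--level inductive construction and a delicate accounting of the boundary contributions at $|\xi|=1$ --- the place where positivity, and hence dissipativity, could be lost. Once this is in hand, the range condition (a routine ODE analysis of the type already performed in the paper) and the restriction property are straightforward.
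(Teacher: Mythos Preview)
Your overall strategy matches the paper's: Lumer--Phillips via an equivalent inner product taken from \cite{GS21}, a range condition established by an ODE analysis at a fixed $\lambda_0>-\tfrac12$, and the restriction property by uniqueness of classical solutions. Two implementation details differ and are worth knowing. First, the equivalent inner product in \cite{GS21} (and in the paper) is \emph{not} built from volume integrals with weights degenerating like $(1-|\xi|^2)$; the top orders $4\le j\le k$ are plain $\dot H^j\times\dot H^{j-1}$ inner products, and the crucial modifications occur at the bottom three levels, where explicit \emph{boundary} integrals over $\mathbb S^6$ are added. The paper then splits $\tilde{\mb L}_0=\tilde{\mb L}_W+\tilde{\mb L}_D$ into the free--wave part and the damping $(0,-2u_2)$, and simply quotes the dissipativity of $\tilde{\mb L}_W$ from \cite{GS21,CGS21} (yielding $-\tfrac12$ at levels $1,2,3$ and $\tfrac52-j\le-\tfrac12$ at higher levels), with $\tilde{\mb L}_D$ trivially dissipative; this sidesteps the ``delicate accounting of boundary contributions'' you anticipated. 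Second, for the range condition the paper fixes the concrete value $\lambda_0=\tfrac32$, writes down an explicit fundamental system of the homogeneous ODE, and proves $u_1\in C^\infty_{\text{rad}}(\overline{\B^7})$ by a change of variables near $\rho=1$ together with elliptic regularity near $\xi=0$ --- more hands--on than your generic large--$\lambda_0$ Frobenius argument, but to the same effect.
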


\begin{proof}
We apply the Lumer-Phillips theorem which necessitates a suitable dissipative bound. For this, we follow the standard procedure and use an equivalent, but better behaved, inner product on $\mc H^k$ instead. Following \cite{GS21} we define for $k \geq 3$ on $C^k_{\text{rad}}(\overline{\B^7}) \times C^{k-1}_{\text{rad}}(\overline{\B^7})$  
\begin{align*}
(\mb u| \mb v)_1 & :=  4 \int_{\B^7} \partial_i \partial_j \partial_k u_1(\xi) \overline{\partial^i \partial^j \partial^k v_1(\xi)} d\xi
+  4 \int_{\B^7}  \partial_i \partial_j  u_2(\xi) \overline{\partial^i \partial^j v_2(\xi) } d\xi 
\\
& +  4 \int_{\mathbb S^6}  \partial_i \partial_j  u_1(\omega)\overline{ \partial^i \partial^j v_1(\omega) }d\sigma(\omega), \\
(\mb u| \mb v)_2 & := \int_{\B^7} \partial_i \Delta u_1(\xi) \overline{\partial^i  \Delta  v_1(\xi)} d\xi
+ \int_{\B^7}  \partial_i \partial_j  u_2(\xi) \overline{\partial^i \partial^j v_2} (\xi) d\xi 
+ \int_{\mathbb S^6}  \partial_i u_2(\omega)\overline{ \partial^i  v_2(\omega) } d\sigma(\omega), \\
(\mb u| \mb v)_3  & := \int_{\mathbb S^6}  \partial_i u_1(\omega) \overline{ \partial^i  v_1(\omega)} d\sigma(\omega) + \int_{\mathbb S^6}   u_1(\omega)  \overline{v_1(\omega)} d\sigma(\omega) 
+ \int_{\mathbb S^6}  u_2(\omega)  \overline{v_2(\omega) }d\sigma(\omega).
\end{align*}
Furthermore, for $4 \leq j \leq k$,  we use the standard $\dot{H}^j(\mathbb B^7) \times\dot{H}^{j-1}(\mathbb B^7)$ inner products and define
\[ (\mathbf u|\mathbf u)_{\mc E^k} := \sum_{j=1}^{k} (\mathbf u|\mathbf u)_{j} \]
and set  $\|\mathbf u\|_{\mc E^k} :=\sqrt{(\mathbf u|\mathbf u)_{\mc E^k}}$. Using Lemma 3.1 of \cite{GS21}, it follows that
 \[ \|\mathbf u\|_{\mc E^k} \simeq \|\mathbf u\|_{\mathcal H^k}\] for all $\mathbf u \in C^k(\overline{\mathbb B^7}) \times  C^{k-1}(\overline{\mathbb B^7})$. Consequently, this holds in particular on $ C^k_\text{rad}(\overline{\mathbb B^7}) \times  C^{k-1}_\text{rad}(\overline{\mathbb B^7})$. By density,  $\|\cdot \|_{\mc E^k}$ defines an equivalent norm on $\mc H^k$.

We write $\tilde{\mathbf L}_0 = \tilde{\mathbf L}_W +\tilde{\mathbf L}_{D}$, where $\tilde{\mathbf L}_W$ is the standard wave evolution in similarity coordinates as defined in \cite{GS21}, Eq. $(1.14)$  and 
	$$
		\tilde{\mathbf L}_{D} \mb u = 
			\begin{pmatrix}
				0 
				\\
				- 2 u_2
			\end{pmatrix}.
	$$ 
By Lemma $3.2$ in \cite{CGS21} (modulo notation) we have 
\[ 		\mathrm{Re} \sum_{j=1}^{3} (\tilde{\mathbf L}_W \mathbf{u}|\mathbf{u})_{j} \leq-\tfrac{1}{2} \sum_{j=1}^{3}  ( \mathbf{u}|\mathbf{u})_{j} . \]
Furthermore, by emulating the computation in the proof of Lemma 3.3 in \cite{CGS21}, Appendix A, one obtains for $4 \leq j \leq k$ the bound
\begin{align*}
	\mathrm{Re}  (\tilde{\mathbf L}_W \mathbf{u}|\mathbf{u})_{j} \leq (\tfrac{5}{2} - k) ( \mathbf{u}|\mathbf{u})_{j} . 
\end{align*}
Obviously, $\Re(\tilde{\mathbf L}_{D} \mathbf{u}|\mathbf{u})_{\mc E^k} \leq 0$, which implies the dissipative estimate
					$$
						\Re(\tilde{\mathbf L}_0 \mathbf{u}|\mathbf{u})_{\mc E^k} \leq-\tfrac{1}{2}\|\mathbf{u}\|_{\mc E^k}^2
					$$
				for all $\mathbf u\in\mathcal D(\tilde{\mathbf L}_0)$.

Next, we prove that set $\range(\frac{3}{2}-\tilde{\mathbf L}_0)$ is dense in $\mathcal H^{k}_{\text{rad}}$. Let $\mathbf f\in C^\infty_\text{rad}(\overline{\mathbb B^7}) \times C^\infty_\text{rad}(\overline{\mathbb B^7})$. We will show that the equation
\[
						\big(\tfrac{3}{2}-\tilde{\mathbf L}_0\big)\mathbf u=\mathbf f
\]
				is solvable with $\mathbf u=(u_1,u_2)\in\mathcal D(\tilde{\mathbf L}_0)$. In terms of radial representatives this, equation is equivalent to the system of ODEs
\[
						\begin{cases}
							\frac{5}{2}\hat u_1(\r)+\r\hat u_1'(\r)-\hat u_2(\r)=\hat f_1(\r)
							\\
							\frac{11}{2}\hat u_2(\r)-\hat u_1''(\r)-\frac{6}{\r}\hat u_1'(\r)+\r\hat u_2'(\r)=\hat f_2(\r)
						\end{cases}
\]
				for $\r\in(0,1)$. Using the first equation to solve for $\hat u_2$, we see that solving this system of ODEs  reduces to
					\begin{equation}
						(1-\r^2)\hat u_1''(\r)+\Big (\tfrac{6}{\r}-9\r\Big)\hat u_1'(\r)-\tfrac{55}{4}\hat u_1(\r)=g(\r) \label{dense range ODE}
					\end{equation}
				for $\r\in(0,1)$ and where
\[
						g(\r):=-\hat f_2(\r)-\r\hat f_1'(\r)-\tfrac{11}{2}\hat f_1(\r).
\]
				Observe that the homogeneous equation has Frobenius indices $\{0,-5\}$ at $\r=0$ and $\{0,-\frac{1}{2}\}$ at $\r=1$. In fact, an explicit fundamental system for the homogenous equation is given by
					$$
						u_{1,1}(\r):=\r^{-5}(1+\r)^{-\frac{1}{2}}\big(12+6\r+\r^2+2\r^3\big)
					$$
				and
					$$
						u_{1,2}(\r):=\r^{-5}(1-\r)^{-\frac{1}{2}}\big(12-6\r+\r^2-2\r^3\big)
					$$
				with Wronskian
					$$
						W\big(u_{1,1},u_{1,2}\big)(\r)=105\r^{-6}\big(1-\r^2\big)^{-\frac{3}{2}}.
					$$
				Observe that while $u_{1,1}$ takes the index $0$ at $\r=1$, both solutions take the index $-5$ at $\r=0$. In order to solve the inhomogeneous equation, we define a third solution
					$$
						u_{1,0}(\r):=u_{1,1}(\r)-u_{1,2}(\r).
					$$
				Direct calculation shows that this solution takes the index $0$ at $\r=0$. A particular solution of Equation \eqref{dense range ODE} is given by
					\begin{align*}
						\hat u_1(\r)=&-u_{1,0}(\r)\int_\r^1\frac{u_{1,1}(s)}{W\big(u_{1,0},u_{1,1}\big)(s)}\frac{g(s)}{1-s^2}ds-u_{1,1}(\r)\int_0^\r\frac{u_{1,0}(s)}{W\big(u_{1,0},u_{1,1}\big)(s)}\frac{g(s)}{1-s^2}ds
						\\
						=&-u_{1,0}(\r)\int_\r^1u_{1,1}(s)\sqrt{1-s}g_1(s)ds-u_{1,1}(\r)\int_0^\r u_{1,0}(s)\sqrt{1-s}g_1(s)ds
					\end{align*}
				where
\[
g_1(s):=\tfrac{1}{105}s^6\sqrt{1+s}g(s).
\]

By direct calculation, we see that $\hat u_1\in C^\infty(0,1)$. We claim that in fact we have $\hat u_1\in C_e^\infty[0,1]$, i.e., $u_1\in C^\infty_\text{rad}(\overline{\mathbb B^7})$. To verify this claim, we first show that $\hat u_1\in C^\infty(0,1]$. Observe that the second integral converges as $\r\to1^-$ and we call its value $\alpha$. Thus, after inserting the definition of $u_{1,0}(\r)$ in terms of the two other solutions, we obtain an equivalent expression for $\hat u_1(\r)$
					$$
						\hat u_1(\r)=\frac{\tilde u_{1,2}(\r)}{\sqrt{1-\r}}\int_\r^1u_{1,1}(s)\sqrt{1-s}g_1(s)ds-\alpha u_{1,1}(\r)-u_{1,1}(\r)\int_\r^1\tilde u_{1,2}(s)g_1(s)ds
					$$
				where
					$$
						\tilde u_{1,2}(\r):=\r^{-5}\big(12-6\r+\r^2-2\r^3\big).
					$$
				Now, the second and third terms are clearly smooth at $\r=1$. For the first term, we make the substitution $s=\r+(1-\r)t$ to obtain the equivalent form
					$$
						\tilde u_{1,2}(\r)(1-\r)\int_0^1u_{1,1}\big(\r+(1-\r)t\big)g_1\big(\r+(1-\r)t\big)\sqrt{1-t}dt
					$$
				for $\r>0$ from which smoothness at $\r=1$ follows.
				
				Now, we show that  $u_1 \in C_\text{rad}^\infty(\overline{\mathbb B^7})$ for $u_1(\xi) = \hat u_1(|\xi|)$  We first note that our analysis so far shows that $u_1  \in C^\infty(\overline{\mathbb B^7}\setminus\{0\})$ and solves the PDE
					\begin{equation}
- (\delta^{ij} - \xi^{i}\xi^{j} ) \partial_i \partial_j u_1(\xi) - 9 \xi^{i} \partial_j u_1(\xi) =g(|\xi|) \label{dense range PDE}
					\end{equation}
				for $\xi\in\overline{\mathbb B_1^7}\setminus\{0\}$. Furthermore, direct calculations show that $\hat u_1(\r)=O(1)$ and $\hat u_1'(\r)=O(\r)$ for $\r$ near $0$. Thus, $u_1\in H^1(\mathbb B^7)$ and, consequently, $u_1$ is a weak solution of Equation \eqref{dense range PDE} on $\mathbb B^7$. By elliptic regularity, we infer that $u_1\in C^\infty_\text{rad}(\overline{\mathbb B^7})$. An application of the Lumer-Phillips Theorem now implies the first part of the claim. The proof of  second statement about the restriction properties is the same as in Lemma $3.5$ of \cite{CGS21}.
			\end{proof}

\bibliographystyle{plain}
\bibliography{bibfile}

\end{document}